\documentclass{amsart}

\usepackage{graphicx} 
\usepackage{amsfonts, amsmath,amssymb,amsthm}
\usepackage{thmtools}
\usepackage{xcolor}
\usepackage[shortlabels]{enumitem}
\usepackage[hypertexnames=false]{hyperref}
\usepackage{cleveref}
\AddToHook{cmd/appendix/before}{\crefalias{section}{appendix}}

\crefname{enumi}{}{}

\usepackage{tikz}
\usepackage{graphicx}
\usepackage[normalem]{ulem}
\usepackage{fontawesome5}
\usepackage{mathrsfs}
\usepackage{longtable}
\usepackage{array}

\crefformat{enumi}{(\mathrm{#2#1#3})}
\Crefformat{enumi}{(\mathrm{#2#1#3})}
\crefname{enumi}{}{ }

\usepackage[margin=1 in]{geometry}

\newcommand{\rk}{\mathrm{rk}}

\newcommand{\dual}[1]{#1^{\vee}}
\newcommand{\GHV}{the GHV lower bound}

\newcommand{\KK}{\mathbb{K}}

\newcommand{\RR}{\mathbb{R}}

\newcommand{\ZZ}{\mathbb{Z}}
\newcommand{\NN}{\mathbb{N}}

\newcommand{\cB}{\mathcal{B}}
\newcommand{\C}{{\mathcal{C}}}

\newcommand{\cI}{\mathcal{I}}

\newcommand{\cL}{\mathcal{L}}

\newcommand{\conv}{\mathrm{conv}}
\newcommand{\cone}{\mathrm{cone}}
\newcommand{\NP}{\mathrm{NP}}
\newcommand{\SP}{\mathrm{SP}}

\newcommand{\loops}{\mathrm{loops}}

\theoremstyle{plain}
\newtheorem{Theorem}{Theorem}[section]
\newtheorem{Lemma}[Theorem]{Lemma}
\newtheorem{Proposition}[Theorem]{Proposition}

\newtheorem{Corollary}[Theorem]{Corollary}
\newtheorem{Conjecture}[Theorem]{Conjecture}

\newtheorem*{Main1}{{\Cref{thm:asym-res-facet}}}
\newtheorem*{Main2}{{\Cref{thm:asym-order}}}
\newtheorem*{Main3}{{\Cref{thm:relaxation-to-uniform}}}
\newtheorem*{Main4}{\Cref{thm:asym-res-dual-paving}}
\newtheorem*{Main5}{\Cref{thm:asym-res-PMD}}
\newtheorem*{Main6}{\Cref{thm:asym-res-Steiner-systems}}
\theoremstyle{definition}
\newtheorem{Definition}[Theorem]{Definition}
\newtheorem{Example}[Theorem]{Example}
\newtheorem{Question}[Theorem]{Question}

\newtheorem{Remark}[Theorem]{Remark}

\providecommand{\customgenericname}{}
\newtheorem{innercustomgeneric}{\customgenericname}
\providecommand{\customgenericname}{}
\newcommand{\newcustomtheorem}[2]{%
  \newenvironment{#1}[1]
  {%
   \renewcommand\customgenericname{#2}%
   \renewcommand\theinnercustomgeneric{##1}%
   \innercustomgeneric
  }
  {\endinnercustomgeneric}
}
\def\urltilda{\kern -.15em\lower .7ex\hbox{\~{}}\kern .04em} 
\newcustomtheorem{customthm}{Theorem}
\newcustomtheorem{acknowledgement}{Acknowledgement}

\newcommand{\cir}{\mathcal{C}}

\begin{document}

\title{Asymptotic Resurgence of Facet and Stanley-Reisner ideals of Matroids }



\subjclass[2020] { 
13A15, 
13F55, 
05E40, 
05B35, 
05B05, 
51E10. 
}

\keywords{symbolic powers, asymptotic resurgence, Stanley-Reisner ideals, facet ideals,  matroids,  weak order, paving and sparse paving matroids, perfect matroid designs, Steiner systems}

\author{Michael DiPasquale}
\address{Department of Mathematical Sciences \\
New Mexico State University\\
P.O. Box 30001 \\
Department 3MB \\
Las Cruces, NM 88003}
\email{midipasq@nmsu.edu}
\urladdr{\href{https://midipasq.github.io}{https://midipasq.github.io}}

\author{Louiza Fouli}
\address{Department of Mathematical Sciences \\
New Mexico State University\\
P.O. Box 30001 \\
Department 3MB \\
Las Cruces, NM 88003}
\email{lfouli@nmsu.edu}
\urladdr{\href{https://sites.google.com/view/louiza-fouli/home}{\tt https://sites.google.com/view/louiza-fouli}}

\author{Arvind Kumar}
\address{Department of Mathematical Sciences \\
New Mexico State University\\
P.O. Box 30001 \\
Department 3MB \\
Las Cruces, NM 88003}
\email{arvkumar@nmsu.edu}
\urladdr{\href{https://sites.google.com/view/arvkumar/home}{https://sites.google.com/view/arvkumar/home}}

\begin{abstract}
Matroid configurations -- introduced by Geramita, Harbourne, Migliore, and Nagel -- are projective varieties which generalize so-called \textit{star configurations} and whose defining ideals are obtained by appropriately specializing the Stanley-Reisner ideal of a matroid.  Motivated by this connection, we study the asymptotic resurgence of the Stanley-Reisner ideals of matroids.  A result of Villareal shows that it is equivalent to study the asymptotic resurgence of facet ideals.  We prove a formula for the asymptotic resurgence of the facet ideal of a matroid in terms of the Waldschmidt constant of facet ideals of the contractions of the matroid.  As a consequence, we show that asymptotic resurgence respects the weak order on matroids of the same rank. Therefore, the asymptotic resurgence of the facet ideal of a given matroid is bounded above by the asymptotic resurgence of the facet ideal of a so-called \textit{almost-uniform} matroid of the same rank, which we compute explicitly.
  
Guardo, Harbourne, and Van Tuyl showed that the asymptotic resurgence of an ideal is bounded below by the ratio of the initial degree of the ideal by its Waldschmidt constant.  We prove that this lower bound is an equality for facet ideals of many classes of matroids, including matroids of rank $k$ on a ground set of size $n\ge 2k$ whose dual is paving, perfect matroid designs, and sparse paving matroids arising from Steiner systems.  For the latter two classes, we explicitly compute the asymptotic resurgence.
\end{abstract}

\maketitle

\section{Introduction}
Suppose $I$ is a homogeneous ideal in the polynomial ring $S=\KK[x_1,\ldots,x_n]$, where $\KK$ is a field.  If $r,s\in\NN$, two notions of taking powers of $I$ are the \textit{ordinary powers} $I^r$ and the \textit{symbolic powers} $I^{(s)}:=\bigcap_{P\in\mbox{Ass}(I)} (P^sS_P)\cap S$.  The ordinary powers are more natural from an algebraic point of view; the generators of $I^r$ are just the products of $r$ generators of $I$.  The symbolic powers are more natural from a geometric point of view; if $I$ is the ideal of homogeneous polynomials vanishing on a projective variety $X\subset\mathbb{P}^{n-1}_\KK$ then $I^{(s)}$ consists of all homogeneous polynomials that vanish to order $s$ along $X$ by the Zariski-Nagata theorem.  There is a great deal of literature on symbolic powers of ideals -- we direct the reader to the survey~\cite{DDsGHN}.

One of the primary ways to compare symbolic and ordinary powers of an ideal is captured by the celebrated \textit{Containment Problem}, which deals with determining the pairs $\{(s,r)\in\NN^2~:~ I^{(s)}\subseteq I^r\}$.  Seminal containment results of Ein-Lazarsfeld-Smith~\cite{ELS01}, Hochster-Huneke~\cite{HH02}, and Ma-Schwede~\cite{MS17} establish that if $I$ is a radical ideal of an excellent regular ring $R$, then $I^{(hr)}\subset I^r$ for all $r\ge 1$, where $h$ is the maximum height of an associated prime of $I$ (called the \textit{big height} of $I$).

If $I$ is an ideal with big height $h$, it is often the case that $I^{(qr)}\subset I^r$ for $q<h$.  The `best' slope one can put in for $q$ is quantified by the \textit{resurgence} of $I$, denoted by $\rho(I)$, and defined by Bocci and Harbourne~\cite{BH10} as
\[
\rho(I):=\sup\left\lbrace\frac{s}{r}~:~ s,r \in \NN \text{ and } I^{(s)}\not\subset I^r\right\rbrace.
\]
The containment results above~\cite{ELS01,HH02,MS17} imply that $\rho(I)\le h$.

The topic of this paper is a variant of resurgence called \textit{asymptotic resurgence} introduced by Guardo-Harbourne-Van Tuyl~\cite{GHV13}.  The asymptotic resurgence of an ideal $I$ is denoted $\widehat{\rho}(I)$ and defined as
\[
\widehat{\rho}(I):=\sup\left\lbrace\frac{s}{r}~:~ s,r \in \NN \text{ and } I^{(st)}\not\subset I^{rt} \mbox{ for all } t\gg 0\right\rbrace. 
\]
It follows from the definitions that $\widehat{\rho}(I)\le \rho(I)$.  It can happen that $\widehat{\rho}(I)<\rho(I)$, although these examples are somewhat rare~\cite{DHNSST2015,BDHHSS2019,DFMS19,JKM22}.  Both $\rho(I)$ and $\widehat{\rho}(I)$ are extremely difficult to compute in general.  If $I$ is a monomial ideal, one can extract from~\cite{DFMS19} an algorithm to compute $\widehat{\rho}(I)$ -- see \Cref{app:algo}.  Another algorithm appears in~\cite{Villarreal-2023}.  To our knowledge, no algorithm exists for resurgence, even for monomial ideals.

In this paper, we address the problem of computing the asymptotic resurgence for the facet and Stanley-Reisner ideals of the independence complex of a matroid (henceforth we will write simply `the facet ideal of a matroid' or `the Stanley-Reisner ideal of a matroid).  If $M$ is a matroid with independence complex $\Delta(M)$, the \textit{facet ideal} $I(M)$ is the equigenerated squarefree monomial ideal with generators corresponding to the \textit{bases} of $M$, while the \textit{Stanley-Reisner} ideal $I_{\Delta(M)}$ is the squarefree monomial ideal with generators corresponding to \textit{circuits} of $M$.  The significance of symbolic powers in the context of matroids stems largely from a result due independently to Minh-Trung~\cite{MT-2011} and Varbaro~\cite{Varbaro-2011}, which states that a simplicial complex is the independence complex of a matroid if and only if all the symbolic powers of its Stanley-Reisner ideal are Cohen-Macaulay.  The Cohen-Macaulay property of symbolic powers of Stanley-Reisner ideals of matroids is crucial to the notion of \textit{matroid configurations}, introduced by Geramita-Harbourne-Migliore-Nagel~\cite{GHMN17}.  Matroid configurations are subvarieties of projective space obtained from certain specializations of Stanley-Reisner ideals of matroids; they generalize the well-studied notion of \textit{star configurations}~\cite{GHM13,AGT-2017,Man20}.  It follows from~\cite{GHMN17} that the asymptotic resurgence (and resurgence) of a matroid configuration is bounded above by the asymptotic resurgence (respectively, resurgence) of the Stanley-Reisner ideal of the corresponding matroid.  Thus, our findings have immediate consequences for the asymptotic resurgence of matroid configurations.  In an upcoming paper, we study both the asymptotic resurgence and resurgence of certain matroid configurations.

We summarize our methodology and highlight our main results.  While Stanley-Reisner ideals of matroids are perhaps more natural to study than facet ideals of matroids, a result of Villareal~\cite{Villarreal-2023} implies that the asymptotic resurgence of the Stanley-Reisner ideal of a matroid is equal to the asymptotic resurgence of the facet ideal of the dual matroid (see \Cref{prop:Alex duality facet}).  With this in mind, we primarily study the asymptotic resurgence of facet ideals of matroids and periodically emphasize the consequences for the asymptotic resurgence of Stanley-Reisner ideals.  As facet ideals of matroids are equigenerated, we can apply a technique from~\cite{DFMS19} to reduce the computation of their asymptotic resurgence to the computation of \textit{Waldschmidt constants}.  Before explaining this in more detail, we recall the notion of the Waldschmidt constant.

The Waldschmidt constant originated in complex analysis~\cite{Waldschmidt-1977} and was rediscovered in the context of commutative algebra by Bocci-Harbourne~\cite{BH10}.  For a homogeneous ideal $I=\bigoplus_{i\ge 0} I_i\subset S$, we let $\alpha(I):=\min\{i~:~ I_i\neq 0\}$ denote the smallest generator degree of $I$.  The Waldschmidt constant of $I$ is the limit
$
\widehat{\alpha}(I):=\lim\limits_{s\to\infty} \frac{\alpha(I^{(s)})}{s}.
$
The limit exists by subadditivity (see~\cite{BH10}).  The Waldschmidt constant is itself the subject of a great deal of research; see, for example, \cite{DumnickiFashamiSzpondTutajGasinska, DumnickiSzemberSzpond, BCGHJNSVV}.

Our first result, derived in \Cref{Sec:asymp-resurgence}, expresses the asymptotic resurgence of the facet ideal of a matroid in terms of the Waldschmidt constant of the facet ideals of its contractions.  This is one of the main tools we use to study asymptotic resurgence of facet ideals of matroids.

\begin{Main1}
Let $M$ be a matroid on a ground set $E$.  Then
\[
\widehat{\rho}(I(M))=\max_{U\subseteq E, \rk_M(U)<\rk(M)}
\left\lbrace\dfrac{\rk(M)-\rk_M(U)}{\widehat{\alpha}(I(M/U))}\right\rbrace,
\]
where $\rk(M)$ is the rank of $M$, $\rk_M(U)$ is the rank of $U$, and $M/U$ is the contraction of $U$ from $M$.
\end{Main1}

\Cref{thm:asym-res-facet} highlights the importance of determining the Waldschmidt constant of the facet ideal of a matroid.  
In general, determining the Waldschmidt constant of the facet ideal of a matroid appears to be more challenging than determining the Waldschmidt constant of the Stanley-Reisner ideal of a matroid (see \Cref{rem:appSRTutte} and \Cref{rem:appFTutte}).  The latter problem is studied in ~\cite{DFKT24} and~\cite{ManNgu}.

In~\cite[Theorem~1.2]{GHV13}, Guardo-Harbourne-Van Tuyl prove that
\begin{equation}\label{eq:rhohatineq}
\frac{\alpha(I)}{\widehat{\alpha}(I)}\le \widehat{\rho}(I).
\end{equation}
If $I$ is the facet ideal of a matroid, this inequality is apparent from \Cref{thm:asym-res-facet}.  We refer to \eqref{eq:rhohatineq} as \textit{the GHV lower bound}; if an ideal $I$ satisfies \eqref{eq:rhohatineq} with equality, we say $I$ achieves equality in \GHV{}.

In \Cref{sec:GHVbound}, we pose \Cref{ques:rhohat=alpha/alphahat}: \textit{Which matroids have a facet ideal achieving equality in \GHV{}?}  This is a useful question to have in the back of one's mind for the remainder of the paper.  We provide some initial answers to this question in \Cref{sec:GHVbound}; further answers for several classes of matroids are provided in \Cref{sec:weakorder,sec:DesignsAndPMDs,sec:asym-Steiner-system}. 

In \Cref{sec:weakorder}, we first derive another tool for studying asymptotic resurgence of facet ideals of matroids.  It states that asymptotic resurgence respects the weak order on matroids of the same rank.

\begin{Main2}
Let $M_1$ and $M_2$ be matroids of rank $k$ on the same ground set so that $M_1\preceq M_2$ in the weak order.  Then $\widehat{\rho}(I(M_1)) \le \widehat{\rho}(I(M_2))$ and $\widehat{\rho}(I_{\Delta(M_1)})\le \widehat{\rho}(I_{\Delta(M_2)})$.  
\end{Main2}

An immediate consequence of \Cref{thm:asym-order} is that the uniform matroid $\mathrm{U}_{k,n}$ of rank $k$ on a ground set of size $n$ achieves maximal asymptotic resurgence among matroids of rank $k$ on $n$ elements.  However, this upper bound is only achieved by uniform matroids.  If a matroid is not uniform, it necessarily precedes a so-called \textit{almost-uniform matroid} $\mathrm{U}^{-}_{k,n}$ (see \Cref{ss:almost-uniform}) in weak order.  We compute the Waldschmidt constant of almost-uniform matroids in \Cref{lem:relaxation-to-uniform} and deduce, via \Cref{thm:asym-res-facet} and \Cref{thm:asym-order}, the asymptotic resurgence of facet ideals of almost-uniform matroids in \Cref{thm:relaxation-to-uniform}.

\begin{Main3}
Let $\mathrm{U}_{k,n}^-$ be an  almost-uniform matroid of rank $k$ on the ground set $E$ of size $n$ with $k <n$.   Then, 
    \[
    \widehat{\rho}(I(\mathrm{U}_{k,n}^-))=\max\left\{\frac{(k-1)(n-k+1)}{n-1},\frac{k(n-k)}{n-1}\right\}
    =\begin{cases}
     \dfrac{k(n-k)}{n-1} & \text{ if } 2k\le {n+1}\\[10 pt]
    \dfrac{(k-1)(n-k+1)}{n-1} & \text{ if } 2k\ge n+1.
    \end{cases}
    \]
\end{Main3}

\Cref{thm:relaxation-to-uniform}, coupled with \Cref{thm:asym-order}, yields an upper bound on the asymptotic resurgence of facet ideals of non-uniform matroids of rank $k$ on $n$ elements (\Cref{cor:asym-upper-bound}).  This upper bound is achieved by quite a few matroids besides the almost-uniform matroids (see \Cref{rem:chainsinweakorder}).

By \Cref{thm:relaxation-to-uniform} and \Cref{lem:relaxation-to-uniform}, the facet ideal of an almost-uniform matroid of rank $k$ on $n$ elements achieves equality in \GHV{} if and only if $2k\le n+1$.

One of our main results is that the large class of matroids whose dual is paving roughly follows the trend of almost-uniform matroids with regard to \GHV{}.

\begin{Main4}
Let $M$ be a non-uniform matroid of rank $k$ on the ground set $E$ of size $n$.  If $M^*$ is paving and $2k\le n$, then
\[
\widehat{\rho}(I(M))=\frac{\alpha(I(M))}{\widehat{\alpha}(I(M))}=\frac{k}{\widehat{\alpha}(I(M))}.
\]
\end{Main4}

\Cref{thm:asym-res-dual-paving} is sharp.  If $n\le 2k-2$, the almost-uniform matroid $\mathrm{U}_{k,n}^-$ satisfies $\widehat{\rho}(I(\mathrm{U}_{k,n}^-))>\frac{k}{\widehat{\alpha}(I(\mathrm{U^-_{k,n}}))}$.  Moreover, for any integer $k\ge 5$, we construct in \Cref{app:sharp} a sparse paving matroid $M_{k,2k-1}$ of rank $k$ on $2k-1$ elements so that $\widehat{\rho}(I(M_{k,2k-1}))>\frac{k}{\widehat{\alpha}(I(M_{k,2k-1}))}$ (\Cref{ex:sparse-paving-family} and \Cref{prop:sparse-paving-family}).

In \cite{MNWW-2011}, it is conjectured that, as $n$ tends to infinity, the ratio of the number of sparse paving matroids of rank $k$ with $(n-1)/2\le k\le (n+1)/2$ to the total number of matroids on $n$ elements tends to one.  Based on this conjecture, \Cref{thm:asym-res-dual-paving}, and computational evidence (see \Cref{tab:master-full}), we propose in \Cref{conj:asymptoticresurgencemostmatroids} that, as $n$ tends to infinity, the ratio of the number of matroids whose facet ideal achieves equality in \GHV{} to the total number of matroids on $n$ elements tends to one.

While \Cref{thm:asym-res-dual-paving} covers a large class of matroids, many of which have no symmetries, in \Cref{sec:DesignsAndPMDs} and \Cref{sec:asym-Steiner-system} we address two specialized classes of matroids with a great deal of symmetry: perfect matroid designs and sparse paving matroids arising from Steiner systems (our study of the latter was inspired by \cite{BFGM21}).  Both of these classes of matroids are closed under contraction and have symmetries encoded by \textit{designs}, which allow the determination of the Waldschmidt constant (\Cref{lem:bd-growth-rate}).  Thus, we determine the asymptotic resurgence using \Cref{thm:asym-res-facet}.  Both classes have facet ideals which achieve equality in \GHV{}.

\begin{Main5}
Suppose $M$ is a perfect matroid design of rank $k$ on the ground set $E$ of size $n$.  Let $\ell$ be the number of loops of $M$ and $c$ be the size of a circuit of $M^*$ (all of these have the same size).  Then
\[
\widehat{\rho}(I(M))=\frac{kc}{n-\ell}=\widehat{\rho}(I_{\Delta(M^*)}).
\]
\end{Main5}

If $\mathcal{S}$ is a Steiner system of type $S(t,k,n)$, then one can define a sparse paving matroid $M(\mathcal{S})$ whose circuit-hyperplanes are the blocks of $\mathcal{S}$ (see \Cref{def:matroidfromSteiner}).

\begin{Main6}
Suppose $M=M(\mathcal{S})$ is the matroid of a Steiner system of type $S(t,k,n)$ with $t<k<n$.  Then
\[
\widehat{\rho}(I(M))=\dfrac{k(n-k)}{n}=\widehat{\rho}(I_{\Delta(M^*)}).
\]
\end{Main6}

In \Cref{sec:conc-rmk}, we close the main body of the paper with a few remarks making additional connections to the literature.

We include three appendices.  \Cref{app:algo} contains a description of an algorithm to compute asymptotic resurgence of squarefree monomial ideals, which we implemented in Macaulay2~\cite{M2}. \Cref{app:sharp} contains a family of examples that show that \Cref{thm:asym-res-dual-paving} is sharp, including a couple of explicit examples where we used the algorithm from \Cref{app:algo} to compute the asymptotic resurgence.  \Cref{app:small-rank} contains data on the asymptotic resurgence of simple matroids on ground sets of size eight or less, computed using the algorithm from \Cref{app:algo}.  This data informed \Cref{conj:asymptoticresurgencemostmatroids}.  We make a number of additional observations regarding the data in a sequence of remarks in \Cref{app:small-rank}.  Notably, computations show that neither the Waldschmidt constant nor the asymptotic resurgence of the facet ideal of a matroid can be computed from its Tutte polynomial (see~\Cref{rem:appFTutte}).  It follows that the asymptotic resurgence of the Stanley-Reisner ideal of a matroid cannot be detected from its Tutte polynomial.  This is in contrast to the \textit{Waldschmidt constant} of the Stanley-Reisner ideal of a matroid, which \textit{can} be determined from its Tutte polynomial (see~\Cref{rem:appSRTutte}).

\vspace{10 pt}

\section{Background}
In this section, we collect the notation and background that we need for the remainder of the paper.  We fix the following conventions and notation:
\begin{itemize}
\item $S=\KK[x_1,\ldots,x_n]$ is a polynomial ring over a field $\KK$.
\item If $f_1,\ldots,f_k\in S$, we write $\langle f_1,\ldots,f_k\rangle\subset S$ for the ideal generated by $f_1,\ldots,f_k$.
\item If ${\bf a}=(a_1,\ldots,a_n)\in\ZZ_{\ge 0}^n$, we write $x^{\bf a}$ for the monomial $x_1^{a_1}\cdots x_n^{a_n}$.
\item If $I$ is a homogeneous ideal of $S$, we write $\mathcal{G}(I)$ for a minimal homogeneous generating set; if $I$ is a monomial ideal, $\mathcal{G}(I)$ consists of a minimal set of monomial generators of  $I$.
\item If $n$ is a positive integer, we put $[n]:=\{1,\ldots,n\}$.
\item If $U\subset [n]$, we put $x^U:=\prod\limits_{i\in U}x_i\in S$ and $P_U:=\langle x_i~:~i\in U\rangle\subset S$.
\end{itemize}
In this paper, all the ideals we consider are homogeneous.  For an ideal $I\subset S$, we write $\mbox{Min}(I)$ for its set of minimal primes.  
The $s^{th}$ symbolic power of a squarefree monomial ideal $I$ satisfies (see~\cite{CEHH17})
\[
I^{(s)}=\bigcap_{P\in\mbox{Min}(I)}P^s.
\]

\subsection{The Newton and symbolic polyhedra}
In this section, we recall the definitions of the Newton and symbolic polyhedra.  The Waldschmidt constant and asymptotic resurgence can be computed from the symbolic polyhedron by \cite{CEHH17,DFMS19}.  Our references for this section are Ziegler's text~\cite{Ziegler95} for polyhedra and the article~\cite{CEHH17} for the Newton and symbolic polyhedra.

Let $\bf{e}_1,\ldots,\bf{e}_n\in \RR^n$ be the standard basis vectors.  If ${\bf v}=\sum\limits_{i=1}^n v_i {\bf e}_i\in\RR^n$, we will also write ${\bf v}$ as the tuple ${\bf v}=(v_1,\ldots,v_n)$.  We write $\widehat{\RR^n}$ for the dual vector space with dual basis $\widehat{\bf{e}}_1,\ldots,\widehat{\bf{e}}_n$.  For vectors $\bf{v}\in \RR^n,\bf{w}\in\widehat{\RR^n},$ we write $\langle\bf{v},\bf{w}\rangle$ for the pairing between $\RR^n$ and $\widehat{\RR^n}$, which is given by the standard dot product if $\bf{v},\bf{w}$ are written in the bases $\bf{e}_1,\ldots,\bf{e}_n$ and $\widehat{\bf{e}}_1,\ldots,\widehat{\bf{e}_n}$, respectively.

Suppose $A=\{{\bf a}_1,\ldots,{\bf a}_p\}\subseteq \RR^n$ is a finite collection of vectors.  The \textit{conical hull} of $A$ is the collection of all \textit{conical combinations} of vectors of $A$:
\[
\cone(A):=\left \{\sum_{i=1}^p c_i{\bf a}_i~:~ c_i \ge 0 \mbox{ for } i=1,\ldots, p\right\}.
\]
The \textit{convex hull} of $A$, written $\conv(A)$, is the collection of all \textit{convex combinations} of vectors in $A$:
\[
\conv(A):=\left\{\sum_{i=1}^p c_i{\bf a}_i~:~ c_i\ge 0 \mbox{ for } i=1,\ldots, p\mbox{ and }\sum_{i=1}^p c_i=1\right\}.
\]
A \textit{polyhedral cone} is the conical hull of a finite set of vectors in $\RR^n$, and a \textit{polytope} is the convex hull of a finite set of vectors in $\RR^n$.  In particular, we define $\RR^n_{\ge 0}:=\cone({\bf e}_1,\ldots,{\bf e}_n)$ and $\widehat{\RR^n}_{\ge 0}=\cone(\widehat{e_1},\ldots,\widehat{e_n})$.

Given sets $X,Y\subseteq  \RR^n$ (not necessarily finite), the \textit{Minkowski sum} of $X$ and $Y$ is
\[
X+Y:=\{{\bf a}+{\bf  b}~:~{\bf a}\in X,{\bf b}\in Y\}.
\]
A \textit{polyhedron} is the Minkowski sum of a polyhedral cone and a polytope.

There are dual descriptions for polytopes, cones, and polyhedra in terms of intersections of half-spaces.  Given a vector ${\bf w}\in \widehat{\mathbb{R}^n}$ and a real number $r\in\RR$, define
\[
H^+_{{\bf w},r}=\{{\bf v}\in\RR^n~:~ \langle {\bf w}, {\bf v}\rangle \ge r\} \quad\mbox{and}\quad H_{{\bf w},r}=\{{\bf v}\in\RR^n~:~ \langle {\bf w},{\bf v}\rangle =r\}.
\]
Sets of the form $H^+_{{\bf w},r}$ are called \textit{half-spaces},  $H_{{\bf w},r}$ is the boundary hyperplane of $H^{+}_{{\bf w},r}$,
and ${\bf w}$ is the inward pointing normal to $H^{+}_{{\bf w},r}$.  A polyhedron may alternatively be described as the intersection of finitely many half-spaces, while a polytope is a bounded polyhedron.  A cone is an intersection of finitely many half-spaces whose boundary hyperplane passes through the origin.

\begin{Definition}\label{def:NewtonPolyhedron}
Let $I\subset S$ be a monomial ideal with generating set $\mathcal{G}(I)$.  The Newton polyhedron of $I$ is the Minkowski sum
\[
\NP(I):=\conv({\bf a} ~:~ x^{\bf a} \in \mathcal{G}(I))+\RR^n_{\ge 0}.
\]
\end{Definition}

In~\cite{CEHH17}, the symbolic polyhedron is defined for an arbitrary monomial ideal.  We give the definition only in the squarefree case.  If $U\subset [n]$, we define $\chi_U:=\sum_{i\in U} \widehat{{\bf e}}_i\in \widehat{\RR^n}$. 

\begin{Definition}\label{def:SymbolicPolyhedron}
Let $I$ be a squarefree monomial ideal of $S$. The symbolic polyhedron of $I$ is
\[
\SP(I):=\left\{{\bf a}=(a_1,\ldots,a_n)\in\RR^n_{\ge 0}~:~ \langle \chi_U, {\bf a} \rangle=\sum_{i\in U} a_i \ge 1\mbox{ for all } P_U\in\mbox{Min}(I)\right\}.
\]
\end{Definition}

The following lemma encodes an important dictionary between the algebra of monomials and convex geometry.  Recall that the \textit{integral closure} of a monomial ideal $I\subset S$ is defined as
\[
\overline{I}=\{m\in S: m^r\in I^r \mbox{ for some positive integer } r \}.
\]

\begin{Lemma}\label{lem:regular_symbolic_SP}
Suppose $m=x^{\bf a}$ is a monomial in $S$ and $I$ is a squarefree monomial ideal of $S$.  Then
\begin{enumerate}
    \item $m\in \overline{I^r}$ if and only if $\frac{{\bf a}}{r}\in \NP(I)$, where $\overline{I^r}$ is the integral closure of $I^r$.
\item  $m\in I^{(s)}$ if and only if $\frac{{\bf a}}{s}\in \SP(I)$.
\end{enumerate}
\end{Lemma}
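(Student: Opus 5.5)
Both parts reduce to unwinding the definitions and applying a standard description of each side; the only real input is a description of $\overline{I^r}$ for monomial ideals and the prime-by-prime description of $I^{(s)}$.

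For (1), I will use the classical fact that the integral closure of a monomial ideal $J$ is the monomial ideal spanned by the monomials whose exponent vectors lie in $\conv(\{{\bf b} : x^{\bf b}\in J\})$. Equivalently, $x^{\bf a}\in\overline{J}$ if and only if ${\bf a}\in \NP(J)$.
\begin{itemize}
\item Apply this to $J=I^r$. Since $\mathcal{G}(I^r)$ consists of products of $r$ generators of $I$, the generator exponents of $I^r$ are sums ${\bf b}_1+\cdots+{\bf b}_r$ with $x^{{\bf b}_j}\in\mathcal{G}(I)$.
\item Because convex hulls commute with Minkowski sums, we get $\NP(I^r)=r\,\NP(I)$. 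The inclusion $\subseteq$ is clear. For $\supseteq$, note that $r\sum c_i{\bf a}_i=\sum c_i (r{\bf a}_i)$ is a convex combination of the points $r{\bf a}_i$, each of which is the exponent of $(x^{{\bf a}_i})^r\in I^r$.
\item Hence $x^{\bf a}\in\overline{I^r}$ if and only if ${\bf a}\in r\NP(I)$, that is, ${\bf a}/r\in\NP(I)$.
\end{itemize}
If one does not want to quote the classical fact, it can be proved directly from the definition $\overline{I}=\{m: m^t\in I^t\}$ given in the paper.
\begin{itemize}
\item If $m^t\in I^{rt}$, then $t{\bf a}$ dominates a sum of $rt$ generator exponents of $I$. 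Dividing by $rt$ shows ${\bf a}/r$ dominates a convex combination of generator exponents, so ${\bf a}/r\in\NP(I)$.
\item Conversely, suppose ${\bf a}/r\in\NP(I)$. The generator exponents are rational, so by Carathéodory we can write ${\bf a}/r\ge\sum c_i{\bf a}_i$ with rational $c_i$. Clearing the common denominator $t$ gives $m^t\in I^{rt}=(I^r)^t$.
\end{itemize}
This converse, with its rationality and Carathéodory step, is the only step that requires any care.

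For (2), I will use the stated formula $I^{(s)}=\bigcap_{P_U\in\mathrm{Min}(I)}P_U^s$.
\begin{itemize}
\item A monomial $x^{\bf a}$ lies in $P_U^s$ exactly when its total degree in the variables indexed by $U$ is at least $s$, i.e. $\sum_{i\in U}a_i\ge s$. This holds because $P_U^s$ is generated by the monomials of degree $s$ in $\{x_i: i\in U\}$.
\item Therefore $x^{\bf a}\in I^{(s)}$ if and only if $\langle\chi_U,{\bf a}\rangle\ge s$ for every minimal prime $P_U$.
\item Since ${\bf a}\ge 0$, this is precisely the condition ${\bf a}/s\in\SP(I)$ in Definition~\ref{def:SymbolicPolyhedron}.
\end{itemize}
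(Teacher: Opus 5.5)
Your proposal is correct and takes the same route as the paper. The paper's proof is a one-line appeal to the Huneke--Swanson convex-hull description of integral closures of monomial ideals for (1), and to the definitions via $I^{(s)}=\bigcap_{P_U\in\mathrm{Min}(I)}P_U^s$ for (2); your identification $\NP(I^r)=r\,\NP(I)$ and your direct argument from $m^t\in I^{rt}$ simply fill in details the paper leaves implicit (for the rationality step, apply Carath\'eodory to the lifted vectors $({\bf a}_i,1)$ and $({\bf e}_j,0)$ in $\RR^{n+1}$, so the coefficients solve a nonsingular rational linear system).
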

\begin{proof}
The proof is straightforward from the definitions and the description of integral closure of monomial ideals in~\cite{HS-2006}.  See also~\cite[Section~2]{DFMS19}.
\end{proof}

We will use the following result that allows the computation of the Waldschmidt constant of a squarefree monomial ideal from its symbolic polyhedron (see also \cite[Theorem~3.2]{BCGHJNSVV}).

\begin{Theorem}[{\cite[Corollary~6.3]{CEHH17}}]\label{thm:SPWaldschmidt}
Let $I$ be a squarefree monomial ideal of $S$.  Then
\[
\widehat{\alpha}(I)=\min\left\{\langle \chi_{[n]},{\bf a}\rangle=\sum_{i=1}^n a_i~:~ {\bf a}=(a_1,\ldots,a_n)\in \SP(I)\right\}.
\]
\end{Theorem}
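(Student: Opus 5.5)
The plan is to compare both sides through the monomials in $I^{(s)}$, using \Cref{lem:regular_symbolic_SP}(2) to pass between monomials and points of $\SP(I)$. Write $w:=\min\{\langle \chi_{[n]},{\bf a}\rangle : {\bf a}\in \SP(I)\}$; this minimum exists because $\SP(I)$ is a rational polyhedron contained in $\RR^n_{\ge 0}$ and the linear functional $\chi_{[n]}$ is bounded below by $0$ on it, so it is attained at a vertex, which has rational coordinates. The goal is to show $\widehat{\alpha}(I)=w$ by proving the two inequalities separately.

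For $\widehat{\alpha}(I)\ge w$: let $x^{\bf a}$ be a monomial of degree $\alpha(I^{(s)})$ in $I^{(s)}$. Such a monomial exists because $I^{(s)}=\bigcap_P P^s$ is a monomial ideal. By \Cref{lem:regular_symbolic_SP}(2), ${\bf a}/s\in\SP(I)$, so $\alpha(I^{(s)})/s=\langle\chi_{[n]},{\bf a}/s\rangle\ge w$. Taking $s\to\infty$ gives $\widehat{\alpha}(I)\ge w$.

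For $\widehat{\alpha}(I)\le w$: choose a rational vertex ${\bf v}\in\SP(I)$ with $\langle\chi_{[n]},{\bf v}\rangle=w$, and let $d$ be a common denominator, so that $d{\bf v}\in\ZZ^n_{\ge0}$. For every $t\ge1$, the vector $td{\bf v}$ is integral and $(td{\bf v})/(td)={\bf v}\in\SP(I)$, so \Cref{lem:regular_symbolic_SP}(2) gives $x^{td{\bf v}}\in I^{(td)}$. Hence $\alpha(I^{(td)})\le tdw$. Since the limit defining $\widehat{\alpha}$ exists, evaluating it along the subsequence $s=td$ gives $\widehat{\alpha}(I)\le w$.

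The only point needing care is the rationality of the minimizer, which comes from $\SP(I)$ being cut out by inequalities with integer coefficients. There is no real obstacle beyond this.
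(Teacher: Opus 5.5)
Your argument is correct and complete. The lower bound follows from \Cref{lem:regular_symbolic_SP}(2) applied to a monomial of minimal degree in $I^{(s)}$. The upper bound follows from clearing denominators of a rational minimizing vertex; the only extra point worth stating is that $\SP(I)\neq\emptyset$ (e.g.\ $(1,\ldots,1)\in\SP(I)$ for proper $I$), so that the minimum is attained. The paper gives no proof of its own here and simply cites \cite[Corollary~6.3]{CEHH17} (see also \cite[Theorem~3.2]{BCGHJNSVV}). Your self-contained argument, combining the monomial/lattice-point dictionary with attainment of the linear-programming minimum at a rational vertex, is essentially the standard proof behind those references.
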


\subsection{Stanley-Reisner ideal of a simplicial complex and Alexander duality}
A \textit{simplicial complex} $\Delta$ on a vertex set $[n]$ is a non-empty collection of subsets of $[n]$ (called \textit{faces} of $\Delta$) satisfying that if $\sigma\in\Delta$ and $\sigma'\subseteq\sigma$, then $\sigma'\in\Delta$.  A \textit{non-face} of a simplicial complex $\Delta$ is a subset of $[n]$ that is not an element of $\Delta$.  Observe that a simplicial complex is determined by its set of \textit{minimal} non-faces; the \textit{Stanley-Reisner ideal} of $\Delta$ is the squarefree monomial ideal $I_{\Delta}$ defined as
\[
I_{\Delta} :=\langle x^U~:~ U \mbox{ a minimal non-face of } \Delta\rangle.
\]
The Stanley-Reisner ideal admits a prime decomposition (see~\cite[Theorem~1.7]{Miller-Sturmfels-2005})
\begin{equation}\label{eq:SRPrimeDecomposition}
I_{\Delta}=\bigcap_{\sigma\in\Delta} P_{[n]\setminus \sigma}.
\end{equation}
Clearly, the intersection can be taken over the maximal faces $\sigma\in\Delta$, called \textit{facets}.

Recall that if $I$ is a squarefree monomial ideal, the \textit{Alexander dual} of $I$, which we denote by $\dual{I}$, is defined by
\[
\dual{I}:=\langle x^U~:~ P_U\in\mbox{Min}(I)\rangle.
\]
For Stanley-Reisner ideals, this has a nice interpretation in terms of $\Delta$ due to~\eqref{eq:SRPrimeDecomposition}:
\begin{equation}\label{eq:AlexDual}
\dual{I_{\Delta}}= \langle x^{[n]\setminus\sigma}: \sigma\in\Delta\rangle.
\end{equation}
We will frequently use the following important result of Villareal.

\begin{Theorem}[{\cite[Theorem~3.7]{Villarreal-2023}}] \label{thm:Villa-duality}
If $I$ is a squarefree monomial ideal, then $\widehat{\rho}(I)=\widehat{\rho}(\dual{I})$.
\end{Theorem}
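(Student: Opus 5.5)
The plan is to write $\widehat{\rho}(I)$ as an optimization problem over a pair of polyhedra, one for $I$ and one for $\dual{I}$, in which the roles of $I$ and $\dual{I}$ visibly interchange. The starting point is the reformulation from~\cite{DFMS19}: for a monomial ideal, $\widehat{\rho}(I)=\sup\{s/r : I^{(s)}\not\subseteq \overline{I^r}\}$. We take this as the known input. Combining it with \Cref{lem:regular_symbolic_SP}, a monomial $x^{\bf a}$ witnesses $I^{(s)}\not\subseteq\overline{I^r}$ exactly when ${\bf a}/s\in\SP(I)$ and ${\bf a}/r\notin\NP(I)$. Setting ${\bf b}={\bf a}/s$ and $c=s/r$, and using density of rational points together with the fact that both polyhedra are rational, we get
\[
\widehat{\rho}(I)=\sup\{c>0~:~ \exists\, {\bf b}\in\SP(I) \text{ with } c{\bf b}\notin\NP(I)\}.
\]

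Next I would bring in blocking duality (Fulkerson) for up-closed polyhedra in $\RR^n_{\ge 0}$. For such a polyhedron $P$, define its blocker $B(P)=\{{\bf w}\ge 0 : \langle {\bf w},{\bf v}\rangle\ge 1 \ \forall {\bf v}\in P\}$. Two facts are needed.
\begin{itemize}
\item By \Cref{def:SymbolicPolyhedron}, $\SP(I)$ is cut out by $\langle\chi_U,{\bf a}\rangle\ge1$, where $x^U$ ranges over the generators of $\dual{I}$. Hence $\SP(I)=B(\NP(\dual{I}))$.
\item Since $B(B(P))=P$, we also get $B(\SP(\dual{I}))=\NP(I)$.
\end{itemize}
The second fact gives an inequality description of $\NP(I)$: a nonnegative vector ${\bf v}$ lies in $\NP(I)$ iff $\langle{\bf w},{\bf v}\rangle\ge 1$ for all ${\bf w}\in\SP(\dual{I})$. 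It suffices to test the finitely many vertices of $\SP(\dual{I})$. The coordinate inequalities ${\bf v}\ge 0$ are automatic for $c{\bf b}$, so they play no role.

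Therefore $c{\bf b}\notin\NP(I)$ iff $c\langle{\bf w},{\bf b}\rangle<1$ for some ${\bf w}\in\SP(\dual{I})$, and so
\[
\widehat{\rho}(I)=\frac{1}{\min\{\langle{\bf w},{\bf b}\rangle~:~{\bf b}\in\SP(I),\ {\bf w}\in\SP(\dual{I})\}}.
\]
The minimum is attained: the bilinear form is nonnegative on these sets, and the minimum can be taken over the finitely many vertex pairs, since minimizing a nonnegative linear function over an up-closed polyhedron is achieved at a vertex. Because $\dual{(\dual{I})}=I$, the right-hand side is symmetric under $I\leftrightarrow\dual{I}$. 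This gives $\widehat{\rho}(I)=\widehat{\rho}(\dual{I})$.

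The main obstacle I expect is the blocking-duality step, specifically the identity $B(\SP(\dual{I}))=\NP(I)$. The equality $\SP(I)=B(\NP(\dual{I}))$ is immediate from the definitions, but the reverse direction needs the bipolar/Farkas argument for up-closed polyhedra. One must also check that the degenerate coordinate facets and any zero coordinates do not spoil it. I would prove it directly: if ${\bf v}\ge0$ lies outside $\NP(I)$, separate ${\bf v}$ from $\NP(I)$ by a hyperplane. Up-closedness forces the normal ${\bf w}$ to be nonnegative, and after rescaling we may assume $\langle{\bf w},{\bf v}\rangle<1\le\langle{\bf w},\cdot\rangle$ on the generators of $I$. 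The latter condition says exactly that ${\bf w}\in\SP(\dual{I})$. A secondary point is the reduction to integral closures, which I would cite from~\cite{DFMS19} rather than reprove.
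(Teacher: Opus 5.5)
The paper does not prove this statement; it imports it from \cite{Villarreal-2023}. So there is no internal argument to match, and your proof has to stand on its own. It does: the proposal is correct.

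Start from $\widehat{\rho}(I)=\sup\{s/r : I^{(s)}\not\subseteq\overline{I^r}\}$ and \Cref{lem:regular_symbolic_SP}. Your direct separation argument then already gives $\NP(I)=\{{\bf v}\in\RR^n_{\ge 0} : \langle{\bf w},{\bf v}\rangle\ge 1 \text{ for all } {\bf w}\in\SP(\dual{I})\}$ without invoking $B(B(P))=P$. There the normal is nonnegative by up-closedness, and rescaling is legitimate because $\beta>\langle{\bf w},{\bf v}\rangle\ge 0$. The resulting identity
\[
\widehat{\rho}(I)^{-1}=\min\{\langle{\bf w},{\bf b}\rangle : {\bf b}\in\SP(I),\ {\bf w}\in\SP(\dual{I})\}
\]
is manifestly symmetric under $I\leftrightarrow\dual{I}$.

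It is worth seeing where this sits relative to \Cref{app:algo}. Your blocker identity says that the supported facet normals of $\NP(I)$, scaled so that ${\bf w}(I)=1$, are vertices of $\SP(\dual{I})$. So your formula is exactly \Cref{thm:asymptoticresurgencebyvaluations} rewritten with $\SP(\dual{I})$ in place of $\NP(I)$. That is the alternative algorithm the paper attributes to Villarreal. Your route therefore gives a self-contained proof in which the duality is transparent and the computation reduces to finitely many vertex pairs. Like \Cref{thm:asymptoticresurgencebyvaluations}, it still rests on the integral-closure characterization of $\widehat{\rho}$ from \cite{DFMS19}.

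One detail to add: the minimum is strictly positive, so $\widehat{\rho}(I)$ is finite. The condition $\sum_{i\in U}b_i\ge 1$ for every $P_U\in\mbox{Min}(I)$ forces $\mathrm{supp}({\bf b})$ to contain the support $V$ of some minimal generator $x^V$ of $I$; otherwise its complement would contain a minimal vertex cover. Then ${\bf w}\in\SP(\dual{I})$ gives $\sum_{i\in V}w_i\ge 1$, so $\langle{\bf w},{\bf b}\rangle>0$.
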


\subsection{Matroids and their facet and Stanley-Reisner ideals}

In this section, we collect some definitions and terminology related to matroids and their associated facet and Stanley-Reisner ideals.  Standard references for matroids, which we use throughout the paper, are~\cite{Oxley2011} and~\cite{Wel}.

A \emph{matroid} $M=(E,\cI)$ is a pair where $E$ is a finite set called the \textit{ground set} of $M$ and $\cI$ is a collection of subsets of $E$ called the \textit{independent sets} of $M$, which satisfy the following (independent set) axioms:
\begin{enumerate}
\item $\emptyset\in \cI$. 
\item If $\sigma\in\cI$ and $\sigma'\subset\sigma$, then $\sigma'\in\cI$. 
\item If $\sigma,\sigma'\in\cI$ and $|\sigma|>|\sigma'|$ then there is some $x\in\sigma\setminus\sigma'$ so that $\sigma'\cup\{x\}\in\cI$.
\end{enumerate}
Observe that the first two independent set axioms imply that the independent sets of a matroid $M$ form a simplicial complex.  This simplicial complex is called the \textit{independence complex} of the matroid; we denote it by $\Delta(M)$.

The \emph{bases} of a matroid $M$, which we denote $\cB(M)$, are the maximal independent sets.  A pair $M=(E, \cB)$ is a matroid with bases $\cB$ if and only if the following (basis) axioms are satisfied:
\begin{enumerate}
\item $\cB\neq \emptyset$,
\item (\textit{Basis Exchange Axiom}) If $B,B'\in\cB$ and $x\in B\setminus B'$, then there is some $y\in B'\setminus B$ so that $\left(B\setminus \{x\}\right)\cup\{y\}\in\cB$.
\end{enumerate}

A subset $D\subseteq E$ is called \emph{dependent} in $M$ if $D\not\in \cI$.  The minimal dependent sets are called the \emph{circuits} of $M$. We denote by $\cir(M)$ the collection of circuits of $M$. 

Let $M$ be a matroid on the ground set $E$ of size $n$.  Without loss of generality, we may assume $E=[n]$.  Let $S=\KK[x_e~:~e\in E]$.  The {\it facet ideal (or basis ideal) of $M$}, denoted as $I(M)$, is a squarefree monomial ideal defined as
\[
I(M):=\langle x^B~:~ B \in \cB(M) \rangle \subset S.
\]
Since the non-faces of $\Delta(M)$ are the dependent sets and the minimal dependent sets are circuits of $M$, the Stanley-Reisner ideal of $\Delta(M)$ is
\[
I_{\Delta(M)}=\langle x^C~:~ C\in\cir(M)\rangle \subset S.
\]
For the prime decomposition of $I_{\Delta(M)}$, we use the notion of matroid duality.  If $M=(E,\cB)$ is a matroid, the \textit{dual matroid} $M^*$ is the matroid on the same ground set with bases $\cB(M^*)=\{E\setminus B: B\in \cB\}$.  From~\eqref{eq:SRPrimeDecomposition}, we have
\[
I_{\Delta(M)}=\bigcap_{B\in \cB(M^*)} P_B.
\]

Alexander duality relates the facet ideal of a matroid to the Stanley-Reisner ideal of the dual matroid.

\begin{Proposition} \label{prop:Alex duality facet}
  Let $M$ be a matroid on the ground set $E$. Then $\dual{I_{\Delta(M^*)}}=\langle x^B: B\in \cB(M)\rangle=I(M)$. 
In particular, $\widehat{\rho}(I(M))=\widehat{\rho}(I_{\Delta(M^*)})$.
\end{Proposition}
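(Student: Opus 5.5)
The plan is to read off both claims directly from the prime decomposition of $I_{\Delta(M^*)}$ and the definition of the Alexander dual, and then to invoke \Cref{thm:Villa-duality}.

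\emph{Step 1: identify the minimal primes of $I_{\Delta(M^*)}$.} Apply the prime decomposition $I_{\Delta(N)}=\bigcap_{B\in\cB(N^*)}P_B$ to the matroid $N=M^*$. Since $(M^*)^*=M$, this reads
\[
I_{\Delta(M^*)}=\bigcap_{B\in\cB(M)}P_B.
\]
We need this intersection to be irredundant, so that $\mbox{Min}(I_{\Delta(M^*)})=\{P_B : B\in\cB(M)\}$. An inclusion $P_B\subseteq P_{B'}$ holds exactly when $B\subseteq B'$. Distinct bases are incomparable, because all bases have the same cardinality $\rk(M)$. Hence no prime in the intersection contains another, and every $P_B$ is a minimal prime. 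Equivalently, one can argue via \eqref{eq:SRPrimeDecomposition} directly: the facets of $\Delta(M^*)$ are the bases of $M^*$, and their complements $E\setminus B^*$ are exactly the bases of $M$.

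\emph{Step 2: compute the Alexander dual.} By the definition of $\dual{I}$ (or by \eqref{eq:AlexDual}), the dual is generated by the monomials $x^U$ with $P_U\in\mbox{Min}(I_{\Delta(M^*)})$. By Step 1 this gives
\[
\dual{I_{\Delta(M^*)}}=\langle x^B : B\in\cB(M)\rangle,
\]
which is $I(M)$ by definition of the facet ideal.

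\emph{Step 3: conclude.} Apply \Cref{thm:Villa-duality} to the squarefree monomial ideal $I=I_{\Delta(M^*)}$. This gives $\widehat{\rho}(I_{\Delta(M^*)})=\widehat{\rho}(\dual{I_{\Delta(M^*)}})=\widehat{\rho}(I(M))$.

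The only point needing any care is the irredundancy in Step 1, and it follows from the equicardinality of bases. The rest is bookkeeping, so I expect no real obstacle.
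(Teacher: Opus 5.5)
Your proposal is correct and follows essentially the paper's route. Both arguments identify the minimal generators of the Alexander dual with complements of facets (bases) of $\Delta(M^*)$, i.e.\ with the bases of $M$, and then apply \Cref{thm:Villa-duality}. Your explicit irredundancy check, which uses that distinct bases are incomparable because they have the same size, is the same point the paper handles by passing to minimal generators in \eqref{eq:AlexDual}.
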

\begin{proof}
Since $\Delta(M^*)$ is the independence complex of $M^*$, its simplices are independent sets of $\Delta(M^*)$.  By \eqref{eq:AlexDual}, $\dual{I_{\Delta(M^*)}}$ is generated by squarefree monomials corresponding to complements of independent sets of $M^*$.  The minimal generators of this ideal are the squarefree monomials corresponding to complements of bases of $M^*$, which by definition is the set $\{x^B~:~ B\in \cB(M)\}$.  This proves the first equality.  The second equality follows from \Cref{thm:Villa-duality}.
\end{proof}

Since Alexander duality exchanges minimal generators with minimal primes, the prime decomposition of the facet ideal of a matroid is given by
\[
I(M)=\dual{I_{\Delta(M^*)}}=\bigcap_{C\in\C(M^*)} P_C
\]
This decomposition yields a concrete description of $\SP(I(M))$, which is immediate from \Cref{def:SymbolicPolyhedron}.
\begin{Corollary}\label{cor:SPIM}
Let $M$ be a matroid on the ground set $E=[n]$.  The defining inequalities of $\SP(I(M))\subseteq\RR^n$ are given by $\langle \chi_C,{\bf a}\rangle=\sum\limits_{i\in C}a_i\ge 1$ for every circuit $C\in \C(M^*)$.
\end{Corollary}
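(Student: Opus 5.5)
The description follows directly from \Cref{def:SymbolicPolyhedron}; all that is needed is the set of minimal primes of $I(M)$. So the plan is to compute $\mbox{Min}(I(M))$ via Alexander duality and substitute it into the definition.

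\textbf{Step 1: identify the minimal primes.} By \Cref{prop:Alex duality facet}, $I(M)=\dual{I_{\Delta(M^*)}}$. Alexander duality for squarefree monomial ideals interchanges minimal generators and minimal primes. Concretely, $\dual{J}$ is generated by the $x^U$ with $P_U\in\mbox{Min}(J)$, and taking the dual twice returns $J$. Hence
\[
\mbox{Min}\bigl(\dual{I_{\Delta(M^*)}}\bigr)=\{P_U~:~x^U\in\mathcal{G}(I_{\Delta(M^*)})\}.
\]
The minimal generators of $I_{\Delta(M^*)}$ are exactly the $x^C$ with $C\in\C(M^*)$. Therefore the minimal primes of $I(M)$ are exactly the $P_C$ with $C$ a circuit of $M^*$.

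To avoid relying on double duality, one can also check this directly. A prime $P_U$ contains $I(M)$ if and only if $U$ meets every basis of $M$. This holds if and only if $E\setminus U$ contains no basis of $M$, that is, $E\setminus U$ is not spanning in $M$. Since the complements of bases of $M$ are the bases of $M^*$, a set $E\setminus U$ is spanning in $M$ exactly when $U$ is independent in $M^*$. So $P_U\supseteq I(M)$ if and only if $U$ is dependent in $M^*$. The minimal such $U$ are the circuits of $M^*$, which gives the decomposition $I(M)=\bigcap_{C\in\C(M^*)}P_C$.

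\textbf{Step 2: substitute into the definition.} Plug this list of minimal primes into \Cref{def:SymbolicPolyhedron}. The defining inequalities become $\sum_{i\in C}a_i\ge 1$ for each $C\in\C(M^*)$, together with the standing nonnegativity $a_i\ge 0$. This is exactly the claimed description of $\SP(I(M))$.

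There is essentially no obstacle here. The only point needing care is the identification of minimal primes in Step 1, and the direct spanning-set argument settles it cleanly.
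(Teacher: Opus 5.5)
Your proposal is correct and follows the paper's argument. The paper likewise writes $I(M)=\dual{I_{\Delta(M^*)}}=\bigcap_{C\in\C(M^*)}P_C$, using that Alexander duality exchanges minimal generators with minimal primes, and then reads off $\SP(I(M))$ directly from the definition; your direct spanning-set check that $P_U\supseteq I(M)$ if and only if $U$ is dependent in $M^*$ is a valid, self-contained confirmation of that same decomposition.
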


\section{Asymptotic resurgence of the facet ideal of a matroid}\label{Sec:asymp-resurgence}
In this section, we develop one of our main techniques for studying the asymptotic resurgence of facet ideals of matroids. Our approach is based on a reduction principle that expresses the asymptotic resurgence of a facet ideal of a matroid in terms of the Waldschmidt constants of facet ideals and the asymptotic resurgence of single-element contractions (see \Cref{cor:rhosinglecontraction} and \Cref{prop:asym-single-contraction-matroid}). 
As a consequence, we obtain the main result of this section, showing that the asymptotic resurgence of a facet ideal of a matroid can be computed entirely from the Waldschmidt constants of the facet ideals of its contractions (see \Cref{thm:asym-res-facet}).

\subsection{Asymptotic resurgence for squarefree monomial ideals via contraction}\label{ss:squarefreecontraction}

Let $S=\KK[x_1,\ldots,x_n]$ be a polynomial ring over a field $\KK$.
If $I$ is a monomial ideal of $S$, the \textit{contraction} of $I$ at a subset $U\subset [n]$ is the ideal $I/U$ generated by the monomials obtained from the monomial generators of $I$ by setting the variables $\{x_i:i\in U\}$ to one.  Similarly, the \textit{deletion} of $I$ at $U$ is the ideal $I\backslash U=\langle m\in \mathcal{G}(I) \mid x_i \nmid m \mbox{ for all } i\in U\rangle$ generated by the monomials obtained from the monomial generators of $I$ by setting the variables $\{x_i:i\in U\}$ to zero.  Clearly, we can view $I/U$ and $I\backslash U$ as living in the smaller polynomial ring $\KK[x_i:i\in [n]\setminus U]$.  For our purposes, it is more convenient to view $I/U$ and $I\backslash U$ as living in the same ring as $I$; then we have $I/U=I:(x^U)^\infty$  and $I\backslash U+P_U=I+P_U$.  If $I$ is squarefree, then $I:(x^U)^\infty=I:x^U$, so $I/U=I:x^U$. 

\begin{Definition}\label{def:rhocontraction}
Suppose $I\subset S$ is an equigenerated squarefree monomial ideal -- that is, all its generators have the same degree.  We define
\[
\widehat{\rho}_c(I)=\max\{\widehat{\rho}(I/U)~:~ U\subset [n] \text{ and } U \neq \emptyset\}.
\]
We also define
\[
\widehat{\rho}_{c,1}(I)=\max\{\widehat{\rho}(I/\{i\})~:~ i\in [n] \text{ and } I:x_i \neq I\}.
\]
The definition of $\widehat{\rho}_{c,1}(I)$ ensures that we take the maximum over those $i\in [n]$ so that $I/\{i\}\neq I$.
\end{Definition}

The following proposition is a slight modification of~\cite[Corollary~2.24]{DFMS19}.

\begin{Proposition}\label{prop:rhohatdimensionreduction}
Let $I\subset S$ be an equigenerated squarefree monomial ideal. 
 Then
\[
\widehat{\rho}(I)=\max\left\lbrace \frac{\alpha(I)}{\widehat{\alpha}(I)},\widehat{\rho}_c(I)\right\rbrace
.\]
\end{Proposition}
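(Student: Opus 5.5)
We will prove the two inequalities separately, relying on the description of asymptotic resurgence for monomial ideals from~\cite{DFMS19}. Using \Cref{lem:regular_symbolic_SP}, the key input is the characterization
\[
\widehat{\rho}(I)=\max\{ s/r : \tfrac{1}{s}\SP(I)\not\subseteq \tfrac{1}{r}\NP(I)\}
=\max\left\{\lambda>0 : \lambda\,\SP(I)\not\subseteq \NP(I)\right\}^{-1}\text{-type quantity},
\]
that is, $\widehat{\rho}(I)$ is the supremum of the ratios $\lambda$ for which some ${\bf a}\in\SP(I)$ has $\lambda^{-1}{\bf a}$, or equivalently ${\bf a}/\lambda$, outside $\NP(I)$. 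Concretely, we aim to show
\[
\widehat{\rho}(I)=\max_{{\bf a}\in \SP(I)} \min\{t : t{\bf a}\in\NP(I)\},
\]
with the maximum taken over the finitely many vertices of $\SP(I)$. This is the reformulation in~\cite[Section~2]{DFMS19}.

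\emph{Lower bound.} First we show $\alpha(I)/\widehat{\alpha}(I)\le\widehat{\rho}(I)$; this is \GHV{}, \cite[Theorem~1.2]{GHV13}. In our language, take ${\bf a}\in\SP(I)$ achieving $\sum a_i=\widehat{\alpha}(I)$ by \Cref{thm:SPWaldschmidt}. Since $I$ is equigenerated of degree $d=\alpha(I)$, every point of $\NP(I)$ has coordinate sum at least $d$. Hence $t{\bf a}\in\NP(I)$ forces $t\ge d/\widehat{\alpha}(I)$.

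Next we show $\widehat{\rho}(I/U)\le\widehat{\rho}(I)$ for $U\neq\emptyset$. The symbolic polyhedron of $I/U=I:x^U$ is defined by the inequalities $\chi_W$ for the minimal primes $P_W$ of $I$ with $W\cap U=\emptyset$. Given a vertex ${\bf b}$ of $\SP(I/U)$, which has $b_i=0$ for $i\in U$, set ${\bf a}={\bf b}+N\chi_U$ for large $N$; then ${\bf a}\in\SP(I)$. Suppose $t{\bf a}\in\NP(I)$. Because the coordinates in $U$ are free in $\NP(I/U)$, which is the projection of $\NP(I)$ killing $U$-coordinates plus the orthant, we get $t{\bf b}\in\NP(I/U)$. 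A cleaner way to run this argument is via localization/inversion of the variables $x_U$: the containment $I^{(st)}\subseteq I^{rt}$ implies $(I:x_U^\infty)^{(st)}\subseteq (I:x_U^\infty)^{rt}$, since symbolic and ordinary powers both commute with localization at $x^U$. This directly gives $\widehat{\rho}(I/U)\le\widehat{\rho}(I)$, and we will use this localization version.

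\emph{Upper bound (main obstacle).} Take a vertex ${\bf a}$ of $\SP(I)$ achieving $\widehat{\rho}(I)=t^*$, so that $t{\bf a}\notin\NP(I)$ for $t<t^*$. There are two cases.

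In the first case, some coordinate of ${\bf a}$ is large, namely $a_i\ge 1$ for $i$ in some nonempty $U$. We will check that the generators' variables in $U$ are then irrelevant. Because $I$ is squarefree, no generator uses $x_i$ more than once, so membership in $\NP(I)$ of $t{\bf a}$ with $ta_i\ge 1$ reduces to membership of its truncation, and hence to $\NP(I/U)$. We would also need to check that the restriction of ${\bf a}$ lies in $\SP(I/U)$; this holds because the primes of $I/U$ avoid $U$. This case gives $t^*\le\widehat{\rho}(I/U)$. The delicate point is the threshold: the reduction requires $t^*a_i\ge 1$ rather than $a_i\ge1$. Accordingly, we will choose $U=\{i: t^*a_i\ge 1\}$ and argue via the supporting hyperplane of $\NP(I)$ separating $t{\bf a}$.

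In the second case, every coordinate satisfies $t^*a_i<1$. Then near $t^*{\bf a}$ the only facets of $\NP(I)$ that can be active are those not of the form $x_i\ge 1$-type. We will argue, as in~\cite[Corollary~2.24]{DFMS19}, that for an equigenerated ideal the facet of $\NP(I)$ separating $t{\bf a}$ is then the degree hyperplane $\sum x_i=d$. This yields $t^*=d/\sum a_i\le d/\widehat{\alpha}(I)$. Identifying which facet of $\NP(I)$ is active in this case is the step I expect to be hardest, and I would follow the DFMS argument adapted to squarefree equigenerated ideals.
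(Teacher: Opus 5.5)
Your lower bounds are fine; the localization argument is essentially \Cref{lem:contractioncontainment}. The upper bound, however, rests on a dichotomy that is false.

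\textbf{The failure of Case 2.} Case 2 asserts that if every coordinate of $t^*\mathbf a$ is below $1$, then the facet of $\NP(I)$ through which the ray $t\mathbf a$ enters is the degree hyperplane, so $t^*\le \alpha(I)/\widehat\alpha(I)$. Here is a counterexample.
\begin{itemize}
\item Take $I=I_1I_2$ with $I_1=I(\mathrm{U}_{2,3})$ in $x_1,x_2,x_3$ and $I_2=I(\mathrm{U}_{2,4})$ in $x_4,\dots,x_7$. This is the ideal of \Cref{ex:direct-sum}. It is squarefree and equigenerated of degree $4$, with $\SP(I)=\SP(I_1)\times\SP(I_2)$ and $\NP(I)=\NP(I_1)\times\NP(I_2)$.
\item The point $\mathbf a=(\tfrac12,\tfrac12,\tfrac12,\tfrac13,\tfrac13,\tfrac13,\tfrac13)$ is a vertex of $\SP(I)$. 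It even attains $\widehat\alpha(I)=\tfrac32+\tfrac43=\tfrac{17}{6}$, using $\widehat\alpha(I(\mathrm{U}_{2,4}))=\tfrac43$ from \Cref{prop:uniform-matroid}.
\item One has $t\mathbf a\in\NP(I)$ if and only if $t\ge\tfrac43$ (first block) and $t\ge\tfrac32$ (second block). Hence $t^*=\tfrac32=\widehat\rho(I)$.
\item Every coordinate of $t^*\mathbf a=(\tfrac34,\tfrac34,\tfrac34,\tfrac12,\tfrac12,\tfrac12,\tfrac12)$ is less than $1$.
\item Yet the binding facet is $x_4+\dots+x_7\ge 2$, not the degree hyperplane, and $t^*=\tfrac32>\tfrac{24}{17}=\alpha(I)/\widehat\alpha(I)$.
\end{itemize}
So Case 2 cannot be proved: the coordinates of $t^*\mathbf a$ are the wrong invariant.

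\textbf{The threshold in Case 1.} Case 1 also needs the strict threshold $\{i:t^*a_i>1\}$. With $\ge$ it fails already for $I=\langle x_1x_2\rangle$ and $\mathbf a=(1,1)$, where $I/U$ is the unit ideal. What your truncation argument actually proves is that facet normals of $\NP(I)$ tight at $t^*\mathbf a$ vanish on $\{i:t^*a_i>1\}$. The converse (small coordinates force full support) is exactly what the example above refutes.

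\textbf{The correct split.} Split on the support of the facet normal, not on $\mathbf a$. By \Cref{thm:asymptoticresurgencebyvaluations}, $\widehat\rho(I)=\mathbf w(I)/\widehat{\mathbf w}(I)$ for some inward facet normal $\mathbf w\ge 0$ of $\NP(I)$.
\begin{itemize}
\item If $w_i=0$ for all $i$ in a nonempty set $U$, then $\mathbf w(I/U)=\mathbf w(I)$. Also $\widehat{\mathbf w}(I/U)=\widehat{\mathbf w}(I)$: lift $\mathbf b\in\SP(I/U)$ to $\mathbf b+\sum_{i\in U}\mathbf e_i\in\SP(I)$, which does not change $\langle\mathbf w,\cdot\rangle$. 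So the ratio is at most $\widehat\rho(I/U)$.
\item If $\mathbf w$ has full support, its facet is bounded, so it lies in the convex hull of the generator exponents. By equigeneration that hull sits in $\{\sum v_i=\alpha(I)\}$. Since the facet is $(n-1)$-dimensional, $\mathbf w$ is a multiple of $\chi_{[n]}$, and the ratio is $\alpha(I)/\widehat\alpha(I)$.
\end{itemize}
This is the content of \cite[Corollary~2.24]{DFMS19}, which the paper simply cites. When the generator exponents affinely span less than $n-1$ dimensions, no full-support facet exists, and the paper supplies the remaining inequality via \GHV{}.
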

\begin{proof}
According to ~\cite[Corollary~2.24]{DFMS19}, the statement of the proposition holds if the dimension of the affine span of the exponent vectors of the minimal generators of $I$ is equal to $n-1$.  Our assumption that $I$ is equigenerated implies that the dimension of the affine span of the exponent vectors of the minimal generators of $I$ is \textit{at most} $n-1$, so we consider the case when the affine span of the exponent vectors of the generators of $I$ is strictly less than $n-1$.  In this case, \cite[Corollary~2.24]{DFMS19} tells us that $\widehat{\rho}(I)=\widehat{\rho}_c(I)$.  However, the inequality $\frac{\alpha(I)}{\widehat{\alpha}(I)}\le \widehat{\rho}(I)$ holds for any ideal, so the equality $\widehat{\rho}_c(I)=\widehat{\rho}(I)=\max\left\lbrace \frac{\alpha(I)}{\widehat{\alpha}(I)},\widehat{\rho}_c(I)\right\rbrace$ is still true in this case.
\end{proof}

We simplify \Cref{prop:rhohatdimensionreduction} by showing that it suffices to take single-element contractions.  We start with the following lemma.

\begin{Lemma}\label{lem:contractioncontainment}
Let $I\subset S$ be a squarefree monomial ideal and $s,r$ positive integers.  If $I^{(s)}\subseteq I^r$, then for any $U\subset [n]$,
\begin{enumerate}
\item $(I\backslash U)^{(s)}\subseteq (I\backslash U)^r$ and
\item $(I/U)^{(s)}\subseteq (I/U)^r$.
\end{enumerate}
\end{Lemma}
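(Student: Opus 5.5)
The plan is to describe both operations on monomials via ring maps that commute with taking ordinary and symbolic powers of squarefree monomial ideals. For deletion, let $\pi_U\colon S\to S$ be the $\KK$-algebra map sending $x_i\mapsto 0$ for $i\in U$ and fixing the other variables. For contraction, let $\sigma_U$ send $x_i\mapsto 1$ for $i\in U$ and fix the other variables. We work with $I\backslash U$ and $I/U$ as ideals in the smaller ring $T=\KK[x_i: i\notin U]$, where they are generated by $\pi_U(\mathcal{G}(I))$ and $\sigma_U(\mathcal{G}(I))$, respectively. Containments in $T$ transfer to $S$ by extension, and conversely by setting variables to $1$ or $0$, so working in $T$ loses nothing.

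\emph{Deletion.} Take a monomial $m\in(I\backslash U)^{(s)}$, with $m\in T$. The minimal primes of $I\backslash U$ are the minimal elements among the sets $P_W\cap T$ with $P_W\in \mathrm{Min}(I)$ and $W\cap U$ handled appropriately. It is cleaner to use the Alexander dual description: $P_V$ contains $I\backslash U$ exactly when $V$ meets the support of every generator of $I$ avoiding $U$. Equivalently, $P_{V\cup U}\supseteq I$. So every minimal prime $P_V$ of $I\backslash U$, with $V\subseteq [n]\setminus U$, satisfies $V\cup U\supseteq W$ for some $P_W\in\mathrm{Min}(I)$.

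Next I will show that $m\cdot (x^U)^s\in I^{(s)}$. For each $P_W\in\mathrm{Min}(I)$, either $W\cap U\neq\emptyset$, and then $(x^U)^s\in P_W^s$; or $W\subseteq[n]\setminus U$, in which case $P_W\supseteq I\backslash U$ and $P_W$ contains some minimal prime $P_V$ of $I\backslash U$, so $m\in P_V^s\subseteq P_W^s$. Hence $m(x^U)^s\in I^{(s)}\subseteq I^r$. Write $m(x^U)^s$ as a multiple of a product $g_1\cdots g_r$ of generators of $I$. Because $m$ involves no variable in $U$ and the degree in $x_i$ for $i\in U$ is only a bound, this alone does not force the $g_j$ to avoid $U$, and this step is the main obstacle.

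The fix I intend is to use $m$ alone rather than $m(x^U)^s$. Every $P_W\in\mathrm{Min}(I)$ with $W\cap U\ne\emptyset$ fails to contain $I\backslash U$ only in a way that does not matter, so I instead check directly that $m\in I^{(s)}$ fails in general. So I will argue via localization instead: set $x_i=0$ for $i\in U$, which is a ring map $\pi_U\colon S\to T$. It maps $I^r$ onto $(I\backslash U)^r$, because a product of generators survives precisely when each factor avoids $U$. I then need $(I\backslash U)^{(s)}\subseteq \pi_U(I^{(s)})$. For this I take $m\in(I\backslash U)^{(s)}$ and show $m\in I^{(s)}$ directly. If $W\subseteq [n]\setminus U$, the previous argument applies. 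If $W$ meets $U$, we have $P_W\cap T\supseteq I\backslash U$, because generators avoiding $U$ lie in $P_W$ and hence meet $W\setminus U$. So $P_{W\setminus U}$ contains a minimal prime of $I\backslash U$, giving $m\in P_{W\setminus U}^s\subseteq P_W^s$. Thus $m\in I^{(s)}\subseteq I^r$, and applying $\pi_U$, which fixes $m$, gives $m\in(I\backslash U)^r$.

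\emph{Contraction.} Here I use $\sigma_U\colon S\to T$, which is surjective and maps $I^r$ onto $(I/U)^r$. Given a monomial $m\in(I/U)^{(s)}$, I claim $m(x^U)^{s}\in I^{(s)}$. Take $P_W\in\mathrm{Min}(I)$. If $W\cap U\ne\emptyset$, the factor $(x^U)^s$ already lies in $P_W^s$. Otherwise $W\subseteq[n]\setminus U$, and $P_W\supseteq I:x^U=I/U$, so $P_W$ contains a minimal prime of $I/U$, whence $m\in P_W^s$. Therefore $m(x^U)^s\in I^{(s)}\subseteq I^r$, and applying $\sigma_U$ yields $m\in(I/U)^r$.
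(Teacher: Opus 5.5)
Your argument is correct.

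\textbf{Part (a).} Once the false start is removed, this is the paper's proof. Every $P_W\in\mathrm{Min}(I)$ contains $I\supseteq I\backslash U$, hence contains a minimal prime of $I\backslash U$. So $(I\backslash U)^{(s)}\subseteq I^{(s)}\subseteq I^r$. Killing the variables in $U$ then fixes $m$ and sends $I^r$ onto $(I\backslash U)^r$.

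Do cut the detour, though. A $P_W$ meeting $U$ does \emph{not} fail to contain $I\backslash U$; it always contains it, so your case split on $W\cap U$ is unnecessary. The sentence ``$m\in I^{(s)}$ fails in general'' contradicts what you then prove. Nothing in that step is a localization.

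\textbf{Part (b).} Here you take a genuinely different route. The paper reduces to $U=\{i\}$ by induction and establishes two identities:
\[
I^{(s)}:x_i^\infty=(I:x_i)^{(s)} \quad\text{and}\quad I^r:x_i^\infty=(I:x_i)^r .
\]
The second uses a minimal-generator argument: choose $k$ minimal and show the factors $M_j$ are squarefree. The paper then saturates $I^{(s)}\subseteq I^r$ with respect to $x_i$.

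You instead lift a monomial $m\in(I/U)^{(s)}$ to $m(x^U)^s\in I^{(s)}$ and push back down with the specialization $\sigma_U$. Applying this ring map to $m(x^U)^s=h\,g_1\cdots g_r$ gives $m\in(I/U)^r$ at once. So you handle all $U$ simultaneously and replace the paper's combinatorial step with a one-line substitution.

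The paper's route gives the two colon identities as equalities, which is marginally more information. The lemma, however, only uses one inclusion of each: $(I/U)^{(s)}\subseteq I^{(s)}:(x^U)^\infty$ and $I^r:(x^U)^\infty\subseteq(I/U)^r$. Your lift-and-specialize argument gives exactly those.
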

\begin{proof}
Suppose that $I^{(s)}\subseteq I^r$ and $U\subset [n]$.  We observe that, if $i\in U$, then $I/U=(I/\{i\})/(U\setminus\{i\})$ and $I\backslash U=(I\backslash\{i\})\backslash(U\setminus \{i\})$.  Furthermore, $I:x^U=(I:x_i): x^{U\setminus\{i\}}$.  By a simple induction, it thus suffices to prove (a) and (b) for $U=\{i\}$, for some $i\in [n]$. 

\par (a) 
Observe that $(I\backslash \{i\})^{(s)}\subseteq I^{(s)}\subseteq I^r$ and $(I^r)\backslash\{i\}=(I\backslash\{i\})^r$. For any minimal generator $m \in (I\backslash \{i\})^{(s)}$, we note that $x_i \nmid m$ and thus $m\in (I^r)\backslash\{i\}$ and the conclusion now follows.

\par  (b) Since $I^{(s)}\subseteq I^r$, then $I^{(s)}:x_i^\infty\subseteq I^r:x_i^\infty$. Hence it suffices to prove that $I^{(s)}:x_i^\infty=(I:x_i)^{(s)}$ and $I^r:x_i^\infty=(I:x_i)^r$.  Write $I=\bigcap\limits_{P_B\in\mbox{Min}(I)}P_B$.  Then, $I^{(s)}=\bigcap\limits_{P_B\in\mbox{Min}(I)}P_B^s$ and since taking colons commutes with intersections, then $I^{(s)}:x_i^\infty=(I:x_i)^{(s)}$.

Now, suppose that $m\in I^r:x_i^\infty$ is a minimal generator, so that $x_i$ does not divide $m$.  If $m\in I^r$, then $m\in (I:x_i)^r$ and we are done.  So, let us assume that $m\notin I^r$.  It follows that, for some $k\ge 1$, $mx_i^k\in I^r$.  Let $k$ be chosen minimally, so that $mx_i^{k-1}\notin I^r$.  Then, $mx_i^k=\prod\limits_{j=1}^r M_j$, where $M_1,\ldots,M_r\in I$.  If $x_s^2|M_t$ for some $1\le s\le n$, $s\neq i$, and some $t\in [r]$, then
\[
(m/x_s)x_i^k=(M_t/x_s)\prod_{j\neq t}M_j,
\]
and so $m/x_s\in I^r:x_i^\infty$ (since $M_t/x_s\in I$ as $I$ is squarefree), contradicting the minimality of $m$.  If $x_i^2|M_t$ for some $t\in [n]$ then
\[
mx_i^{k-1}=(M_t/x_i)\prod_{j\neq t}M_j.
\]
Since $M_t/x_i\in I$, $mx_i^{k-1}\in I^r$, contradicting the minimality of $k$.  It follows that $M_1,\ldots,M_r$ are all squarefree, and so $k\le r$ and we may assume without loss that $x_i|M_t$ for $t=1,\ldots,k$.  So
\[
m=\prod_{j=1}^k (M_j/x_i)\cdot \prod_{j=k+1}^r M_j,
\]
and in particular $m\in (I:x_i)^r$.  Thus $I^r:x_i^\infty\subseteq (I:x_i)^r$.

Now, suppose $m\in (I:x_i)^r$ and write $m=\prod\limits_{j=1}^r N_j$, where $N_1,\ldots,N_r\in I:x_i$.  Then,
\[
mx_i^r=\prod_{j=1}^r (N_jx_i)\in I^r.
\]
Hence, $(I:x_i)^r=I^r:x_i^\infty$.
\end{proof}

\begin{Corollary}\label{cor:resurgencecontraction}
Let $I\subset S$ be a squarefree monomial ideal.  Then, for any $U\subset [n]$,  \[\rho(I)\ge\max\{\rho(I/U),\rho(I\backslash U)\} \text{ and } \widehat{\rho}(I)\ge \max\{\widehat{\rho}(I/U),\widehat{\rho}(I\backslash U)\}.\] 
\end{Corollary}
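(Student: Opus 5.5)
The plan is to derive the corollary directly from \Cref{lem:contractioncontainment} by unwinding the definitions of $\rho$ and $\widehat{\rho}$. Fix $U\subset[n]$ and write $J$ for either $I/U$ or $I\backslash U$. Both are again squarefree monomial ideals: contraction sets variables to one in squarefree generators, and deletion keeps a subset of the generators. The contrapositive of \Cref{lem:contractioncontainment} says that for all positive integers $s,r$,
\[
J^{(s)}\not\subseteq J^r \;\Longrightarrow\; I^{(s)}\not\subseteq I^r.
\]

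For the resurgence, I will show that the set whose supremum defines $\rho(J)$ is contained in the set whose supremum defines $\rho(I)$. If $s/r$ is a ratio with $J^{(s)}\not\subseteq J^r$, then the display gives $I^{(s)}\not\subseteq I^r$, so $s/r$ also occurs in the set for $\rho(I)$. Taking suprema gives $\rho(J)\le\rho(I)$.

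For the asymptotic resurgence the argument is the same, applied at each $t$. Suppose $s/r$ is a ratio with $J^{(st)}\not\subseteq J^{rt}$ for all $t\gg0$. Applying the display with $(st,rt)$ in place of $(s,r)$ shows that $I^{(st)}\not\subseteq I^{rt}$ for the same range of $t$. Hence $s/r$ lies in the set defining $\widehat{\rho}(I)$, and so $\widehat{\rho}(J)\le\widehat{\rho}(I)$.

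Since $J$ was either $I/U$ or $I\backslash U$, both bounds hold for each, and taking maxima gives both claimed inequalities.

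Two degenerate cases need a brief remark. If $J$ is the zero ideal (for example, a deletion that removes every generator) or the unit ideal (for example, a contraction that produces the generator $1$), the defining sets may be empty. With the usual conventions these contribute the trivial lower value, so the inequality still holds. I expect this bookkeeping to be the only point needing care; the rest is a direct consequence of the lemma.
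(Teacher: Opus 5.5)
Your proposal is correct and follows essentially the same route as the paper. Both take the contrapositive of \Cref{lem:contractioncontainment}, show that the set of ratios defining $\rho(I/U)$ (respectively $\rho(I\backslash U)$, $\widehat{\rho}$, via the pairs $(st,rt)$) is contained in the corresponding set for $I$, and then take suprema.
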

\begin{proof}
Let $U\subset [n]$.  It suffices to show $\rho(I)\ge \rho(I/U)$, $\widehat{\rho}(I)\ge \widehat{\rho}(I/U)$, $\rho(I)\ge \rho(I\backslash U)$, and $\widehat{\rho}(I)\ge \widehat{\rho}(I\backslash U)$.  We show that $\rho(I)\ge \rho(I/U)$ -- the remaining inequalities are proved in the same fashion.  Recall that $\rho(I)=\sup\left\{\frac{s}{r}~:~ s,r \in\NN \mbox{ and } I^{(s)}\not\subset I^r\right\}$.  If $(I/U)^{(s)}\not\subset (I/U)^r$, then by  \Cref{lem:contractioncontainment}, $I^{(s)}\not\subset I^r$.  So, $\left\{\frac{s}{r}~:~s,r \in \NN \text{ and } (I/U)^{(s)}\not\subset (I/U)^r\right\}\subseteq \left\{\frac{s}{r}~:~s,r \in \NN \text{ and } I^{(s)}\not\subset I^r\right\}$ and thus $\rho(I/U)\le \rho(I)$.
\end{proof}

\begin{Corollary}\label{cor:rhosinglecontraction}
Let $I\subset S$ be an equigenerated squarefree monomial ideal.  Then,
\[
\widehat{\rho}(I)=\max\left\lbrace \frac{\alpha(I)}{\widehat{\alpha}(I)},\widehat{\rho}_{c,1}(I)\right\rbrace
,\]
where $\widehat{\rho}_{c,1}(I)$ is as in \Cref{def:rhocontraction}.
\end{Corollary}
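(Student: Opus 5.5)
We will derive the statement from \Cref{prop:rhohatdimensionreduction}. By that proposition, $\widehat{\rho}(I)=\max\{\alpha(I)/\widehat{\alpha}(I),\widehat{\rho}_c(I)\}$, so it suffices to show that
\[
\max\left\{\frac{\alpha(I)}{\widehat{\alpha}(I)},\widehat{\rho}_c(I)\right\}=\max\left\{\frac{\alpha(I)}{\widehat{\alpha}(I)},\widehat{\rho}_{c,1}(I)\right\}.
\]

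For the inequality $\ge$, note that every $i$ with $I:x_i\neq I$ gives a nonempty $U=\{i\}$. Hence $\widehat{\rho}_{c,1}(I)\le\widehat{\rho}_c(I)$, using the convention that the maximum over an empty set is $0$ or $-\infty$, which causes no harm.

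For $\le$, fix a nonempty $U\subset[n]$. We show that $\widehat{\rho}(I/U)$ is at most the right-hand side.

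\textbf{Case 1: $I:x_i=I$ for every $i\in U$.} Then $I:x^U=I$, obtained by iterating colons one variable at a time. So $I/U=I$ and $\widehat{\rho}(I/U)=\widehat{\rho}(I)$. This is circular, so we need a different argument here. Instead, observe that $I:x_i=I$ means $x_i$ divides no minimal generator of $I$ (since $I$ is squarefree, a generator divisible by $x_i$ would give $m/x_i\in I:x_i\setminus I$). Consequently $I$ lives in a smaller polynomial ring, and contracting by such $i$ changes nothing.

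\textbf{Case 2: some $i\in U$ has $I:x_i\neq I$.} Write $I/U=(I/\{i\})/(U\setminus\{i\})$, as in the proof of \Cref{lem:contractioncontainment}. Then \Cref{cor:resurgencecontraction}, applied to the squarefree ideal $I/\{i\}$, gives
\[
\widehat{\rho}(I/U)\le\widehat{\rho}(I/\{i\})\le\widehat{\rho}_{c,1}(I).
\]

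So the only problematic $U$ are those consisting entirely of variables not appearing in $\mathcal{G}(I)$. For these, $I/U=I$ and the term $\widehat{\rho}_c(I)\ge\widehat{\rho}(I)$ is tautological. The main obstacle is therefore to show that, when computing $\widehat{\rho}(I)$ via \Cref{prop:rhohatdimensionreduction}, such trivial $U$ contribute nothing beyond the other terms.

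To handle this, first pass to the ring $S'$ generated by only the variables that appear in $\mathcal{G}(I)$. Asymptotic resurgence, $\alpha$, and $\widehat{\alpha}$ are unchanged by this extension of the polynomial ring: symbolic and ordinary powers commute with the flat extension $S'\to S$, and containments are detected in $S'$. Then apply \Cref{prop:rhohatdimensionreduction} in $S'$, where every nonempty $U$ falls under Case 2. The contractions are compatible with the ring change, and $\widehat{\rho}_{c,1}$ is defined identically in $S$ and $S'$. Together these give $\widehat{\rho}(I)=\max\{\alpha(I)/\widehat{\alpha}(I),\widehat{\rho}_{c,1}(I)\}$.

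If $I$ has no nontrivial variables at all (for example $I=S$ or $I=0$), the statement is degenerate and we exclude or handle it trivially.
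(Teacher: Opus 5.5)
Your proposal is correct and follows essentially the same route as the paper: when every variable divides some minimal generator, you bound each $\widehat{\rho}(I/U)$ by $\widehat{\rho}(I/\{i\})$ via $I/U=(I/\{i\})/(U\setminus\{i\})$ and \Cref{cor:resurgencecontraction}, then invoke \Cref{prop:rhohatdimensionreduction}; in general, you first pass to the polynomial ring in the variables that actually appear in $\mathcal{G}(I)$, which leaves $\widehat{\rho}$, $\alpha$, $\widehat{\alpha}$, and $\widehat{\rho}_{c,1}$ unchanged. Your separate ``Case 1'' becomes unnecessary once you do that ring reduction first, but it does no harm.
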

\begin{proof}
We first consider the case that every variable appears in some minimal generator of $I$, so $I:x_i\neq I$ for any $1\le i\le n$.  In this case, $\widehat{\rho}_{c,1}(I)=\max\{\widehat{\rho}(I/\{i\})~:~ i\in [n]\}$.  Now, if $U\subset [n]$ and $i\in U$, then $I/U=(I/\{i\})/(U\setminus\{i\})$ and $\widehat{\rho}(I/U)\le \widehat{\rho}(I/\{i\})$ by \Cref{cor:resurgencecontraction}.  Hence, $\widehat{\rho}(I/U)\le\widehat{\rho}_{c,1}(I)$ and the result follows from \Cref{prop:rhohatdimensionreduction}.

For the general case, let $U=\{i\in [n] \mid I:x_i=I\}$.  None of the variables $\{x_j:j\in U\}$ appear in a minimal generator of $I$.  Consider the polynomial ring $\KK[x_i:i\in [n]\setminus U]$ and the ideal $J$ generated by the same minimal generators as $I$, but in the smaller polynomial ring.  From the first part of the proof,
\[
\widehat{\rho}(J)=\max\left\lbrace \frac{\alpha(J)}{\widehat{\alpha}(J)},\widehat{\rho}_{c,1}(J)\right\rbrace.
\]
It is immediate from the definitions that $\widehat{\rho}_{c,1}(I)=\widehat{\rho}_{c,1}(J)$.  One can check that $I^{(s)}\subset I^r$ if and only if $J^{(s)}\subset J^r$ for any  $s,r \in \NN$, which implies that $\widehat{\rho}(I)=\widehat{\rho}(J)$.  Moreover, $\alpha(I^{(s)})=\alpha(J^{(s)})$ for all $s\ge 1$, so
\[
\widehat{\rho}(I)=\widehat{\rho}(J)=\max\left\lbrace \frac{\alpha(J)}{\widehat{\alpha}(J)},\widehat{\rho}_{c,1}(J)\right\rbrace=\max\left\lbrace \frac{\alpha(I)}{\widehat{\alpha}(I)},\widehat{\rho}_{c,1}(I)\right\rbrace.\qedhere
\]
\end{proof}

\subsection{Asymptotic resurgence for facet ideals of matroids via contraction}\label{sec:asym-res-via-contr-matroid}
Let $M$ be a matroid on the ground set $E$. The \textit{rank} function of $M$ is a function $\rk_M:2^E\to \ZZ_{\ge 0}$ taking each subset $A\subseteq  E$ to a non-negative integer $\rk_M(A)$ defined by $\rk_M(A)=\max\left\{|A\cap B|~:~ B\in \cB(M)\right\}$. The rank of the entire ground set, $\rk_M(E)$, is called the \textit{rank of the matroid}, which we denote by $\rk(M)$.  Note that $\rk(M)$ is the common cardinality of every basis of $M$.  To avoid trivialities, we will always assume that $\rk(M)<|E|$.

Given a subset $A\subseteq  E$, the \textit{closure} of $A$ in  $M$, written $\mbox{cl}_M(A)$, is defined as the maximal set $F$ under inclusion, which contains $A$ and has the same rank as $A$. A  subset $A\subseteq  E$  is called a \textit{flat} of the matroid if $\mbox{cl}_M(A)=A$. We denote the flats of $M$ by $\cL(M)$ and the flats of rank $k$ by $\cL_k(M)$.  The set $\cL(M)$ has the structure of a geometric lattice, where the meet and join operations for two flats $F_1, F_2\in \cL(M)$ are, respectively, $F_1\cap F_2$ and $\mbox{cl}_M(F_1\cup F_2)$.

A \textit{loop} of a matroid $M$ on the ground set $E$ is an element $e\in E$ which is not contained in any basis of $M$.  We write $\loops(M)$ for the collection of all loops of a matroid.

We now recall the operations of contraction and deletion for matroids.  Let $U\subset E$ be a nonempty set. The {\it restriction of $M$ to $U$}, denoted as $M|U$, is a matroid of rank $\rk_M(U)$ on the ground set $U$. The independent sets in  $M|U$ are the independent sets of $M$ that are subsets of $U$. The {\it contraction of $U$ from $M$}, denoted as $M/U$, is a matroid of rank $k - \rk_M(U)$ on the ground set $E \setminus U$ (see \cite[Chapter 3]{Oxley2011})  whose bases are
\[
\cB(M/U) = \{ B \subseteq E \setminus U ~:~ \text{ there exists a basis } B' \text{ of } M|U \text{ such that } B \cup B' \in \cB(M) \}.
\]

\begin{Lemma}\label{lem:quotient-subset}
Let $M$ be a matroid on the ground set $E$, and let $U$ be a nonempty proper subset of $E$. Then, \( I(M/U)=I(M):x^U = I(M):x^{\mathrm{cl}_M(U)} \).
\end{Lemma}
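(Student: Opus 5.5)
We compare minimal monomial generators directly. Since $I(M)$ is squarefree, $I(M):x^U=I(M)/U$ is generated by the monomials $x^{B\setminus U}$ for $B\in\cB(M)$. This colon ideal, viewed in $S$, is generated by all $x^{B\setminus U}$, and the minimal ones are those where $B\setminus U$ is inclusion-minimal. So the first equality reduces to a combinatorial claim: the inclusion-minimal members of $\{B\setminus U : B\in\cB(M)\}$ are exactly the bases of $M/U$. The second equality will then follow by applying the same description to $\mathrm{cl}_M(U)$ and checking that $M/U$ and $M/\mathrm{cl}_M(U)$ have the same bases (their ground sets differ only by loops of $M/U$).

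For the combinatorial claim, we use the standard fact that $|B\cap U|\le \rk_M(U)$ for every basis $B$, with equality exactly when $B\cap U$ is a basis of $M|U$. We argue in two steps.

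First, every basis of $M/U$ occurs as some $B\setminus U$. Indeed, if $B_0\in\cB(M/U)$, there is a basis $B'$ of $M|U$ with $B_0\cup B'\in\cB(M)$, and then $(B_0\cup B')\setminus U=B_0$.

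Second, every $B\setminus U$ contains a basis of $M/U$. Extend the independent set $B\cap U$ to a basis $B'$ of $M|U$. Then augment $B'$ from $B$ using the independence axioms until it becomes a basis $B''$ of $M$. The added elements lie in $B$, and none of them lie in $U$ because $B'$ is already maximal independent in $U$. Hence $B''\setminus U\subseteq B\setminus U$ and $B''\cap U=B'$, so $B''\setminus U\in\cB(M/U)$.

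Since all bases of $M/U$ have the same size $k-\rk_M(U)$, none properly contains another. Combining the two steps, the minimal elements of $\{B\setminus U\}$ are precisely $\cB(M/U)$. This gives $I(M):x^U=I(M/U)$, where the monomials of $I(M/U)$ are viewed in $S$ and avoid the variables indexed by $U$.

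For the second equality, put $F=\mathrm{cl}_M(U)$; by the same argument $I(M):x^F=I(M/F)$. It remains to show $\cB(M/U)=\cB(M/F)$.

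\begin{itemize}
\item Each basis $B_0$ of $M/U$ avoids $F\setminus U$. Otherwise $B_0\cup B'$ would contain an element $e\in \mathrm{cl}_M(U)\setminus U$ together with the basis $B'$ of $M|U$, and $B'\cup\{e\}$ is dependent, contradicting $B_0\cup B'\in\cB(M)$.
\item A basis $B'$ of $M|U$ is also a basis of $M|F$, because $\rk_M(F)=\rk_M(U)$. So $B_0\in \cB(M/F)$.
\item Conversely, if $B_0\cup B'_F\in\cB(M)$ with $B'_F$ a basis of $M|F$, then $\mathrm{cl}_M(B'_F)=F\supseteq U$. Choose a basis $B'$ of $M|U$; it spans $F$, so $B_0\cup B'$ spans $M$ and has size $k$, hence is a basis of $M$. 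Thus $B_0\in\cB(M/U)$.
\end{itemize}

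Therefore $I(M/U)=I(M/F)$.

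The main obstacle is the second step of the combinatorial claim: that each $B\setminus U$ contains a basis of $M/U$, i.e., that the non-minimal generators coming from bases $B$ with $|B\cap U|<\rk_M(U)$ are redundant. The augmentation argument handles this, but it needs care to ensure that the elements added to $B'$ avoid $U$.
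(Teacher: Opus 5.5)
Your proof is correct and follows essentially the same route as the paper: describe the generators of $I(M):x^U$ combinatorially, match them with $\mathcal{B}(M/U)$, and then show that $M/U$ and $M/\mathrm{cl}_M(U)$ have the same bases. Your augmentation argument (each $B\setminus U$ contains a basis of $M/U$) justifies the step the paper's chain of equalities leaves implicit, namely discarding the redundant generators. You also prove $\mathcal{B}(M/U)=\mathcal{B}(M/\mathrm{cl}_M(U))$ directly, where the paper cites an exercise in Oxley.
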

\begin{proof}
Let \( U \) be a nonempty proper subset of \( E \).  Notice that $I(M):x^U$ is a squarefree monomial ideal as $I(M)$ is a squarefree monomial ideal. Now, by definition,   \begin{align*}
I(M): x^U &= \langle x^W ~:~ W \cup U \text{ contains a basis of } M \rangle \\ & = \langle x^W ~:~ W \subseteq E \setminus U, W \in \mathcal{I}(M) \text{ and } W \cup U \text{ contains a basis of } M \rangle \\ & = \langle x^W ~:~ W \subseteq E \setminus U, W \in \mathcal{I}(M) \text{ and } M|U \text{ has a basis } B \text{ such that } W \cup B \in \mathcal{B}(M) \rangle \\ & = \langle x^W ~:~ W \in \mathcal{B}(M/U) \rangle \\ &= I(M/U). 
   \end{align*}
The final equality follows from the fact that $M/U$ and $M/\mathrm{cl}_M(U)$ have the same bases.  In fact,  $M/U$ is obtained from  $M/\mathrm{cl}_M(U)$ by adding a collection of loops that correspond to the elements of $\mathrm{cl}_M(U)$ that are not in $U$, see~\cite[Section~3.1, Exercise~8]{Oxley2011}. \end{proof}

The \textit{deletion of $U$ from $M$} is the matroid of rank $k$ on the ground set $E\setminus U$ \cite[Chapter 3]{Oxley2011} whose bases are
\[
\cB(M\backslash U)=\{B\in\cB(M)~:~ B\cap U=\emptyset\}.
\]

\begin{Remark}
We have chosen notation so that $I(M\backslash U)=I(M)\backslash U$ and $I(M/U)=I(M)/U$.  The first of these equalities follows immediately from the definitions, and the second follows from \Cref{lem:quotient-subset}. 
\end{Remark}

\begin{Proposition}\label{prop:asym-single-contraction-matroid}
For a matroid $M$ we have
\[
\widehat{\rho}_{c,1}(I(M))=\max\limits_{F\in \cL_1(M)}\widehat{\rho}(I(M/F))
\quad
\mbox{and}
\quad
\widehat{\rho}(I(M))=\max\left\lbrace \frac{{\rk(M)}}{\widehat{\alpha}(I(M))},\widehat{\rho}_{c,1}(I(M))\right\rbrace
.\]
\end{Proposition}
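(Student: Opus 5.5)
The plan is to deduce both equalities from \Cref{cor:rhosinglecontraction} and \Cref{lem:quotient-subset}, with only matroid bookkeeping in between. The second equality follows at once from \Cref{cor:rhosinglecontraction}. The facet ideal $I(M)$ is equigenerated and squarefree, with all generators $x^B$ of degree $|B|=\rk(M)$, so $\alpha(I(M))=\rk(M)$. Substituting this into \Cref{cor:rhosinglecontraction} gives $\widehat{\rho}(I(M))=\max\{\rk(M)/\widehat{\alpha}(I(M)),\widehat{\rho}_{c,1}(I(M))\}$. So the real content is the first equality.

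For the first equality, I first identify which indices $i\in E$ are allowed in \Cref{def:rhocontraction}, namely those with $I(M):x_i\neq I(M)$. A variable $x_i$ appears in no minimal generator exactly when $i$ lies in no basis, that is, when $i$ is a loop. If $i$ is a loop then $I(M):x_i=I(M)$. Conversely, if $i$ is not a loop, choose a basis $B\ni i$. Then $x^{B\setminus\{i\}}\in I(M):x_i$, but this monomial has degree $\rk(M)-1$, so it is not in $I(M)$. Hence the admissible indices are exactly the non-loops, and $\widehat{\rho}_{c,1}(I(M))=\max_{i\notin \loops(M)}\widehat{\rho}(I(M)/\{i\})$.

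Next I regroup these indices by rank-one flats. The rank-one flats are exactly the sets $\mathrm{cl}_M(\{i\})$ for non-loops $i$, and every $F\in\cL_1(M)$ arises this way. By \Cref{lem:quotient-subset}, $I(M)/\{i\}=I(M):x_i=I(M):x^{\mathrm{cl}_M(\{i\})}=I(M/\mathrm{cl}_M(\{i\}))$. Thus each term $\widehat{\rho}(I(M)/\{i\})$ equals $\widehat{\rho}(I(M/F))$ with $F=\mathrm{cl}_M(\{i\})$, and every $F\in\cL_1(M)$ occurs. The two maxima therefore coincide.

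I do not expect a genuine obstacle. The one point needing care is that \Cref{lem:quotient-subset} requires $U$ to be a nonempty proper subset. This holds for $U=\{i\}$ because we assume $\rk(M)<|E|$, which forces $|E|\ge 2$. If some $F=\mathrm{cl}_M(\{i\})$ were all of $E$, the contraction $M/F$ would have empty ground set. Such an $F$ can only be a rank-one flat when $\rk(M)=1$, and I would treat that degenerate case separately or note that the identity $I(M):x_i=I(M):x^{F}$ still holds by the argument in the lemma's proof.
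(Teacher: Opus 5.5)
Your proposal is correct and follows the paper's proof essentially step for step. You identify the admissible indices in $\widehat{\rho}_{c,1}$ as the non-loops, regroup them by the rank-one flats $\mathrm{cl}_M(\{e\})$ via \Cref{lem:quotient-subset}, and read off the second equality from \Cref{cor:rhosinglecontraction} using $\alpha(I(M))=\rk(M)$. Your explicit check that $I(M):x_i\neq I(M)$ exactly for non-loops, and your remark on the proper-subset hypothesis in the degenerate rank-one case, fill in details the paper leaves as ``observe''.
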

\begin{proof}
Let $M$ be a matroid on the ground set $E=[n]$. Observe that $\loops(M)=\{i\in [n]\mid I(M):x_i=I(M)\}$. 
By \Cref{def:rhocontraction}, $\widehat{\rho}_{c,1}(I(M))=\max\{\widehat{\rho}(I(M/e))~:~e\in E \setminus \loops(M)\}$.  For all $e \in E \setminus \loops(M),$ there is a flat $F \in \cL_1(M)$ so that $\text{cl}\{e\} = F$. Also, for every $F \in \cL_1(M)$, there is an $e \in F\setminus \loops(M)$ so that $\text{cl}\{e\} = F$. By \Cref{lem:quotient-subset}, $I(M/e)=I(M/\text{cl}\{e\})$, which implies that $\widehat{\rho}(I(M/e))=\widehat{\rho}(I(M/\text{cl}\{e\}))$. Thus,  \begin{align*}\widehat{\rho}_{c,1}(I(M))&=\max\{\widehat{\rho}(I(M/e))~:~e\in E \setminus \loops(M)\}\\&=\max\{\widehat{\rho}(I(M/\text{cl}\{e\}))~:~e\in E \setminus \loops(M) \}\\&=\max\{\widehat{\rho}(I(M/F))~:~F\in \cL_1(M) \}.\end{align*} Observe that $\alpha(I(M))=\rk(M)=k$, so the second part follows immediately from \Cref{cor:rhosinglecontraction}.\end{proof}

In our next result, we show that, in order to compute the asymptotic resurgence of $I(M)$, it suffices to compute the Waldschmidt constant of the facet ideals of all contractions of $M$. We will identify $M/(\text{cl}\{\emptyset\})$ with $M$ throughout this article.

\begin{Theorem}\label{thm:asym-res-facet}
Let $M$ be a matroid of rank $k$ on the ground set $E$ of size $n$.  Then, 
\[
\widehat{\rho}(I(M))= \max\limits_{U \subseteq E,~\rk_M(U)<k}\left\{ \frac{k-\rk_M(U)}{\widehat{\alpha}(I(M/U))}\right \} 
= \max\limits_{U \in \mathcal{I}(M),~|U|<k}\left\{ \frac{k-|U|}{\widehat{\alpha}(I(M/U))} \right \}
= \max\limits_{F \in  \cL(M) \setminus \{E\}}\left\{ \frac{k-\rk_M(F)}{\widehat{\alpha}(I(M/F))}  \right \},
\] 
where $\mathcal{I}(M)$ is the set of independent sets of $M$ and $\cL(M)$ is the lattice of flats of $M$.
\end{Theorem}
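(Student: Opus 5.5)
We prove the first equality by strong induction on $k=\rk(M)$, using \Cref{prop:asym-single-contraction-matroid} as the recursive step. Denote the right-hand side of the first expression by $R(M)$.

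\emph{Base case.} If $k=1$, every flat $F\in\cL_1(M)$ has full rank. Hence every contraction $M/F$ has rank $0$, so $I(M/F)=S$, which has no resurgence contribution; such $U$ are also excluded from $R(M)$. Thus the only relevant $U$ are those with $\rk_M(U)=0$, i.e.\ subsets of $\loops(M)$. For these, $I(M/U)=I(M)$ by \Cref{lem:quotient-subset}, since $\mathrm{cl}_M(U)=\mathrm{cl}_M(\emptyset)$. Both sides therefore equal $1/\widehat{\alpha}(I(M))$. Strictly, the convention $\widehat{\rho}_{c,1}=0$ or $-\infty$ when the set is empty should be fixed so that \Cref{prop:asym-single-contraction-matroid} reads $\widehat{\rho}(I(M))=k/\widehat{\alpha}(I(M))$.

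\emph{Inductive step.} Suppose $k\ge 2$. By \Cref{prop:asym-single-contraction-matroid},
\[\widehat{\rho}(I(M))=\max\Big\{\tfrac{k}{\widehat{\alpha}(I(M))},\ \max_{F\in\cL_1(M)}\widehat{\rho}(I(M/F))\Big\}.\]
Contractions $M/F$ with $F$ a rank-one flat have rank $k-1$, so the induction hypothesis gives
\[\widehat{\rho}(I(M/F))=\max\Big\{\tfrac{k-1-\rk_{M/F}(V)}{\widehat{\alpha}(I((M/F)/V))}~:~V\subseteq E\setminus F,\ \rk_{M/F}(V)<k-1\Big\}.\]
We use the standard identities $(M/F)/V=M/(F\cup V)$ and $\rk_{M/F}(V)=\rk_M(F\cup V)-1$. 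With these, each term becomes $\tfrac{k-\rk_M(U)}{\widehat{\alpha}(I(M/U))}$ where $U=F\cup V$, subject to $\rk_M(U)<k$.

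It remains to check that the sets $U$ arising this way, together with those with $\rk_M(U)=0$ (which give the term $k/\widehat{\alpha}(I(M))$ by \Cref{lem:quotient-subset}), exhaust all $U$ with $\rk_M(U)<k$. Any $U$ of positive rank contains a non-loop $e$. Set $F=\mathrm{cl}\{e\}$. Then $I(M/U)=I(M/(U\cup F))$ by \Cref{lem:quotient-subset}, and $\rk_M(U\cup F)=\rk_M(U)$. So the term for $U$ coincides with the term for $F\cup(U\setminus F)$. This proves $\widehat{\rho}(I(M))=R(M)$.

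\emph{The other two expressions.} By \Cref{lem:quotient-subset}, both $I(M/U)$ and $\rk_M(U)$ depend only on $\mathrm{cl}_M(U)$. Hence the maximum over all $U$ equals the maximum over flats $F\neq E$; here $\rk_M(F)<k$ is equivalent to $F\neq E$ since flats of rank $k$ equal $E$. Similarly, every $U$ contains an independent set $U'$ with $\mathrm{cl}(U')=\mathrm{cl}(U)$ and $|U'|=\rk_M(U)$. This gives the middle expression, in which $U=\emptyset$ recovers the $k/\widehat{\alpha}(I(M))$ term.

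\emph{Main obstacle.} The bookkeeping in the inductive step is the main obstacle. Specifically, one must verify the contraction identities at the level of facet ideals, namely $I((M/F)/V)=I(M)/(F\cup V)$, via \Cref{lem:quotient-subset} and the colon identity $(I:x^F):x^V=I:x^{F\cup V}$ from the proof of \Cref{lem:contractioncontainment}. One must also handle degenerate cases, such as contractions of rank zero and loops, cleanly.
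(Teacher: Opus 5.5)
Your proof is correct and takes essentially the paper's route: induction on rank via \Cref{prop:asym-single-contraction-matroid}, identifying the terms of $\widehat{\rho}(I(M/F))$ for rank-one flats $F$ with the terms for sets containing $F$, and letting the rank-zero sets (subsets of $\loops(M)$) supply the $k/\widehat{\alpha}(I(M))$ term; the only difference is that you induct over arbitrary subsets and reduce to flats and independent sets at the end, whereas the paper does that reduction first and inducts over flats. In the base case you don't need any convention for $\widehat{\rho}_{c,1}$ (which in rank one is not an empty maximum but involves the unit ideal $I(M/F)=S$): simply observe, as the paper does, that $I(M)$ is generated by variables, so $\widehat{\rho}(I(M))=1=1/\widehat{\alpha}(I(M))$.
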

\begin{proof}
Let $U \subseteq E$ so that $\rk_M(U)<k$.  Let $V\in\cI(M)$ and $F\in \cL(M)$ so that $V \subseteq U \subseteq F$  and $\rk_M(U)=\rk_M(V)=\rk_M(F)$.  By \Cref{lem:quotient-subset}, $I(M/U)=I(M/V)=I(M/F)$.   Since $V$ is an independent set then $\rk_M(V)=|V|$.  Thus, we have 
\[
\max\limits_{U \subset E,~\rk_M(U)<k} \left\{ \frac{k-\rk_M(U)}{\widehat{\alpha}(I(M/U))}\right \} = \max\limits_{U \in \mathcal{I}(M),|U|<k}\left\{ \frac{k-|U|}{\widehat{\alpha}(I(M/U))} \right \}= \max\limits_{F \in  \mathcal{L}(M) \setminus \{E\}}\left\{ \frac{k-\rk_M(F)}{\widehat{\alpha}(I(M/F))}  \right \}.
\]
We now prove by induction on the rank of $M$ that
\[ 
\widehat{\rho}(I(M))= \max\limits_{F \in  \mathcal{L}(M) \setminus \{E\}}\left\{ \frac{k-\rk_M(F)}{\widehat{\alpha}(I(M/F))}  \right \}.
\]
Suppose \(\rk(M) = 1\).  Then $I(M)$ is generated by a subset of the variables so $\widehat{\rho}(I(M))=1=\frac{\rk(M)}{\widehat{\alpha}(I(M))}$, as claimed.  Now suppose $\rk(M)=k>1$, and that the statement holds for every matroid of rank $k-1$. Let $F\in \cL_1(M)$, so $\rk(M/F)=k-1$, and for $0 \le i \le k-1$, $\cL_{i}(M/F)=\{G \setminus F~:~ G\in \cL_{i+1}(M) \text{ and } F \subseteq G\}$.  By induction,
\[
\widehat{\rho}(I(M/F))=\max_{G\in \cL(M/F)\setminus \{E\setminus F\}}\left\lbrace \frac{k-1-\rk_{M/F}(G)}{\widehat{\alpha}(I((M/F)/G))}\right\rbrace=\max_{G\in \mathcal{L}(M)\setminus \{E\},F\subseteq G}\left\lbrace \frac{k-\rk_M(G)}{\widehat{\alpha}(I(M/G))}\right\rbrace.
\]   
Using \Cref{def:rhocontraction} and \Cref{prop:asym-single-contraction-matroid}, it follows that
\[
\widehat{\rho}_{c,1}(I(M))=\max\limits_{F\in \cL_1(M)}\widehat{\rho}(I(M/F))=\max_{F\in \cL(M)\setminus \{E\}, \rk(F)\ge 1}\left\lbrace \frac{k-\rk_M(F)}{\widehat{\alpha}(I(M/F))}\right\rbrace.
\]
Now suppose $\rk_M(F)=0$.  Then $F=\loops(M)$ so $k-\rk_M(F)=k$ and $M/F=M\setminus \loops(M)$. Observe that $I(M\setminus\loops(M))$ is generated by the same minimal generators as $I(M)$.  Hence, 
$
\frac{k-\rk_M(F)}{\widehat{\alpha}(I(M/F)}=\frac{k}{\widehat{\alpha}(I(M))}
$
and now the theorem follows by \Cref{prop:asym-single-contraction-matroid}. \end{proof}

\begin{Remark}\label{rmk:MatroidPolytope}
In light of \Cref{thm:asymptoticresurgencebyvaluations}, \Cref{thm:asym-res-facet} suggests that the Newton polyhedron $\NP(I(M))$ has a particularly nice structure.  This is indeed the case.  One can show that $\NP(I(M))$ is defined by inequalities that arise from the \textit{matroid polytope} of $M$ introduced in~\cite{GGM87}.  The matroid polytope of $M$ is the convex hull of the indicator vectors of the bases of $M$ (i.e. the convex hull of the exponent vectors of the minimal generators of $I(M)$), and its defining inequalities are well-known.  From these, one can deduce that the defining inequalities for the Newton polyhedron $\NP(I(M))$ of a rank $k$ matroid $M$ are of the form $\langle \chi_{E\setminus F}, \mathbf{a}\rangle\ge k-\rk_M(F)$ for all flats $F\in\cL(M)$.  Thus, the inward pointing normals to facets of $\NP(I)$ are contained among the indicator vectors of complements of flats of $M$.  Following this line of reasoning, \Cref{thm:asym-res-facet} can be proved using \Cref{thm:asymptoticresurgencebyvaluations}.  A similar phenomenon happens for the asymptotic resurgence of edge ideals of graphs -- see \cite[Remark~3.12]{DFMS19}.
\end{Remark}

\section{Equality and inequality in \GHV{}}\label{sec:GHVbound}

In this section, we set up a useful perspective for the remainder of the paper by posing the following question and answering it in a few cases.

\begin{Question}\label{ques:rhohat=alpha/alphahat}
Which matroids have a facet ideal which achieves equality in \GHV{}?  That is, for which matroids $M$ does the equality $\widehat{\rho}(I(M))=\frac{\alpha(I(M))}{\widehat{\alpha}(I(M))}$ hold?
\end{Question}

\noindent We will answer \Cref{ques:rhohat=alpha/alphahat} for a number of families of matroids in \Cref{sec:weakorder}, \Cref{sec:DesignsAndPMDs}, and \Cref{sec:asym-Steiner-system}.  In this section we collect a few initial examples of matroids for which we can answer \Cref{ques:rhohat=alpha/alphahat} using known results in the literature.

An important class of matroids which achieves equality in \GHV{} is the class of \textit{uniform matroids}.  The uniform matroid $\mathrm{U}_{k,n}$ is the rank $k$ matroid on the ground set $E$ of size $n$ whose bases are all subsets of $E$ with cardinality $k$.  Hence, the circuits of $\mathrm{U}_{k,n}$ are all subsets of $E$ with cardinality $k+1$ (provided that $k<n$).  Thus $I_{\Delta(\mathrm{U}_{k,n})}=I(\mathrm{U}_{k+1,n})$ if $k<n$.  The Waldschmidt constant, asymptotic resurgence (and even resurgence) are all known for facet ideals of uniform matroids.  We record these in the following proposition.

\begin{Proposition}\label{prop:uniform-matroid}
Let $\mathrm{U}_{k,n}$ be the uniform matroid of rank $k$ on a ground set of size $n$ with $k<n$.  Then
\begin{enumerate}
\item $\widehat{\alpha}(I_{\Delta(\mathrm{U}_{k,n})})=\dfrac{n}{n-k}$  \mbox{ and }  $\widehat{\alpha}(I(\mathrm{U}_{k,n}))=\dfrac{n}{n-k+1}$.
\item $\widehat{\rho}(I_{\Delta(\mathrm{U}_{k,n})})=\dfrac{(k+1)(n-k)}{n}=\dfrac{\alpha(I_{\Delta(\mathrm{U}_{k,n})})}{\widehat{\alpha}(I_{\Delta(\mathrm{U}_{k,n})})}$ \mbox{ and } $\widehat{\rho}(I(\mathrm{U}_{k,n}))=\dfrac{k(n-k+1)}{n}=\dfrac{\alpha(I(\mathrm{U}_{k,n}))}{\widehat{\alpha}(I(\mathrm{U}_{k,n}))}$.
\end{enumerate}
\end{Proposition}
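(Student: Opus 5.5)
Since $I_{\Delta(\mathrm{U}_{k,n})}=I(\mathrm{U}_{k+1,n})$ when $k<n$ (and $I_{\Delta(\mathrm{U}_{n,n})}$ is not needed), every claim about Stanley-Reisner ideals follows from the corresponding claim for facet ideals with $k$ replaced by $k+1$. Indeed, $\frac{n}{n-(k+1)+1}=\frac{n}{n-k}$ and $\frac{(k+1)(n-k)}{n}$ are exactly the stated values. So I only treat $I(\mathrm{U}_{k,n})$ with $1\le k<n$.

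\textbf{Waldschmidt constant.} The dual of $\mathrm{U}_{k,n}$ is $\mathrm{U}_{n-k,n}$, whose circuits are all subsets of size $n-k+1$. By \Cref{cor:SPIM}, $\SP(I(\mathrm{U}_{k,n}))$ is cut out by $\sum_{i\in C}a_i\ge 1$ over all $C$ with $|C|=n-k+1$, together with ${\bf a}\ge 0$. By \Cref{thm:SPWaldschmidt} we minimize $\sum a_i$ over this set.

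\begin{enumerate}
\item \emph{Upper bound.} The point ${\bf a}=\frac{1}{n-k+1}(1,\ldots,1)$ is feasible and gives $\sum a_i=\frac{n}{n-k+1}$.
\item \emph{Lower bound.} Average the defining inequalities over all $\binom{n}{n-k+1}$ circuits. Each coordinate appears in $\binom{n-1}{n-k}$ of them, so
\[
\binom{n-1}{n-k}\sum_i a_i\ge\binom{n}{n-k+1},
\]
which gives $\sum a_i\ge \frac{n}{n-k+1}$.
\end{enumerate}

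\textbf{Asymptotic resurgence.} I induct on $k$ using \Cref{thm:asym-res-facet}, or equivalently \Cref{prop:asym-single-contraction-matroid}.

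\begin{enumerate}
\item \emph{Base case.} For $k=1$, $I(\mathrm{U}_{1,n})$ is generated by variables, so $\widehat{\rho}=1=\frac{1\cdot n}{n}$.
\item \emph{Contractions.} For $k\ge 2$, every rank-one flat is a singleton $\{e\}$, and $\mathrm{U}_{k,n}/e\cong \mathrm{U}_{k-1,n-1}$. By induction, or directly via \Cref{thm:asym-res-facet} over independent $U$ with $|U|=j<k$ where $M/U\cong\mathrm{U}_{k-j,n-j}$,
\[
\widehat{\rho}(I(\mathrm{U}_{k,n}))=\max_{0\le j<k}\frac{(k-j)(n-k+1)}{n-j}.
\]
\item \emph{Maximization.} Since $n-k+1$ is constant in $j$, I must show that $j\mapsto \frac{k-j}{n-j}$ is maximized at $j=0$. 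This holds because $\frac{k-j}{n-j}$ is decreasing in $j$ when $k<n$: the derivative has sign $-(n-j)+(k-j)=k-n<0$.
\end{enumerate}

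The maximum is therefore $\frac{k(n-k+1)}{n}=\frac{\alpha(I)}{\widehat{\alpha}(I)}$, using $\alpha(I(\mathrm{U}_{k,n}))=k$. The only mildly delicate step is the averaging lower bound for $\widehat{\alpha}$, which is a standard symmetry argument.
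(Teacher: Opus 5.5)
Your proof is correct, but it runs in the opposite direction from the paper's and replaces the paper's citations with the paper's own machinery. The paper reduces the facet case to the Stanley-Reisner case via $I(\mathrm{U}_{k,n})=I_{\Delta(\mathrm{U}_{k-1,n})}$ and takes $\widehat{\alpha}$ from the literature. It then gets $\widehat{\rho}$ from the sandwich $\frac{\alpha}{\widehat{\alpha}}\le\widehat{\rho}\le\rho$, using the result of \cite{MG18} that the \emph{resurgence} $\rho(I_{\Delta(\mathrm{U}_{k,n})})$ already equals $\frac{\alpha}{\widehat{\alpha}}$.

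You instead reduce to facet ideals and compute $\widehat{\alpha}$ by symmetrizing over $\SP$; this is exactly \Cref{lem:bd-growth-rate} applied to the complete design of $(n-k+1)$-subsets. You then compute $\widehat{\rho}$ from \Cref{thm:asym-res-facet}. That is legitimate, since the proof of that theorem does not use the uniform case, so there is no circularity. Your route is self-contained and is essentially what the paper itself does later in \Cref{ex:uniformMatroids} via \Cref{thm:asym-res-PMD}. The paper's route is shorter and also gives $\rho=\widehat{\rho}$ for $I_{\Delta(\mathrm{U}_{k,n})}$, which your argument does not.

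There is one small bookkeeping slip. The Stanley-Reisner case $k=n-1$ corresponds to $I(\mathrm{U}_{n,n})=\langle x_1\cdots x_n\rangle$, which lies outside the range $1\le k<n$ you restricted the facet case to. It is harmless: this ideal is principal, so $I^{(s)}=I^s$. That gives $\widehat{\alpha}=n=\frac{n}{n-(n-1)}$ and $\widehat{\rho}=1=\frac{n\cdot 1}{n}$, as required. Your averaging argument on $\SP$ also goes through verbatim there, since the circuits of $\mathrm{U}_{0,n}$ are the singletons.
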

\begin{proof}
Since $I_{\Delta(\mathrm{U}_{k,n})}=I(\mathrm{U}_{k+1,n})$ if $k<n$, then it suffices to prove the formulas for the Waldschmidt constant and the asymptotic resurgence of the Stanley-Reisner ideal of a uniform matroid. Part (a)  is given in \cite[Theorem~7.5]{BCGHJNSVV}, but also follows  from \cite[Lemma 2.4.1, Lemma 2.4.2, and the proof of Theorem 2.4.3]{BH10}.  In~\cite{MG18} it is shown that $\rho(I_{\Delta(\mathrm{U}_{k,n})})=\frac{(k+1)(n-k)}{n}=\frac{\alpha(I_{\Delta(\mathrm{U}_{k,n})})}{\widehat{\alpha}(I_{\Delta(\mathrm{U}_{k,n})})}$.  Since we always have $\frac{\alpha(I)}{\widehat{\alpha}(I)}\le \widehat{\rho}(I)\le \rho(I)$ for any ideal $I$, then (b) follows. 
\end{proof}

Another class of matroids whose facet ideals attain equality in \GHV{} is the class of rank-one and rank-two matroids.  If $M$ has rank one, then $I(M)$ is generated by a subset of the variables and so is a complete intersection.  In this case $\alpha(I(M))=\widehat{\alpha}(I(M))=\widehat{\rho}(I(M))=1$.  For rank two matroids, we have the following.

\begin{Corollary}\label{lem:rank-two}
For any matroid $M$ of rank two we have 
\[
\widehat{\rho}(I(M))=\max\left\lbrace \frac{2}{\widehat{\alpha}(I(M))},\widehat{\rho}_{c,1}(I(M))\right\rbrace=\frac{2}{\widehat{\alpha}(I(M))}
.\]
\end{Corollary}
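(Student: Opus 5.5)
The first equality is \Cref{prop:asym-single-contraction-matroid} with $\rk(M)=2$, so the plan is to show $\widehat{\rho}_{c,1}(I(M))\le \frac{2}{\widehat{\alpha}(I(M))}$. Rank-one facet ideals are generated by variables, so every rank-one contraction has asymptotic resurgence $1$. By the first part of \Cref{prop:asym-single-contraction-matroid}, each term in $\widehat{\rho}_{c,1}(I(M))$ is $\widehat{\rho}(I(M/F))=1$ for some $F\in\cL_1(M)$. Hence $\widehat{\rho}_{c,1}(I(M))=1$; the set of such $F$ is nonempty because $\rk(M)=2>0$. It therefore remains to prove $\widehat{\alpha}(I(M))\le 2$.

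To bound $\widehat{\alpha}(I(M))$, I use \Cref{thm:SPWaldschmidt}, which says $\widehat{\alpha}(I(M))$ is the minimum of $\sum a_i$ over $\SP(I(M))$. So it suffices to produce one point of $\SP(I(M))$ with coordinate sum at most $2$. By \Cref{cor:SPIM}, $\SP(I(M))$ is cut out by $\sum_{i\in C}a_i\ge1$ for every circuit $C$ of $M^*$.

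For the candidate point, fix a basis $B=\{b_1,b_2\}$ of $M$ and set ${\bf a}={\bf e}_{b_1}+{\bf e}_{b_2}$, whose coordinate sum is $2$. I claim every circuit $C$ of $M^*$ meets $B$, which is exactly the statement ${\bf a}\in\SP(I(M))$.

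The claim is the standard fact that cocircuits of $M$ meet every basis. Indeed, $E\setminus B$ is a basis of $M^*$ and hence contains no circuit of $M^*$. Equivalently, the minimal primes of $I(M)$ are the $P_C$, and each contains the generator $x^B$, so $P_C\cap\{x_{b_1},x_{b_2}\}\neq\emptyset$. This gives $\widehat{\alpha}(I(M))\le 2$.

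Putting the pieces together, $\frac{2}{\widehat{\alpha}(I(M))}\ge 1=\widehat{\rho}_{c,1}(I(M))$, so the maximum in the first equality equals $\frac{2}{\widehat{\alpha}(I(M))}$. The argument is routine; the only step needing care is checking that every contraction by a rank-one flat really has rank one, which follows from $\rk(M/F)=k-\rk_M(F)=1$.
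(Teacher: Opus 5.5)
Your proof is correct and follows essentially the same route as the paper. Both arguments note that each rank-one contraction $M/F$ has a facet ideal generated by variables, so it contributes exactly $1$, and that $\widehat{\alpha}(I(M))\le 2$. The paper packages this through \Cref{thm:asym-res-facet} and simply asserts $\widehat{\alpha}(I(M))\le 2$. You work directly from \Cref{prop:asym-single-contraction-matroid} and justify the bound by exhibiting a point of $\SP(I(M))$ with coordinate sum $2$; equivalently, this is the general fact $\widehat{\alpha}(I)\le\alpha(I)$.
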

\begin{proof}
The proper flats of $M$ have rank zero or one.  If $F$ is a rank one flat of $M$, then $M/F$ has rank one, so $I(M/F)$ is generated by a subset of the variables.  Hence $k-\rk_M(F)=2-1=1$ and $\widehat{\alpha}(I(M/F))=1$.  If $\rk_M(F)=0$, then $F=\loops(M)$ and $M/F=M\setminus \loops(M)$.  Since $\widehat{\alpha}(I(M))\le 2$, it follows from \Cref{thm:asym-res-facet} that
$
\widehat{\rho}(I(M))=\frac{2}{\widehat{\alpha}(I(M))},
$
as claimed.
\end{proof}

\begin{Remark}\label{rmk:edge-ideal}
The facet ideal of a rank two matroid is also the edge ideal of a graph, so \Cref{lem:rank-two} is also a consequence of \cite[Theorem~3.12]{DFMS19}.
\end{Remark}

Due to \Cref{lem:rank-two}, if the facet ideal of a matroid does not achieve equality in the \GHV{} lower bound, then it must have rank at least three.  The following is the smallest such matroid.

\begin{Example}\label{ex:rhohatnotalphaoveralphahat}
Let $\cB=\{\{1,2,4\},\{1,3,4\},\{2,3,4\}\}$.  It is straightforward to check that the basis exchange axiom holds for $\cB$, hence $\cB$ forms a set of bases of a matroid $M$ of rank $3$, called an \textit{almost-uniform matroid} (see~\Cref{ss:almost-uniform}).  Note that $I(M)=(x_4)(x_1x_2,x_1x_3,x_2x_3)$.  It follows from \cite[Example 1.1]{DFMS19} that     
\[
\widehat{\alpha}(I(M))=\frac{5}{2} \text{ and } \widehat{\rho}(I(M))= \widehat{\rho}(I(\mathrm{U}_{2,3}))=\frac{4}{3} >\frac{6}{5} =\frac{\alpha(I(M))}{\widehat{\alpha}(I(M))}.
\]
In \Cref{ss:almost-uniform} we compute the Waldschmidt constant and asymptotic resurgence for the facet ideal of any almost-uniform matroid.
\end{Example}

Direct sums of matroids can be leveraged to construct additional examples of matroids $M$ for which $\widehat{\rho}(I(M))>\frac{\alpha(I(M))}{\widehat{\alpha}(I(M))}$.

\begin{Example}[Direct Sums] \label{ex:direct-sum}
Suppose $M=M_1\oplus M_2$ is a direct sum of matroids $M_1$ and $M_2$.  Write $I_1=I(M_1)$, $I_2=I(M_2)$, and $I=I(M)$.  It is straightforward to check that $I=I_1I_2$.  Observe that $I_1$, $I_2$ have minimal generators in disjoint sets of variables. Therefore, it follows from \cite[Proposition 3.5]{JKM22} that $I^{(s)}=I_1^{(s)}I_2^{(s)}$ for all $s \ge 1$ and  $\widehat{\rho}(I)=\max\{\widehat{\rho}(I_1),\widehat{\rho}(I_2)\}$.
Moreover, it is straightforward to check that
\[
\frac{\alpha(I)}{\widehat{\alpha}(I)}=\frac{\alpha(I_1)+\alpha(I_2)}{\widehat{\alpha}(I_1)+\widehat{\alpha}(I_2)}.
\]
Without loss of generality, suppose $\frac{\alpha(I_1)}{\widehat{\alpha}(I_1)}\le \frac{\alpha(I_2)}{\widehat{\alpha}(I_2)}$.  Then
\[
\frac{\alpha(I_1)}{\widehat{\alpha}(I_1)}\le\frac{\alpha(I_1)+\alpha(I_2)}{\widehat{\alpha}(I_1)+\widehat{\alpha}(I_2)}\le \frac{\alpha(I_2)}{\widehat{\alpha}(I_2)},
\]
and both inequalities are strict unless $\frac{\alpha(I_1)}{\widehat{\alpha}(I_1)}=\frac{\alpha(I_2)}{\widehat{\alpha}(I_2)}$. It follows that if $\frac{\alpha(I_1)}{\widehat{\alpha}(I_1)}<\frac{\alpha(I_2)}{\widehat{\alpha}(I_2)}$,
then
\[
\widehat{\rho}(I)=\max\{\widehat{\rho}(I_1), \widehat{\rho}(I_2)\}\ge \frac{\alpha(I_2)}{\widehat{\alpha}(I_2)}>\frac{\alpha(I_1)+\alpha(I_2)}{\widehat{\alpha}(I_1)+\widehat{\alpha}(I_2)}=\frac{\alpha(I)}{\widehat{\alpha}(I)}.
\]
For example if $M=\mathrm{U}_{2,3}\oplus \mathrm{U}_{2,4}$, then \Cref{prop:uniform-matroid} implies that
\[
\widehat{\rho}(I(M))=\max\left\lbrace \frac{2\cdot 2}{3},\frac{2\cdot 3}{4} \right\rbrace=\frac{3}{2}
\quad
\mbox{while}
\quad
\frac{\alpha(I(M))}{\widehat{\alpha}(I(M))}=\frac{4}{\frac{3}{2}+\frac{4}{2}}=\frac{8}{7}<\frac{3}{2}.
\]
\end{Example}

\section{Asymptotic resurgence via the weak order} \label{sec:weakorder}
In this section, we first prove that asymptotic resurgence of facet and Stanley-Reisner ideals of matroids respects the \textit{weak order} on matroids of the same rank (\Cref{thm:asym-order}).  It follows from \Cref{thm:asym-order} that the asymptotic resurgence of matroids that are greatest in the weak order bounds the asymptotic resurgence of all matroids of a given rank.  In particular, a non-uniform matroid is always less in the weak order than an \textit{almost-uniform} matroid.  We use \Cref{thm:asym-order} and \Cref{thm:asym-res-facet} to compute the asymptotic resurgence of (facet ideals of) almost-uniform matroids (\Cref{thm:relaxation-to-uniform}), thus resulting in an upper bound on the asymptotic resurgence of the facet ideals of non-uniform matroids (\Cref{cor:asym-upper-bound}). We finish the section by proving that the facet ideal of a matroid whose dual is paving and whose rank is at most half the size of the ground set achieves equality in \GHV{} (\Cref{thm:asym-res-dual-paving}).  Based on \Cref{thm:asym-res-dual-paving}, conjectures of Mayhew-Newman-Welsh-Whittle \cite{MNWW-2011}, and computational evidence up to rank $8$ (see \Cref{tab:master-full}), we conjecture that asymptotically almost all matroids have a facet ideal which achieves equality in \GHV{}.

\subsection{Asymptotic resurgence respects weak order.}

Our first result is that the asymptotic resurgence of both the facet ideal and the Stanley-Reisner ideal of a matroid respects the weak order on matroids of the same rank. 

\begin{Definition}\label{def: weak order}
Let $M$ and $N$ be matroids on the ground set $E$.  We say that the matroid $M$ is less than the matroid $N$ in the \textit{weak order}, written $M\preceq N$, if any set $U\subset E$ which is independent in $M$ is also independent in $N$. In particular, if $M$ and $N$ have the same rank, then $M\preceq N$ if and only if $\cB(M) \subseteq \cB(N)$. 
\end{Definition}

\begin{Theorem}\label{thm:asym-order}
Let $M_1$ and $M_2$ be matroids both of rank $k$ on the same ground set $E$ so that $M_1\preceq M_2$. Then $\widehat{\rho}(I(M_1)) \le \widehat{\rho}(I(M_2))$ and $\widehat{\rho}(I_{\Delta(M_1)})\le \widehat{\rho}(I_{\Delta(M_2)})$.
\end{Theorem}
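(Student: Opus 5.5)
The plan is to compare the two maxima in the middle expression of \Cref{thm:asym-res-facet} term by term. Since $M_1$ and $M_2$ have the same rank $k$ and $M_1\preceq M_2$, \Cref{def: weak order} gives $\cB(M_1)\subseteq\cB(M_2)$, and hence $I(M_1)\subseteq I(M_2)$. Also every independent set of $M_1$ is independent in $M_2$. By \Cref{thm:asym-res-facet},
\[
\widehat{\rho}(I(M_j))=\max_{U\in\cI(M_j),\,|U|<k}\left\{\frac{k-|U|}{\widehat{\alpha}(I(M_j/U))}\right\},\qquad j=1,2.
\]
The index set for $M_1$ is contained in the index set for $M_2$, and the numerators $k-|U|$ agree. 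It therefore suffices to show that for every $U\in\cI(M_1)$ with $|U|<k$,
\[
\widehat{\alpha}(I(M_1/U))\ge\widehat{\alpha}(I(M_2/U)).
\]

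For this inequality I would use \Cref{lem:quotient-subset}, which identifies $I(M_j/U)$ with $I(M_j):x^U$. Colon preserves inclusions, so $J_1:=I(M_1/U)\subseteq J_2:=I(M_2/U)$. Both ideals are nonzero, since $U$ extends to a basis of $M_1$, and that basis is also a basis of $M_2$.

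The key observation is that an inclusion $J_1\subseteq J_2$ of squarefree monomial ideals implies $J_1^{(s)}\subseteq J_2^{(s)}$ for all $s$. To see this, let $P\in\mathrm{Min}(J_2)$. Then $P\supseteq J_2\supseteq J_1=\bigcap_{Q\in\mathrm{Min}(J_1)}Q$, and since $P$ is prime it contains some $Q\in\mathrm{Min}(J_1)$. Hence $J_1^{(s)}\subseteq Q^s\subseteq P^s$. Intersecting over all $P\in\mathrm{Min}(J_2)$ gives $J_1^{(s)}\subseteq J_2^{(s)}$, using the formula $I^{(s)}=\bigcap_{P\in\mathrm{Min}(I)}P^s$ for squarefree monomial ideals. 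It follows that $\alpha(J_1^{(s)})\ge\alpha(J_2^{(s)})$ for all $s$, and dividing by $s$ and letting $s\to\infty$ gives the desired inequality of Waldschmidt constants. (Alternatively, the inclusion $\SP(J_1)\subseteq\SP(J_2)$ together with \Cref{thm:SPWaldschmidt} gives the same conclusion.) This step is the only nonformal point, and it is straightforward.

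For the Stanley-Reisner statement, \Cref{prop:Alex duality facet} gives $\widehat{\rho}(I_{\Delta(M_j)})=\widehat{\rho}(I(M_j^*))$. Complementing bases shows that $\cB(M_1)\subseteq\cB(M_2)$ implies $\cB(M_1^*)\subseteq\cB(M_2^*)$. The duals both have rank $|E|-k$, so $M_1^*\preceq M_2^*$. Applying the facet-ideal inequality to $M_1^*\preceq M_2^*$ then yields $\widehat{\rho}(I_{\Delta(M_1)})\le\widehat{\rho}(I_{\Delta(M_2)})$.
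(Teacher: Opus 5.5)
Your proposal is correct and follows the paper's proof essentially step for step. You compare the independent-set form of \Cref{thm:asym-res-facet} termwise, using $I(M_1/U)\subseteq I(M_2/U)$ to get $\widehat{\alpha}(I(M_1/U))\ge\widehat{\alpha}(I(M_2/U))$, and then apply the facet case to the duals for the Stanley-Reisner statement; your explicit argument that an inclusion of squarefree monomial ideals yields an inclusion of their symbolic powers fills in a step the paper only asserts.
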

\begin{proof}
Suppose $M_1$ and $M_2$ are matroids of the same rank $k$ on the same ground set $E=[n]$ so that $M_1\preceq M_2$ in the weak order.  We first prove that $\widehat{\rho}(I(M_1)) \le \widehat{\rho}(I(M_2))$.  Let $U$ be an independent set of $M_1$ so that $\rk_{M_1}(U) <k$.  Since  $\cB(M_1) \subseteq \cB(M_2)$, $U$ is also an independent set of $M_2$ with $\rk_{M_2}(U)=\rk_{M_1}(U)<k$. Note that $I(M_1):x_U \subseteq I(M_2):x_U$ as  $I(M_1) \subseteq I(M_2)$.   By \Cref{lem:quotient-subset}, we get $I(M_1/U) \subseteq I(M_2/U)$ for all $U \in \cI(M_1)$ with $\rk_{M_1}(U)<k$.   This implies that $I(M_1/U)^{(s)} \subseteq I(M_2/U)^{(s)}$ for all $s \ge 1$, and hence, $\widehat{\alpha}(I(M_1/U)) \ge \widehat{\alpha}(I(M_2/U))$.  Thus, $\frac{k-|U|}{\widehat{\alpha}(I(M_1/U))} \le \frac{k-|U|}{\widehat{\alpha}(I(M_2/U))}$ for every $U \in \cI(M_1)$ with $\rk_{M_1}(U)<k$. 
By \Cref{thm:asym-res-facet}, we get 
\begin{align*}
\widehat{\rho}(I(M_1))=\max\limits_{U \in \mathcal{I}(M_1),~|U|<k}\left\{ \frac{k-|U|}{\widehat{\alpha}(I(M_1/U))} \right \}
 &\le \max\limits_{U \in \mathcal{I}(M_1),~|U|<k}\left\{ \frac{k-|U|}{\widehat{\alpha}(I(M_2/U))} \right \}\\[7 pt]
& \le \max\limits_{U \in \mathcal{I}(M_2),~|U|<k}\left\{ \frac{k-|U|}{\widehat{\alpha}(I(M_2/U))} \right \}
 =\widehat{\rho}(I(M_2)).\end{align*}

Next we prove that $\widehat{\rho}(I_{\Delta(M_1)})\le \widehat{\rho}(I_{\Delta(M_2)})$.  Observe that if $M_1\preceq M_2$, then $M^*_1\preceq M^*_2$, as $M_1, M_2$  have the same rank.  From the previous part, $\widehat{\rho}(I(M_1^*))\le \widehat{\rho}(I(M_2^*))$.  Since $I(M_i^*)$ is the Alexander dual of $I_{\Delta(M_i)}$ for $i=1,2$, it follows from \Cref{prop:Alex duality facet} that $\widehat{\rho}(I_{\Delta(M_1)})\le \widehat{\rho}(I_{\Delta(M_2)})$.
\end{proof}

The same-rank assumption in \Cref{thm:asym-order} is essential. If this condition is relaxed, the result fails to hold, as demonstrated in the following example.

\begin{Example}
Let $\mathrm{U}_{k,n}$ be the uniform matroid of rank $k$ on a ground set  of size $n$. By \Cref{prop:uniform-matroid}, $\widehat{\rho}(I(\mathrm{U}_{k,n}))=\frac{k(n-k+1)}{n}$.

From the definition of weak order, if $1\le s\le t\le n$, then $\mathrm{U}_{s,n}\preceq \mathrm{U}_{t,n}$.  However, as a function of $k$, $\frac{k(n-k+1)}{n}$ is increasing for $0\le k\le (n+1)/2$ and decreasing for $(n+1)/2\le k\le n$.  So, if $(n+1)/2\le s<t\le n-1$, then $\mathrm{U}_{s,n}\preceq \mathrm{U}_{t,n}$ but $\widehat{\rho}(I(\mathrm{U}_{s,n}))=\frac{s(n-s+1)}{n}>\frac{t(n-t+1)}{n}=\widehat{\rho}(I(\mathrm{U}_{t,n}))$.
\end{Example}

\subsection{The case of almost-uniform matroids}\label{ss:almost-uniform}

If $\mathcal{M}$ is a (usually minor-closed) class of matroids, an \textit{almost}-$\mathcal{M}$ matroid $M$ on a ground set $E$ is one that satisfies that, for every $e\in E$, either $M/e\in \mathcal{M}$ or $M\backslash e\in \mathcal{M}$~\cite{KL-2002}.  In particular, a matroid $M$ of rank $k$ is \textit{almost-uniform} if, for every $e\in E$, either $M/e\cong \mathrm{U_{k-1,n-1}}$ or $M\backslash e=\mathrm{U_{k,n-1}}$.  A characterization of almost-uniform matroids appears in \cite{VN20}.

\begin{Proposition}\cite[Theorem~3.4]{VN20}\label{prop:almost-uniform-equivalence}
Let $M$ be a rank $k$ matroid on a ground set $E$.  The following are equivalent:
\begin{enumerate}
\item $M$ is almost-uniform.
\item The set of bases for $M$ consists of all $k$-subsets of $E$ except one.
\end{enumerate}
\end{Proposition}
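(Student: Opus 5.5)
We will prove the two implications separately, translating each of the two alternatives in the definition of almost-uniform into a statement about which $k$-subsets of $E$ are bases. Throughout we read condition (b) as describing a non-uniform matroid: a uniform matroid trivially satisfies (a), so the equivalence is meant for non-uniform $M$, and we assume this. Let $n=|E|$ and recall $k<n$. Call a $k$-subset of $E$ a \emph{nonbasis} if it is not in $\cB(M)$.

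The first step is a reformulation of the two conditions at a single element $e\in E$.
\begin{enumerate}
\item[(i)] $M\backslash e=\mathrm{U}_{k,n-1}$ holds if and only if every $k$-subset of $E\setminus\{e\}$ is a basis of $M$, i.e.\ every nonbasis contains $e$. This follows from $\cB(M\backslash e)=\{B\in\cB(M): e\notin B\}$.
\item[(ii)] $M/e\cong \mathrm{U}_{k-1,n-1}$ holds if and only if $e$ is not a loop and every $k$-subset containing $e$ is a basis, i.e.\ every nonbasis avoids $e$. For the forward direction, if $e$ were a loop then $M/e$ would have rank $k$, not $k-1$. So $e$ is a non-loop, and then $\cB(M/e)=\{B\subseteq E\setminus\{e\}: B\cup\{e\}\in\cB(M)\}$. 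A matroid isomorphic to a uniform matroid is uniform, so all $(k-1)$-subsets of $E\setminus\{e\}$ are bases of $M/e$. The converse is the same computation read backwards: some basis contains $e$, so $e$ is not a loop.
\end{enumerate}

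For (b)$\Rightarrow$(a), let $X$ be the unique nonbasis.
\begin{itemize}
\item If $e\in X$, then every nonbasis contains $e$, so (i) gives $M\backslash e=\mathrm{U}_{k,n-1}$.
\item If $e\notin X$, then every nonbasis avoids $e$, so (ii) gives $M/e\cong\mathrm{U}_{k-1,n-1}$.
\end{itemize}

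For (a)$\Rightarrow$(b), the reformulation says that for each $e\in E$, either every nonbasis contains $e$ or every nonbasis avoids $e$. Since $M$ is non-uniform, there is at least one nonbasis $N$. For each $e\in N$, the nonbasis $N$ does not avoid $e$, so every nonbasis contains $e$. Hence every nonbasis contains $N$. Since all nonbases are $k$-sets, each of them equals $N$, so there is exactly one nonbasis, which is (b).

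The only delicate point is the loop and isomorphism bookkeeping in (ii): we must rule out that $M/e$ is uniform of the wrong rank, and we must check that $M/e$ really lives on $E\setminus\{e\}$, so that "isomorphic to uniform" and "uniform" coincide. Beyond that, the argument is a direct counting of which $k$-sets are bases.
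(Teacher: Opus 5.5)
Your proof is correct. The paper gives no argument for this proposition: it quotes \cite{VN20}, so your proof is a self-contained replacement for a citation rather than a variant of an argument in the paper. It reduces condition (a) to the statement that, for every $e\in E$, the nonbases either all contain $e$ or all avoid $e$. A single nonbasis $N$ then forces every nonbasis to contain $N$, hence to equal $N$. This needs nothing beyond the descriptions of $\cB(M\backslash e)$ and $\cB(M/e)$.

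Your caveat about uniform matroids is necessary, not pedantic. With the definition of almost-uniform exactly as phrased in the paper, $\mathrm{U}_{k,n}$ with $1\le k<n$ satisfies (a), since both $M/e=\mathrm{U}_{k-1,n-1}$ and $M\backslash e=\mathrm{U}_{k,n-1}$ hold for every $e$. It does not satisfy (b). So the equivalence holds only under the convention (as in \cite{KL-2002}) that an almost-$\mathcal{M}$ matroid is not itself in $\mathcal{M}$, which is the reading you adopt.

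Two small bookkeeping points are worth adding.
\begin{enumerate}
\item In (i), the formula $\cB(M\backslash e)=\{B\in\cB(M): e\notin B\}$ holds only when $e$ is not a coloop; the paper's stated definition of deletion hides this. Symmetrically to your loop argument in (ii), note that $\rk(M\backslash e)=k$ rules out a coloop in the forward direction. In the converse, $k<n$ gives a $k$-subset of $E\setminus\{e\}$, hence a basis avoiding $e$.
\item The step ``some basis contains $e$'' in (ii) needs $k\ge 1$.
\end{enumerate}
Both bounds $1\le k\le n-1$ follow from non-uniformity, or from (b), since the $k$-subsets must include both a basis and a nonbasis. So nothing in your argument breaks.
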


We denote an almost uniform matroid of rank $k$ on a ground set of size $n$ by $\mathrm{U}_{k,n}^{-}$.  If we wish to specify the unique basis $C$ of $\mathrm{U}_{k,n}$ that is not in the bases of $\mathrm{U}^-_{k,n}$ (guaranteed by \Cref{prop:almost-uniform-equivalence}), we write $\mathrm{U}^{-}_{k,n}(C)$.

\begin{Proposition}\label{lem:relaxation-to-uniform}
Let $\mathrm{U}_{k,n}^-$ be an almost-uniform matroid of rank $k$ on the ground set $E$ of size $n$ with $k<n$.  Then
\[
\widehat{\alpha}(I(\mathrm{U}_{k,n}^-))= \min\left\lbrace\frac{n-1}{n-k},\frac{n+1}{n-k+1}\right\rbrace=
\begin{cases}
\dfrac{n-1}{n-k} & \text{ if } 2k\le  {n+1},\\[10 pt]
\dfrac{n+1}{n-k+1} & \text{ if } 2k \ge {n+1}.
\end{cases}
\]
\end{Proposition}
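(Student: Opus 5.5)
The plan is to compute $\widehat{\alpha}(I(\mathrm{U}^-_{k,n}))$ as the optimal value of the linear program in \Cref{thm:SPWaldschmidt}. The steps are: find the constraints from \Cref{cor:SPIM}, use symmetry to cut the program down to two variables, and solve that small program exactly.

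\emph{Step 1: the constraints.} Write $M=\mathrm{U}^-_{k,n}(C)$ with $|C|=k$. By \Cref{cor:SPIM}, the defining inequalities of $\SP(I(M))$ are indexed by the circuits of $M^*$, and these are the complements of the hyperplanes of $M$.
\begin{itemize}
\item The set $C$ is dependent of rank $k-1$, and every $(k+1)$-set containing $C$ contains a basis. So $C$ is closed, and it is a hyperplane (a circuit-hyperplane).
\item A $(k-1)$-set $H$ with $H\not\subseteq C$ is independent. Every element outside $H$ gives a $k$-set different from $C$, which is a basis. So $H$ is also a hyperplane.
\item A $(k-1)$-set contained in $C$ has closure $C$, and any larger set not contained in $C$ has full rank. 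So there are no other hyperplanes.
\end{itemize}
Hence $\SP(I(M))$ is cut out by ${\bf a}\ge 0$ together with
\[
\sum_{i\in E\setminus C}a_i\ge 1 \quad\text{and}\quad \sum_{i\in E\setminus H}a_i\ge 1 \ \text{ for all $(k-1)$-sets } H\not\subseteq C.
\]

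\emph{Step 2: reduction to two variables.} Both the constraint set and the objective $\sum_i a_i$ are invariant under $\mathrm{Sym}(C)\times\mathrm{Sym}(E\setminus C)$. Averaging an optimal point over this group gives an optimal point with $a_i=t$ on $C$ and $a_i=u$ on $E\setminus C$. So the program reduces to minimizing $kt+(n-k)u$ over $t,u\ge 0$ subject to
\begin{itemize}
\item $(n-k)u\ge 1$, and
\item $(k-j)t+(n-2k+1+j)u\ge 1$ for each admissible value $j=|H\cap C|$.
\end{itemize}
The condition $H\not\subseteq C$ gives $j\le k-2$. The requirement $|H\setminus C|\le n-k$ gives $j\ge j_0:=\max\{0,2k-1-n\}$.

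\emph{Step 3: the upper bound.} Take $u=1/(n-k)$. The left side of the $j$-constraint is linear in $j$ with slope $u-t$, so when $t\le u$ only $j=j_0$ is binding.
\begin{itemize}
\item If $2k\le n+1$, then $j_0=0$, and the constraint forces $kt\ge (k-1)/(n-k)$. The value is $1+\frac{k-1}{n-k}=\frac{n-1}{n-k}$.
\item If $2k\ge n+1$, then $j_0=2k-1-n$, and the constraint becomes $(n-k+1)t\ge 1$, so $t=\frac{1}{n-k+1}\le u$. The value is $\frac{k}{n-k+1}+1=\frac{n+1}{n-k+1}$.
\end{itemize}
In both cases these points are feasible, which gives the upper bounds.

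\emph{Step 4: the lower bound, which I expect to be the main work.} I will show these values are optimal by exhibiting dual certificates. In each case, take a nonnegative combination of the active constraints $(n-k)u\ge 1$ and the $j_0$-constraint that reproduces the objective coefficients $(k,\,n-k)$. For $2k\le n+1$ the multipliers are $\lambda=1/(n-k)$ on the first constraint and $\mu=1$ on the second. I will also dispose of the region $t>u$: there the binding constraint is $j=k-2$, and I will check that this region cannot give a smaller value because the $(n-k)u\ge 1$ constraint already dominates. Comparing the two values confirms that the minimum of the two expressions is attained as stated in each range of $k$, with equality at $2k=n+1$.
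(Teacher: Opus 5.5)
Your proposal is correct and is essentially the paper's proof. The paper's lower bound sums the constraints over the orbit $\mathcal{U}_i$ with $i=\min\{k,n-k+1\}$, which is your $j_0$-constraint, and then adds the $E\setminus C$ inequality; this is exactly your dual certificate before symmetrization. Its feasible points ${\bf v}_c$ and ${\bf u}_d$, averaged over $\mathrm{Sym}(C)\times\mathrm{Sym}(E\setminus C)$, are your symmetric points.

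There is one arithmetic slip in Step 4. For $2k\le n+1$ the multiplier on $(n-k)u\ge 1$ must be $\lambda=\frac{k-1}{n-k}$, not $\frac{1}{n-k}$. With $\lambda=\frac{1}{n-k}$ the $u$-coefficient is $n-2k+2$, and you only obtain the weaker bound $\frac{n-k+1}{n-k}$. For $2k\ge n+1$ the multipliers are $\lambda=1$ and $\mu=\frac{k}{n-k+1}$.

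Finally, a dual certificate is valid on the whole feasible region, so your separate treatment of the region $t>u$ is unnecessary.
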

\begin{proof}
Let $M= \mathrm{U}_{k,n}^-(C)$ and $I=I(M)$.  We use the symbolic polyhedron $\SP(I(M))$ to compute the Waldschmidt constant via \Cref{thm:SPWaldschmidt}.  Since the bases of $M^*$ are all subsets of size $n-k$ except $E\setminus C$,  $\C(M^*)=\{E  \setminus C\} \cup \mathcal{U}_{n,k,C}$, where $\mathcal{U}_{n,k,C}:=\{U\subset [n]~:~ |U|=n-k+1, E\setminus C\not\subset U\}$.  Thus, by \Cref{cor:SPIM}, the defining inequalities of $\SP(I)$ (inside $\RR^n_{\ge 0}$) are $\sum\limits_{i\in E\setminus C} a_i\ge 1$ and $\sum\limits_{i\in U} a_i\ge 1$ for all $U\in \mathcal{U}_{n,k,C}$.

Let $c\in C$ and define
\[
{\bf v}_c=\frac{1}{n-k}\sum_{i\neq c}{\bf e}_i.
\]
Observe that $\langle \chi_{E\setminus C},{\bf v}_c\rangle=1$ and for any  $U\in \mathcal{U}_{n,k,C}$,
\[
\langle \chi_U,{\bf v}_c\rangle =
\begin{cases}
1 & c\notin U\\
1+\frac{1}{n-k} & c\in U.
\end{cases}
\]
It follows that ${\bf v}_c\in \SP(I)$ for any $c\in C$.  Therefore, by \Cref{thm:SPWaldschmidt}, $\widehat{\alpha}(I)\le \frac{1}{n-k}\sum\limits_{i\neq c}1=\frac{n-1}{n-k}$.

Next, for $d \in E\setminus C$, we define
\[
{\bf u}_d=\frac{2}{n-k+1}{\bf e}_d+\sum_{i\neq d} \frac{1}{n-k+1} {\bf e}_i.
\]
Notice that $\langle \chi_{E\setminus C},{\bf u}_d\rangle=1$ and for any $U\in \mathcal{U}_{n,k,C}$,
\[
\langle \chi_U,{\bf u}_d\rangle =
\begin{cases}
1 & d\notin U\\
1+\frac{1}{n-k+1} & d\in U,
\end{cases}
\] which implies that ${\bf u}_d\in \SP(I)$ for any $d\in E\setminus C$. Thus, by \Cref{thm:SPWaldschmidt}, $\widehat{\alpha}(I)\le \frac{2}{n-k+1} + \sum\limits_{i\neq d}\frac{1}{n-k+1}=\frac{n+1}{n-k+1}$, and hence,  $\widehat{\alpha}(I)\le \min\left\lbrace\frac{n-1}{n-k},\frac{n+1}{n-k+1}\right\rbrace$.

For the inequality $\min\left\lbrace\frac{n-1}{n-k},\frac{n+1}{n-k+1}\right\rbrace\le \widehat{\alpha}(I)$, we define some auxiliary sets.  Since $U\in\mathcal{U}_{n,k,C}$ means that $|U|=n-k+1$ and $E\setminus C\not\subset U$, we must have $2\le |U\cap C|\le \min\{k,n-k+1\}$ for all $U\in\mathcal{U}_{n,k,C}$.  For any integer $2\le i\le \min\{k,n-k+1\}$, we set $\mathcal{U}_i:=\{U\in \mathcal{U}_{n,k,C}~:~ |U\cap C|=i\}$ and for any $j\in E$, we define $\mathcal{U}_i^{(j)}=\{U\in \mathcal{U}_i: j\in U\}$. 

Now, let ${\bf a}=(a_1,\ldots,a_n)\in \SP(I)$.  Therefore, by \Cref{def:SymbolicPolyhedron}, $\sum\limits_{j\in U}a_j\ge 1$ for all $U\in \mathcal{U}_i$. Consequently, $\sum\limits_{U \in \mathcal{U}_i}\sum\limits_{j\in U}a_j\ge \sum\limits_{U\in \mathcal{U}_i}1=|\mathcal{U}_i|$ which implies that $\sum\limits_{j\in E} |\mathcal{U}_i^{(j)}| a_j\ge |\mathcal{U}_i|$. 
Observe that $|\mathcal{U}_i|=\binom{k}{i}\binom{n-k}{n-k+1-i}$ for any $2\le i\le \min\{k,n-k+1\}$.  Observe also that
\[
|\mathcal{U}_i^{(j)}|=
\begin{cases}
\binom{k-1}{i-1}\binom{n-k}{n-k+1-i} & j\in C\\
\binom{k}{i}\binom{n-k-1}{n-k-i} & j\notin C
\end{cases},
\]
where we interpret $\binom{n-k-1}{n-k-i}=0$ if $i=n-k+1$.

Thus, for each $2\le i\le \min\{k,n-k+1\}$, we get the inequality
\begin{equation}\label{ineq:binom}
\binom{k-1}{i-1}\binom{n-k}{n-k+1-i}\left(\sum_{j\in C}a_j\right)+\binom{k}{i}\binom{n-k-1}{n-k-i}\left(\sum_{j\in E\setminus C} a_j\right)\ge \binom{k}{i}\binom{n-k}{n-k+1-i}.
\end{equation}
Provided that 
\begin{equation}\label{ineq:feasible}
\binom{k}{i}\binom{n-k-1}{n-k-i}\le \binom{k-1}{i-1}\binom{n-k}{n-k+1-i},
\end{equation}
then we may add the inequality $\sum\limits_{i\in E\setminus C} a_i\ge 1$ to both sides of \eqref{ineq:binom} $\binom{k-1}{i-1}\binom{n-k}{n-k+1-i}-\binom{k}{i}\binom{n-k-1}{n-k-i}$ times to get the inequality
\begin{equation*}
\binom{k-1}{i-1}\binom{n-k}{n-k+1-i} \left(\sum_{i=1}^n a_i\right)\ge \binom{k}{i}\binom{n-k}{n-k+1-i}+\binom{k-1}{i-1}\binom{n-k}{n-k+1-i}-\binom{k}{i}\binom{n-k-1}{n-k-i}
\end{equation*}
which (since $0<i$ and $0<n-k$) reduces to
\begin{equation}\label{ineq:wald}
\sum_{i=1}^n a_i\ge \frac{k}{i}+1-\frac{k}{i}\cdot\frac{n-k+1-i}{n-k}.
\end{equation}
If $2k\le {n+1}$, so $k\le n-k+1$, we choose $i=k$ (which satisfies~\eqref{ineq:feasible}) in \eqref{ineq:wald} to get
\[
\sum\limits_{i=1}^n a_i\ge 2-\frac{n-2k+1}{n-k}=\frac{n-1}{n-k},
\]
and hence, by \Cref{thm:SPWaldschmidt}, $\widehat{\alpha}(I)=\min\limits_{{\bf a}\in \SP(I)} \sum\limits_{i=1}^na_i \ge\frac{n-1}{n-k}$ when $k\le \frac{n+1}{2}$.  Since we already established that $\widehat{\alpha}(I)\le \frac{n-1}{n-k}$, we have $\widehat{\alpha}(I)=\frac{n-1}{n-k}$ when $2k\le n+1$.

Finally, if $2k\ge n+1$, then $k\ge n-k+1$, we choose $i=n-k+1$ (which also satisfies~\eqref{ineq:feasible}) in \eqref{ineq:wald} to get
\[
\sum_{i=1}^n a_i\ge \frac{k}{n-k+1}+1=\frac{n+1}{n-k+1},
\] and 
hence, by \Cref{thm:SPWaldschmidt}, $\widehat{\alpha}(I)\ge \frac{n+1}{n-k+1}$ when $2k\ge  n+1$.  Since we already established that $\widehat{\alpha}(I)\le \frac{n+1}{n-k+1}$, we have $\widehat{\alpha}(I)=\frac{n+1}{n-k+1}$ when $2k\ge n+1$.
\end{proof}

\begin{Theorem}
    \label{thm:relaxation-to-uniform} Let $\mathrm{U}_{k,n}^-$ be an  almost-uniform matroid of rank $k$ on the ground set $E$ of size $n$ with $k <n$.   Then 
    \[
    \widehat{\rho}(I(\mathrm{U}_{k,n}^-))=\max\left\{\frac{(k-1)(n-k+1)}{n-1},\frac{k(n-k)}{n-1}\right\}
    =\begin{cases}
     \dfrac{k(n-k)}{n-1} & \text{ if } 2k\le {n+1}\\[10 pt]
    \dfrac{(k-1)(n-k+1)}{n-1} & \text{ if } 2k\ge n+1.
    \end{cases}
    \]
\end{Theorem}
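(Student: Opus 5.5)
We would compute $\widehat{\rho}(I(\mathrm{U}^-_{k,n}))$ using the single-contraction reduction of \Cref{prop:asym-single-contraction-matroid}:
\[
\widehat{\rho}(I(M))=\max\left\lbrace \frac{k}{\widehat{\alpha}(I(M))},\ \max_{e\in E\setminus\loops(M)}\widehat{\rho}(I(M/e))\right\rbrace,
\]
with $M=\mathrm{U}^-_{k,n}(C)$. The first term is already available from \Cref{lem:relaxation-to-uniform}, so the work is to identify the single-element contractions and then compare numbers. The case $k=1$ is trivial: $I(M)$ is generated by variables, so $\widehat{\rho}=1$, which matches the formula. Assume $k\ge 2$ from now on; then $M$ has no loops.

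\textbf{Step 1: the contractions.} A set $B\subseteq E\setminus\{e\}$ of size $k-1$ is a basis of $M/e$ exactly when $B\cup\{e\}\neq C$.
\begin{itemize}
\item If $e\notin C$, every $(k-1)$-subset qualifies, so $M/e\cong \mathrm{U}_{k-1,n-1}$. By \Cref{prop:uniform-matroid}, $\widehat{\rho}(I(M/e))=\frac{(k-1)(n-k+1)}{n-1}$. Such an $e$ exists because $k<n$.
\item If $e\in C$, exactly one $(k-1)$-subset is excluded, namely $C\setminus\{e\}$, so $M/e\cong\mathrm{U}^-_{k-1,n-1}$. This matroid precedes $\mathrm{U}_{k-1,n-1}$ in the weak order and has the same rank, so \Cref{thm:asym-order} bounds its asymptotic resurgence by the same value.
\end{itemize}
Hence $\widehat{\rho}_{c,1}(I(M))=\frac{(k-1)(n-k+1)}{n-1}$. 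Using the weak order here avoids any induction on almost-uniform matroids.

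\textbf{Step 2: compare with $k/\widehat{\alpha}$.} Split according to \Cref{lem:relaxation-to-uniform}.
\begin{itemize}
\item If $2k\le n+1$, then $k/\widehat{\alpha}=\frac{k(n-k)}{n-1}$. Since $k(n-k)-(k-1)(n-k+1)=n-2k+1\ge0$, this term is the maximum.
\item If $2k\ge n+1$, then $k/\widehat{\alpha}=\frac{k(n-k+1)}{n+1}$. The inequality $\frac{k(n-k+1)}{n+1}\le\frac{(k-1)(n-k+1)}{n-1}$ is equivalent to $k(n-1)\le (k-1)(n+1)$, that is, to $n+1\le 2k$. So the contraction term is the maximum.
\end{itemize}

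\textbf{Step 3: the closed form.} The identity $k(n-k)-(k-1)(n-k+1)=n-2k+1$ also shows that $\max\left\{\frac{(k-1)(n-k+1)}{n-1},\frac{k(n-k)}{n-1}\right\}$ equals the case expression in the statement. This gives the theorem.

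The only real obstacle is Step 1, specifically recognizing $M/e$ for $e\in C$ and bounding its resurgence. Invoking \Cref{thm:asym-order} handles it cleanly.
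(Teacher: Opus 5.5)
Your proof is correct and takes essentially the same approach as the paper: show $M/e\cong \mathrm{U}_{k-1,n-1}$ for $e\notin C$, bound the remaining single-element contractions via \Cref{thm:asym-order}, then combine \Cref{prop:asym-single-contraction-matroid} with \Cref{lem:relaxation-to-uniform}. Your separate treatment of $k=1$ and the explicit numerical comparison in Step 2 fill in details that the paper leaves implicit.
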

\begin{proof}
Let $C$ be the $k$ size subset of $E$ so that $M=\mathrm{U}_{k,n}^-=\mathrm{U}_{k,n}^-(C)$. Let $e\in E\setminus C$.  
Let $A \subset E \setminus \{e\}$ be a ($k-1$)-element subset. Then,  $A \cup \{e\}$ is a $k$-element subset of $E$ and $A \cup \{e\} \neq C$ as $e \not\in C$. Therefore, $A \cup\{e\} \in \cB(M)$ which implies $A \in \cB(M/e)$. So, every subset of size $k-1$ is a basis of $M/e$, and hence, $M/e\cong \mathrm{U}_{k-1,n-1}$.  

Observe that for $f \in E$, $M/f$ is a rank $k-1$ matroid on ground set $E\setminus \{f\}$.  Therefore, by~\Cref{thm:asym-order}, $\widehat{\rho}(I(M/f)) \le \widehat{\rho}(I(\mathrm{U}_{k-1,n-1}))$ for every $f \in E$.  Since $M/e \cong \mathrm{U}_{k-1,n-1}$, then $$\widehat{\rho}_{c,1}(I(M))=\max\{\widehat{\rho}(I(M/f))~:~f\in E\}= \widehat{\rho}(I(\mathrm{U}_{k-1,n-1}))=\frac{(k-1)(n-k+1)}{n-1},$$  where the last equality follows from \Cref{prop:uniform-matroid}. Now, by~\Cref{prop:asym-single-contraction-matroid},  \begin{align*} \widehat{\rho}(I(M)) & =\max\left\lbrace \frac{k}{\widehat{\alpha}(I(M))},\widehat{\rho}_{c,1}(I(M))\right\rbrace \\ & =\max\left\{\frac{(k-1)(n-k+1)}{n-1},\frac{k}{\widehat{\alpha}(I(M))}\right\}=\begin{cases}
     \dfrac{k(n-k)}{n-1} & \text{ if } 2k\le {n+1}\\
    \dfrac{(k-1)(n-k+1)}{n-1} & \text{ if } 2k\ge n+1,
    \end{cases}\end{align*}
where the last equality follows by \Cref{lem:relaxation-to-uniform}.
\end{proof}

\subsection{An upper bound for the asymptotic resurgence of non-uniform matroids}
Recall that formulas for the asymptotic resurgence of the Stanley-Reisner and facet ideals of uniform matroids are given in \Cref{prop:uniform-matroid}.    In this section, we consider non-uniform matroids and obtain upper bounds for the asymptotic resurgence using  \Cref{thm:asym-order} and \Cref{thm:relaxation-to-uniform}.

\begin{Corollary}\label{cor:asym-upper-bound}
Let $M$ be a non-uniform matroid of rank $k$ on the ground set $E$ of size $n$. Then, 
\[\widehat{\rho}(I(M)) \le 
   \begin{cases}
     \dfrac{k(n-k)}{n-1} &  \text{if } 2k\le n+1\\[10 pt]
    \dfrac{(k-1)(n-k+1)}{n-1} & \text{if } 2k\ge n+1
    \end{cases}
    \text{ and }~  \widehat{\rho}(I_{\Delta(M)}) \le 
   \begin{cases}
     \dfrac{k(n-k)}{n-1} &  \text{if } n\le 2k+1\\[10 pt]
    \dfrac{(n-k-1)(k+1)}{n-1} & \text{if } n\ge 2k+1.
    \end{cases}
  \]
\end{Corollary}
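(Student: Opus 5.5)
The plan is to reduce both bounds to \Cref{thm:asym-order} and \Cref{thm:relaxation-to-uniform}. The key combinatorial input is that a non-uniform matroid precedes some almost-uniform matroid of the same rank in the weak order.

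\textbf{The facet ideal.} Since $M$ is non-uniform of rank $k$ on $E$ with $|E|=n$, we have $\cB(M)\subsetneq\binom{E}{k}$. So there is a $k$-subset $C\subseteq E$ that is not a basis of $M$. By \Cref{prop:almost-uniform-equivalence}, the collection of all $k$-subsets except $C$ is the basis set of the almost-uniform matroid $\mathrm{U}^-_{k,n}(C)$. Clearly $\cB(M)\subseteq\cB(\mathrm{U}^-_{k,n}(C))$, so $M\preceq \mathrm{U}^-_{k,n}(C)$ by \Cref{def: weak order}. Then \Cref{thm:asym-order} gives $\widehat{\rho}(I(M))\le\widehat{\rho}(I(\mathrm{U}^-_{k,n}(C)))$, and \Cref{thm:relaxation-to-uniform} evaluates the right side as the stated piecewise expression. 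If $k=n$ there would be nothing to prove, but $M$ non-uniform forces $k<n$ (and $k\ge 1$), so the hypothesis $k<n$ of \Cref{thm:relaxation-to-uniform} holds.

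\textbf{The Stanley--Reisner ideal.} By \Cref{prop:Alex duality facet}, $\widehat{\rho}(I_{\Delta(M)})=\widehat{\rho}(I(M^*))$. The dual $M^*$ has rank $n-k$. It is non-uniform, because $\cB(M^*)=\{E\setminus B : B\in\cB(M)\}$ misses $E\setminus C$. Applying the first part to $M^*$ with $k$ replaced by $n-k$ gives the following:
\begin{itemize}
\item if $2(n-k)\le n+1$, i.e.\ $n\le 2k+1$, the bound is $\frac{(n-k)k}{n-1}$;
\item if $2(n-k)\ge n+1$, i.e.\ $n\ge 2k+1$, the bound is $\frac{(n-k-1)(k+1)}{n-1}$.
\end{itemize}
These are exactly the stated bounds. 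For $k=0$ the matroid $M^*$ has rank $n$ and is uniform, which is excluded. For $n-k\ge 1$ the first part applies.

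\textbf{Main obstacle.} There is essentially none. The only points needing care are recognizing that a single missing basis yields an almost-uniform matroid above $M$, and translating the case conditions under duality correctly.
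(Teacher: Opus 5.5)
Your proposal is correct and follows the paper's proof. Both arguments pick a non-basis $k$-subset $C$, use $M\preceq \mathrm{U}^-_{k,n}(C)$ with \Cref{thm:asym-order} and \Cref{thm:relaxation-to-uniform}, and then pass to the Stanley--Reisner ideal via \Cref{prop:Alex duality facet}; the paper phrases the dual step as $M^*\preceq \mathrm{U}^-_{k,n}(C)^*=\mathrm{U}^-_{n-k,n}(E\setminus C)$, which is the same as your re-application of the first part to the non-uniform matroid $M^*$.
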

\begin{proof}
Since $M$ is a non-uniform matroid of rank $k$ on the ground set $E$ of size $n$, there is some $k$-element subset $C$ of $E$ which is not a basis. So, $M\preceq \mathrm{U}_{k,n}^-(C)$. 
 Then,  by \Cref{thm:asym-order} and \Cref{thm:relaxation-to-uniform},  \[\widehat{\rho}(I(M)) \le 
   \begin{cases}
     \frac{k(n-k)}{n-1} &  \text{if } 2k\le n+1\\
    \frac{(k-1)(n-k+1)}{n-1} & \text{if } 2k\ge n+1.
    \end{cases}
  \] Since $M\preceq \mathrm{U}_{k,n}^-(C)$, 
  we also have $M^*\preceq \mathrm{U}_{k,n}^-(C)^*$. Observe that $ \mathrm{U}_{k,n}^-(C)^*=\mathrm{U}_{n-k,n}^-(E\setminus C)$. 
  So, by \Cref{thm:asym-order} and \Cref{thm:relaxation-to-uniform},  \[\widehat{\rho}(I(M^*)) \le 
   \begin{cases}
     \frac{k(n-k)}{n-1} &  \text{if } n\le 2k+1\\
    \frac{(n-k-1)(k+1)}{n-1} & \text{if } n\ge 2k+1.
    \end{cases}
  \] Now, by \Cref{prop:Alex duality facet},  $\widehat{\rho}(I_{\Delta(M)})=\widehat{\rho}(I(M^*))$. This completes the proof. 
\end{proof}

The upper bound in \Cref{cor:asym-upper-bound} is sharp for a broad class of matroids, including those admitting a single-element contraction that is uniform; see also Tables~\ref{tbl:Rank3groundset5} -- \ref{tbl:Rank3groundset7}.  
When no such contraction exists, the bound can be further improved, as shown in the following theorem.

\begin{Theorem}\label{thm:asym-res-uniform-contraction}
Let $M$ be a non-uniform matroid of rank $k$ on the ground set $E$ of size $n$.  Then
\begin{enumerate}
    \item  If $M/e \cong \mathrm{U}_{k-1,n-1}$ for some $e \in E$, then $$\widehat{\rho}_{c,1}(I(M))=\frac{(k-1)(n-k+1)}{n-1} \quad \text{ and } \quad \widehat{\rho}(I(M))= \max\left\{\frac{(k-1)(n-k+1)}{n-1},\frac{k}{\widehat{\alpha}(I(M))}\right\}.$$ In particular, if $2k \ge n+1$, then $\widehat{\rho}(I(M))= \frac{(k-1)(n-k+1)}{n-1}$.
    \item If $M/e \not\cong \mathrm{U}_{k-1,n-1}$ for all $e \in E$, then $$\widehat{\rho}_{c,1}(I(M))\le \begin{cases}
     \dfrac{(k-1)(n-k)}{n-2} &  \text{if } 2k\le n+2\\[10 pt]
    \dfrac{(k-2)(n-k+1)}{n-2} & \text{if } 2k\ge n+2.
    \end{cases}$$
\end{enumerate}
\end{Theorem}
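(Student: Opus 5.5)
For part (a), I would argue exactly as in the proof of \Cref{thm:relaxation-to-uniform}. For every $f\in E$, the contraction $M/f$ is a matroid of rank at most $k-1$ on $E\setminus\{f\}$. If $f$ is a non-loop, $M/f$ has rank exactly $k-1$, and \Cref{thm:asym-order} gives $\widehat{\rho}(I(M/f))\le \widehat{\rho}(I(\mathrm{U}_{k-1,n-1}))$, since every rank $k-1$ matroid on $n-1$ elements precedes $\mathrm{U}_{k-1,n-1}$ in the weak order. Loops are excluded from $\widehat{\rho}_{c,1}$ by \Cref{def:rhocontraction}. By hypothesis $M/e\cong \mathrm{U}_{k-1,n-1}$, so the maximum is attained, and \Cref{prop:uniform-matroid} gives $\widehat{\rho}_{c,1}(I(M))=\frac{(k-1)(n-k+1)}{n-1}$. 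The formula for $\widehat{\rho}(I(M))$ then follows from \Cref{prop:asym-single-contraction-matroid}.

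For the final claim of (a), I would compare with \Cref{cor:asym-upper-bound}. When $2k\ge n+1$ and $M$ is non-uniform, that corollary gives $\widehat{\rho}(I(M))\le \frac{(k-1)(n-k+1)}{n-1}$. The formula just obtained gives the reverse inequality, so equality holds.

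For part (b), fix a non-loop $f\in E$ and set $N=M/f$, a matroid of rank $k-1$ on the $n-1$ elements of $E\setminus\{f\}$. By hypothesis $N$ is not uniform, so \Cref{cor:asym-upper-bound} applies to $N$ with $(k,n)$ replaced by $(k-1,n-1)$. This yields
\[
\widehat{\rho}(I(N))\le \frac{(k-1)(n-k)}{n-2} \ \text{ if } 2(k-1)\le n, \qquad \widehat{\rho}(I(N))\le \frac{(k-2)(n-k+1)}{n-2} \ \text{ if } 2(k-1)\ge n.
\]
These conditions are exactly $2k\le n+2$ and $2k\ge n+2$. Taking the maximum over all non-loops $f$ gives the stated bound on $\widehat{\rho}_{c,1}(I(M))$.

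The main obstacle is the degenerate range where \Cref{cor:asym-upper-bound} needs $k-1<n-1$ and $k-1\ge 1$. If $k=1$, then $M$ is non-uniform only when it has loops, and $M/f$ has rank $0$, which makes the bound trivial. The cases $n-1=k-1$ cannot occur, because we assume $\rk(M)<|E|$.

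A second point to check is that a contraction $M/f$ whose rank drops (when $f$ is a loop) never enters the maximum. This is guaranteed by the definition of $\widehat{\rho}_{c,1}$, or equivalently by the flat description in \Cref{prop:asym-single-contraction-matroid}.
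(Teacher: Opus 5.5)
Your proof is correct and follows the paper's argument. For (a) you compare each non-loop contraction $M/f$ with $\mathrm{U}_{k-1,n-1}$ via \Cref{thm:asym-order} and conclude with \Cref{prop:asym-single-contraction-matroid}, and for (b) you apply \Cref{cor:asym-upper-bound} to $M/f$ with $(k,n)$ replaced by $(k-1,n-1)$. The only small deviation is in the final claim of (a): you cite \Cref{cor:asym-upper-bound} directly, whereas the paper bounds $\frac{k}{\widehat{\alpha}(I(M))}\le\frac{k(n-k+1)}{n+1}\le\frac{(k-1)(n-k+1)}{n-1}$ using $I(M)\subseteq I(\mathrm{U}^-_{k,n}(C))$ and \Cref{lem:relaxation-to-uniform}; both rest on the same comparison with an almost-uniform matroid and are equally valid.
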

\begin{proof}
Observe that for $e \in E \setminus \loops(M)$, $M/e$ is a rank $k-1$ matroid on the ground set $E\setminus \{e\}$.  Therefore, by~\Cref{thm:asym-order}, $\widehat{\rho}(I(M/e)) \le \widehat{\rho}(I(\mathrm{U}_{k-1,n-1}))$ for every $e \in E \setminus \loops(M)$.  Since $M/e \cong \mathrm{U}_{k-1,n-1}$ for some $e \in E$, $\widehat{\rho}_{c,1}(I(M))=\max\{\widehat{\rho}(I(M/e))~:~e\in E \setminus \loops(M)\}= \widehat{\rho}(I(\mathrm{U}_{k-1,n-1}))=\frac{(k-1)(n-k+1)}{n-1}$, where the last equality follows from \Cref{prop:uniform-matroid}. Now, by~\Cref{prop:asym-single-contraction-matroid},  $$\widehat{\rho}(I(M))=\max\left\{\frac{(k-1)(n-k+1)}{n-1},\frac{k}{\widehat{\alpha}(I(M))}\right\}.$$ 

 Since $M$ is non-uniform, there is a $k$-subset $C$ of $E$ so that  $M \preceq \mathrm{U_{k,n}^-}(C)$. Therefore, $I(M) \subseteq I(\mathrm{U}_{k,n}^-(C))$, and hence,  $\widehat{\alpha}(I(M)) \ge \widehat{\alpha}(I(\mathrm{U}_{k,n}^-(C)))$.  When $2k \ge n+1$,  \Cref{lem:relaxation-to-uniform} implies that $$\frac{k}{\widehat{\alpha}(I(M))}\le \frac{k}{\widehat{\alpha}(I(\mathrm{U}_{k,n}^-(C)))}=\frac{k(n-k+1)}{n+1} \le  \frac{(k-1)(n-k+1)}{n-1},$$ and the assertion in part (a) follows. 
Finally, part (b) follows from \Cref{cor:asym-upper-bound}.
\end{proof}

\subsection{Asymptotic resurgence and paving matroids}
A matroid $M$ of rank $k$ is \emph{paving} if every circuit of $M$ has size at least $k$. 
Equivalently, every circuit of $M$ has size $k$ or $k+1$. A matroid $M$ is \emph{sparse paving} if both $M$ and its dual $M^*$ are paving.  Almost-uniform matroids are paving, and the dual of an almost-uniform matroid is another almost-uniform matroid.  Hence, almost-uniform matroids are sparse paving matroids.  We show that \Cref{thm:relaxation-to-uniform} extends verbatim to certain sparse paving matroids of rank $k$ on $n$ elements.

There is a well-known characterization of sparse paving matroids in terms of their \textit{circuit-hyperplanes} which is very useful.  Suppose $M$ be a matroid of rank $k$ on a ground set $E$.  A \textit{hyperplane} of $M$ is a flat of rank $k-1$.  A \textit{circuit-hyperplane} of $M$ is a subset of $E$ that is both a circuit of $M$ and a hyperplane of $M$.  We denote the set of circuit-hyperplanes of $M$ by $\mathcal{CH}(M)$.  If $M$ is sparse paving of rank $k$, the circuit-hyperplanes of $M$ are exactly the subsets of size $k$ of $E$ that are not bases of $M$.  It turns out that sparse paving matroids can be characterized by their circuit-hyperplanes.  Equivalent versions of this characterization have been well-known for quite some time (e.g. \cite{Knuth-1974}); we use one from \cite{MPTSZ}.

\begin{Proposition}[{\cite[Lemma~2 and Proposition~3]{MPTSZ}}]\label{prop:SparsePavingCharacterization}
A collection of $k$-element subsets $\mathcal{CH}$ of a set $E$ with $n\ge k$ elements is the set of circuit-hyperplanes of a sparse paving matroid if and only if for every $C,C'\in\mathcal{CH}$, $|C\cap C'|\le k-2$.
\end{Proposition}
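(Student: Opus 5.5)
The plan is to prove the two directions separately, using only the basis axioms and elementary facts about flats. Throughout I assume $n>k$; when $n=k$ the only $k$-subset is $E$ itself, and that degenerate case has to be handled by hand or excluded.

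\emph{Necessity.} Let $M$ be sparse paving of rank $k$, and let $C\neq C'$ be circuit-hyperplanes. Suppose, for contradiction, that $|C\cap C'|=k-1$.
\begin{enumerate}
\item $C\cap C'$ is a proper subset of the circuit $C$, so it is independent and has rank $k-1$.
\item $C$ and $C'$ are flats, so $C\cap C'$ is also a flat. It is a proper subflat of the hyperplane $C$, so its rank is at most $k-2$.
\end{enumerate}
These two facts contradict each other, so $|C\cap C'|\le k-2$. This direction should be short.

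\emph{Sufficiency: the basis axioms.} Given $\mathcal{CH}$ whose members pairwise meet in at most $k-2$ elements, define $\cB$ to be the set of $k$-subsets of $E$ not in $\mathcal{CH}$. The tool used repeatedly is the following observation.

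\emph{Key observation.} If $\mathcal{F}$ is a family of $k$-sets that pairwise meet in exactly $k-1$ elements, then at most one member of $\mathcal{F}$ lies in $\mathcal{CH}$.

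Nonemptiness of $\cB$ follows because $n>k$ lets us pick two $k$-sets sharing $k-1$ elements, and at most one of them is in $\mathcal{CH}$.

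For basis exchange, take $B,B'\in\cB$ and $x\in B\setminus B'$.
\begin{itemize}
\item If $|B\setminus B'|=1$, the unique $y\in B'\setminus B$ gives $(B\setminus\{x\})\cup\{y\}=B'\in\cB$.
\item Otherwise there are at least two candidates $y\in B'\setminus B$. The sets $(B\setminus\{x\})\cup\{y\}$ pairwise share $B\setminus\{x\}$, so by the key observation at most one of them is in $\mathcal{CH}$. Hence some candidate lies in $\cB$.
\end{itemize}
So $\cB$ is the set of bases of a matroid $M$ of rank $k$.

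\emph{Sufficiency: $M$ is sparse paving with circuit-hyperplanes $\mathcal{CH}$.}
\begin{enumerate}
\item Every $(k-1)$-set $A$ is independent. Its $n-k+1\ge 2$ one-element extensions pairwise meet in $A$, so by the key observation at least one is a basis. Hence every circuit has size at least $k$, and $M$ is paving.
\item Each $C\in\mathcal{CH}$ is dependent of size $k$ and all its $(k-1)$-subsets are independent, so $C$ is a circuit of rank $k-1$.
\item Each $C\in\mathcal{CH}$ is a flat. For $f\notin C$, the sets $(C\setminus\{e\})\cup\{f\}$ with $e\in C$ pairwise share $k-1$ elements. By the key observation one of them is a basis, so $C\cup\{f\}$ has rank $k$. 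Thus $C$ is a circuit-hyperplane.
\item Conversely, a circuit-hyperplane is a $k$-element circuit, so it is a dependent $k$-set, which by definition of $\cB$ lies in $\mathcal{CH}$.
\item $M^*$ is paving. Every $(k+1)$-set has $k+1\ge 2$ many $k$-subsets pairwise sharing $k-1$ elements, so by the key observation it contains a basis. Therefore every hyperplane has size at most $k$. Circuits of $M^*$ are complements of hyperplanes of $M$, so they have size at least $n-k=\rk(M^*)$, which is exactly the paving condition for $M^*$.
\end{enumerate}

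The main obstacle is bookkeeping rather than any deep step: each time a basis is needed inside some small configuration, I must check that the key observation applies with at least two candidates, and confirm that $n>k$ (or the excluded degenerate case) guarantees this.
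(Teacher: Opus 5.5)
The paper does not prove this statement; it quotes it from \cite{MPTSZ}, so there is no internal argument to compare with. Your proof is correct and self-contained. It is the standard basis-exchange argument, and one fact does all the work: a family of $k$-sets that pairwise meet in $k-1$ elements contains at most one member of $\mathcal{CH}$. The following steps are all sound:
\begin{enumerate}
\item the verification of the basis axioms;
\item the proof that $M$ is paving, via independence of every $(k-1)$-set;
\item the identification $\mathcal{CH}(M)=\mathcal{CH}$;
\item the proof that $M^*$ is paving, via hyperplanes of $M$ having size at most $k$ and circuits of $M^*$ being complements of hyperplanes.
\end{enumerate}

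A few points of bookkeeping should be tightened.
\begin{enumerate}
\item The case $n=k$ cannot be ``handled by hand''. Read literally with $n\ge k$, the statement is false there for $\mathcal{CH}=\{E\}$. A circuit-hyperplane $C$ satisfies $\rk_M(C)=\rk(M)-1<\rk_M(E)$, so $C\neq E$. Likewise $k=0$ with $\mathcal{CH}=\{\emptyset\}$ fails, since $\emptyset$ is never a circuit. So $n>k$ and $k\ge 1$ are genuinely needed hypotheses, not just conveniences. Your nonemptiness step also silently uses $k\ge 1$.
\item In the step showing each $C\in\mathcal{CH}$ is a flat, the family $\{(C\setminus\{e\})\cup\{f\} : e\in C\}$ has only $k$ members. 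So your key observation produces a basis only when $k\ge 2$. It is cleaner to put $C$ itself into the family. Each set $(C\setminus\{e\})\cup\{f\}$ meets $C$ in $k-1$ elements, so none of them lies in $\mathcal{CH}$ and all of them are bases. This also covers $k=1$.
\item In the necessity direction, say why the rank is $k$. If $\mathcal{CH}\neq\emptyset$, a circuit $C$ with $\rk_M(C)=\rk(M)-1$ has exactly $\rk(M)$ elements, because $\rk_M(C)=|C|-1$ for a circuit. If $\mathcal{CH}=\emptyset$ there is nothing to prove. This direction also holds in every matroid, not just sparse paving ones, which you may want to note.
\end{enumerate}
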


For our next result, we put $\C_i(M):=\{C\in \C(M): |C|=i\}$ and $A_i(M)=\bigcup\limits_{C\in \C_i(M)}C$.

\begin{Proposition}\label{cor:dual-paving-not-covering}
Let $M$ be a non-uniform matroid of rank $k$ on the ground set $E$ of size $n$.
\begin{enumerate}
    \item If $M$ is paving, $2k> n+1$, and $A_k(M)\neq E$,
    then $\widehat{\rho}(I(M))=\frac{(k-1)(n-k+1)}{n-1}$.  
\item If $M^*$ is paving, $2k> n+1$, and $A_k(M)\neq E$,
then $\widehat{\alpha}(I(M))=\widehat{\alpha}(I(\mathrm{U}^{-}_{k,n}))=\frac{n+1}{n-k+1}$. 
\item If $M^*$ is paving, $2k\le n+1$, and $A_{n-k}(M^*)\neq E$, then $\widehat{\alpha}(I(M))=\widehat{\alpha}(I(\mathrm{U}^{-}_{k,n}))=\frac{n-1}{n-k}$\\ and $\widehat{\rho}(I(M))=\frac{k}{\widehat{\alpha}(I(M))}=\frac{k(n-k)}{n-1}$. 
\end{enumerate}

\noindent In particular, if $M$ is sparse paving and $E$ is not covered by the circuit-hyperplanes of either $M$ or $M^*$,
then
\[
\widehat{\alpha}(I(M))=\widehat{\alpha}(I(\mathrm{U}_{k,n}^-))\mbox{ and } \widehat{\rho}(I(M))=\widehat{\rho}(I(\mathrm{U}_{k,n}^-)).
\]
\end{Proposition}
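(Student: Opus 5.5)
The plan is to handle the three parts separately. Parts (a) and (c) reduce to results already proved: \Cref{thm:asym-res-uniform-contraction} and the weak-order comparison. Part (b) needs an explicit point of $\SP(I(M))$, mirroring the proof of \Cref{lem:relaxation-to-uniform}.

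\textbf{(a)} Pick $e\in E\setminus A_k(M)$.
\begin{itemize}
\item $e$ is not a loop. Indeed, $M$ is non-uniform, so $k\ge 1$ and a $k$-subset exists that is not a basis. Since $M$ is paving, every circuit has size at least $k$; we need $k\ge 2$ here, and the degenerate case $k=1$ is trivial.
\item $M/e\cong \mathrm{U}_{k-1,n-1}$. Take a $(k-1)$-subset $A\subseteq E\setminus\{e\}$. If $A\cup\{e\}$ were not a basis, it would contain a circuit. That circuit has size at least $k$, so it would be all of $A\cup\{e\}$, a $k$-circuit containing $e$. This contradicts $e\notin A_k(M)$.
\end{itemize}
Then \Cref{thm:asym-res-uniform-contraction}(a), in the case $2k\ge n+1$, gives $\widehat{\rho}(I(M))=\frac{(k-1)(n-k+1)}{n-1}$.

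\textbf{(b)} The lower bound comes from the weak order. Choose a $k$-subset $C$ that is not a basis, so $M\preceq \mathrm{U}^-_{k,n}(C)$. Then $I(M)\subseteq I(\mathrm{U}^-_{k,n}(C))$, so $\widehat{\alpha}(I(M))\ge \frac{n+1}{n-k+1}$ by \Cref{lem:relaxation-to-uniform}.

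For the upper bound, pick $d\notin A_k(M)$ and take the vector ${\bf u}_d=\frac{1}{n-k+1}\bigl(2{\bf e}_d+\sum_{i\neq d}{\bf e}_i\bigr)$ from the proof of \Cref{lem:relaxation-to-uniform}. Its coordinate sum is $\frac{n+1}{n-k+1}$. I would check the inequalities of \Cref{cor:SPIM} as follows.
\begin{itemize}
\item Since $M^*$ is paving of rank $n-k$, every circuit $D$ of $M^*$ has size $n-k$ or $n-k+1$.
\item If $|D|=n-k+1$, then $\langle\chi_D,{\bf u}_d\rangle\ge 1$ automatically.
\item If $|D|=n-k$, then $D$ is a circuit-hyperplane of $M^*$. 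Its complement $E\setminus D$ is a cocircuit of $M^*$, i.e. a circuit of $M$ of size $k$. Hence $d\notin E\setminus D$, so $d\in D$ and $\langle\chi_D,{\bf u}_d\rangle=1$.
\end{itemize}
So ${\bf u}_d\in\SP(I(M))$, and \Cref{thm:SPWaldschmidt} gives equality.

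\textbf{(c)} The lower bound $\widehat{\alpha}(I(M))\ge\frac{n-1}{n-k}$ again comes from $M\preceq\mathrm{U}^-_{k,n}(C)$ and \Cref{lem:relaxation-to-uniform}. For the upper bound, pick $c\notin A_{n-k}(M^*)$ and take ${\bf v}_c=\frac{1}{n-k}\sum_{i\ne c}{\bf e}_i$. For each circuit $D$ of $M^*$, we have $|D|\ge n-k$, and
\[
\langle\chi_D,{\bf v}_c\rangle=\frac{|D|-[c\in D]}{n-k},
\]
where $[c\in D]$ is $1$ if $c\in D$ and $0$ otherwise. This is less than $1$ only if $|D|=n-k$ and $c\in D$, which is excluded by the choice of $c$. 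Hence ${\bf v}_c\in\SP(I(M))$ and $\widehat{\alpha}(I(M))=\frac{n-1}{n-k}$.

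For the resurgence, use \Cref{prop:asym-single-contraction-matroid}. Each $M/e$ with $e$ not a loop has rank $k-1$ on $n-1$ elements, so \Cref{thm:asym-order} and \Cref{prop:uniform-matroid} give $\widehat{\rho}_{c,1}(I(M))\le\frac{(k-1)(n-k+1)}{n-1}$. Moreover
\[
k(n-k)-(k-1)(n-k+1)=n-2k+1\ge 0,
\]
so the maximum in \Cref{prop:asym-single-contraction-matroid} is $\frac{k}{\widehat{\alpha}(I(M))}=\frac{k(n-k)}{n-1}$.

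\textbf{The ``in particular'' statement.} For sparse paving $M$, the $k$-circuits of $M$ are exactly its circuit-hyperplanes, so $A_k(M)$ is their union; likewise $A_{n-k}(M^*)$ is the union of the circuit-hyperplanes of $M^*$. If $2k>n+1$, parts (a) and (b) apply. If $2k\le n+1$, part (c) applies. In either case, comparing with \Cref{lem:relaxation-to-uniform} and \Cref{thm:relaxation-to-uniform} gives the claimed equalities.

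The main obstacle is the matroid-duality bookkeeping in (b). One must correctly identify the $(n-k)$-circuits of $M^*$ with complements of $k$-circuits of $M$ (via hyperplane–cocircuit complementation). Beyond that, the argument is bookkeeping.
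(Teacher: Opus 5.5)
\textbf{Verdict: gap in (b).} Your parts (a) and (c) are correct. Part (a) is the paper's argument; $2k>n+1$ with $n\ge k$ already forces $k\ge 2$, so the $k=1$ aside is unnecessary. In (c) you bound $\widehat{\rho}_{c,1}(I(M))$ directly via \Cref{thm:asym-order} and \Cref{prop:uniform-matroid}, instead of quoting \Cref{cor:asym-upper-bound} and \GHV{}; this amounts to the same thing.

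\textbf{The failing step.} In (b) you assert that a circuit $D$ of $M^*$ with $|D|=n-k$ is a circuit-hyperplane of $M^*$. In a paving matroid of rank $r$, an $r$-element circuit has rank $r-1$, so its closure is a hyperplane. That hyperplane can be strictly larger than $D$, though. $D$ is forced to be closed when $M$ is also paving (i.e.\ $M^*$ is sparse paving), but not in general. Otherwise $E\setminus D$ is only a non-basis $k$-subset of $M$, and it may properly contain a smaller circuit of $M$. So $d\notin A_k(M)$ does not yield $d\in D$. The paper's proof makes the same identification (``the circuits of $M^*$ of cardinality $n-k$ are exactly the complements of circuits of $M$ of cardinality $k$'').

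\textbf{Counterexample to (b) as stated.} Let $M^*$ be the rank-$2$ matroid on $E=\{1,\ldots,6\}$ in which $\{1,2,3\}$ is a parallel class and $4,5,6$ are parallel to nothing, and let $M$ be its dual.
\begin{itemize}
\item Here $k=4$, $n=6$, and $\cB(M)$ consists of the $4$-subsets not containing $\{4,5,6\}$.
\item $M$ is non-uniform, $M^*$ is paving, and $2k=8>7=n+1$.
\item The circuits of $M$ are $\{4,5,6\}$ and the three sets $\{1,2,3,x,y\}$ with $\{x,y\}\subset\{4,5,6\}$, so $A_4(M)=\emptyset\neq E$.
\item Since $\{1,2\},\{1,3\},\{2,3\},\{4,5,6\}$ are circuits of $M^*$, every ${\bf a}\in\SP(I(M))$ satisfies $\sum_i a_i\ge\frac32+1=\frac52$.
\item The point $(\frac12,\frac12,\frac12,\frac13,\frac13,\frac13)$ lies in $\SP(I(M))$ and attains $\frac52$.
\end{itemize}
Hence $\widehat{\alpha}(I(M))=\frac52\neq\frac73=\frac{n+1}{n-k+1}$. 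Concretely, $\langle\chi_{\{2,3\}},{\bf u}_1\rangle=\frac23<1$.

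\textbf{No choice of $d$ can work here.} If $M$ has a circuit of size less than $k$, every element lies in some non-basis $k$-subset $T$ of $M$. Then $E\setminus T$ is an $(n-k)$-circuit of $M^*$ missing that element, so ${\bf u}_d\notin\SP(I(M))$ for every $d$.

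\textbf{What survives.} Statement (b) needs the extra hypothesis that $M$ is paving. With it, every dependent $k$-subset of $M$ is a circuit and your duality step is valid. This is exactly the setting of the ``in particular'' clause, which your argument (like the paper's) does establish.
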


\begin{proof}
Let $M$ be a paving matroid of rank $k$, where $2k>n+1$.  Suppose that $A_k(M)\neq E$.  Then, there is an element $e\in E$ which is not in any of the circuits of $M$ of size $k$.  Since $M$ is paving, it follows that all the circuits of $M/e$ must have size $k$.  Since $\rk(M/e)=k-1$, it follows that $M/e\cong \mathrm{U}_{k-1,n-1}$.  Hence, $\widehat{\rho}(I(M))=\frac{(k-1)(n-k+1)}{n-1}$ by \Cref{thm:asym-res-uniform-contraction} (a).  This completes the proof of part (a).

Now, suppose $M$ has rank $k$ and that $M^*$ is paving.  Since $M\preceq \mathrm{U}_{k,n}^-$, $I(M)\subseteq I(U^{-}_{k,n})$ and so $\widehat{\alpha}(I(M))\ge \widehat{\alpha}(I(\mathrm{U}_{k,n}^-))$.  For (b) and the first part of (c), we need to show that $\widehat{\alpha}(I(M))\le \widehat{\alpha}(I(\mathrm{U}_{k,n}^-))$ under the given hypotheses.

First, suppose that $A_k(M)\neq E$ and $2k>n+1$.  The circuits of $M^*$ of cardinality $n-k$ are exactly the complements of circuits of $M$ of cardinality $k$.  Thus, $\bigcap\limits_{C\in C_{n-k}(M^*)} C=E\setminus A_k(M)$, which is non-empty since $A_k(M)\neq E$. Let $d\in \bigcap\limits_{C\in C_{n-k}(M^*)} C$.  We now follow the same argument as in the proof of \Cref{lem:relaxation-to-uniform}.  Define 
\[
{\bf u}_d=\frac{2}{n-k+1}{\bf e}_d+\sum_{i\neq d} \frac{1}{n-k+1} {\bf e}_i.
\]
We claim ${\bf u}_d\in \SP(I(M))$.  If $C\in \C_{n-k}(M^*)$, then $d\in C$ and so
$
\langle \chi_C,{\bf u}_d\rangle=1.
$
If $C\in\C_{n-k+1}(M^*)$, then 
$
\langle \chi_C,{\bf u}_d\rangle\ge 1.
$
As $M^*$ is paving of rank $n-k$ (so every circuit of $M^*$ has cardinality $n-k$ or $n-k+1$), we deduce that ${\bf u}_d\in \SP(I(M))$ by \Cref{cor:SPIM}.  By \Cref{thm:SPWaldschmidt}, $\widehat{\alpha}(I(M))\le \frac{2}{n-k+1} + \sum\limits_{i\neq d}\frac{1}{n-k+1}=\frac{n+1}{n-k+1}$.  As $\widehat{\alpha}(I(\mathrm{U}^-_{k,n}))=\frac{n+1}{n-k+1}$ when $2k>n+1$ by \Cref{lem:relaxation-to-uniform}, this completes the proof of (b).

Now, suppose that $A_{n-k}(M^*)\neq E$ and $2k\le n+1$.  Then, there is some $c\in E$ so that $c\notin C$ for all $C\in \C_{n-k}(M^*)$.  We claim that ${\bf v}_c=\frac{1}{n-k}\sum\limits_{i\neq c}{\bf  e}_i \in \SP(I(M))$.   If $C\in \C_{n-k}(M^*)$ then $\langle \chi_C,{\bf v}_c\rangle=1$.  If $C\in \C_{n-k+1}(M^*)$ then $\langle \chi_C,{\bf v}_c\rangle\ge 1$.  As $M^*$ is paving, we conclude that ${\bf v}_c\in \SP(I(M))$ and therefore, by \Cref{thm:SPWaldschmidt}, $\widehat{\alpha}(I(M))\le \frac{n-1}{n-k}$.  As $\widehat{\alpha}(I(\mathrm{U}^-_{k,n}))=\frac{n-1}{n-k}$ when $2k\le n+1$ by \Cref{lem:relaxation-to-uniform}, we obtain that $\widehat{\alpha}(I(M))=\widehat{\alpha}(I(\mathrm{U}^-_{k,n}))$, as desired.  

To complete the proof of (c), observe that since $M$ is a non-uniform matroid, by \Cref{cor:asym-upper-bound}, $\widehat{\rho}(I(M)) \le \frac{k(n-k)}{n-1}$.   By \GHV{}, $\widehat{\rho}(I(M)) \ge \frac{k}{\widehat{\alpha}(I(M))}=\frac{k(n-k)}{n-1}$, so $\widehat{\rho}(I(M))=\frac{k(n-k)}{n-1}$.

Finally, the last claim follows immediately from (a)-(c).
\end{proof}

\begin{Remark}
It is straightforward to construct sparse paving matroids satisfying the conditions of \Cref{cor:dual-paving-not-covering}.  For instance, given a set $E$ of size $n$, we can pick two elements $a,b\in E$ and a collection $\mathcal{CH}=\{C_1,\ldots,C_\ell\}$ of subsets of cardinality $k<n$ satisfying that $|C_i\cap C_j|\le k-2$ for all $1\le i<j\le n$, $a\in \cap_{i=1}^\ell C_i$ and $b\notin\cup_{i=1}^\ell C_i$.  Then $\mathcal{CH}$ is the collection of circuit-hyperplanes of a rank $k$ sparse paving matroid $M$ (by \Cref{prop:SparsePavingCharacterization}) and $E$ is not covered by the circuit-hyperplanes of $M$ or $M^*$.
\end{Remark}

We now prove that if $M$ is a matroid of rank $k$ on a ground set of size $n \ge 2k$ whose dual is paving, then the asymptotic resurgence of $I(M)$ achieves \GHV{}.  That is, we can drop the hypothesis $A_{n-k}(M^*)\neq E$ in part (c) of \Cref{cor:dual-paving-not-covering} and retain the conclusion that the Waldschmidt constant of $I(M)$ determines its asymptotic resurgence.  We begin by establishing an upper bound on the Waldschmidt constant of the facet ideal of matroids whose dual is paving.

\begin{Lemma}\label{lem:walds-bound-dual-paving}
Let $M$ be a matroid of rank $k$ on the ground set $E$ of size $n$ so that $M^*$ is paving. Then, \[\widehat{\alpha}(I(M)) \le \frac{n}{n-k}.\] Moreover, if $M/e\cong  \mathrm{U}_{k-1,n-1}$ for some $e\in E$, and $n +1 \ge 2k$, then $\widehat{\alpha}(I(M)) \le \frac{(n-1)k}{(k-1)(n-k+1)}$.    
\end{Lemma}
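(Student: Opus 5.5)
The plan is to exhibit explicit points of $\SP(I(M))$ and read off upper bounds from \Cref{thm:SPWaldschmidt}, using \Cref{cor:SPIM} to describe $\SP(I(M))$. Since $M^*$ is paving of rank $n-k$, every circuit of $M^*$ has size $n-k$ or $n-k+1$, so each defining inequality $\langle\chi_C,\mathbf{a}\rangle\ge 1$ involves at least $n-k$ coordinates.

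For the first bound, take $\mathbf{a}=\frac{1}{n-k}\sum_{i\in E}\mathbf{e}_i$. For every circuit $C$ of $M^*$ we have $\langle\chi_C,\mathbf{a}\rangle=\frac{|C|}{n-k}\ge 1$. Hence $\mathbf{a}\in\SP(I(M))$, and therefore $\widehat{\alpha}(I(M))\le \frac{n}{n-k}$.

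For the second bound, suppose $M/e\cong\mathrm{U}_{k-1,n-1}$. We use $(M/e)^*=M^*\backslash e$, so $M^*\backslash e\cong \mathrm{U}_{n-k,n-1}$. The circuits of $M^*$ avoiding $e$ are exactly the circuits of $M^*\backslash e$, and these all have size $n-k+1$. Circuits of $M^*$ containing $e$ have size at least $n-k$, hence contain at least $n-k-1$ elements other than $e$.

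Now take $\mathbf{u}_e=\frac{2}{n-k+1}\mathbf{e}_e+\frac{1}{n-k+1}\sum_{i\ne e}\mathbf{e}_i$, which is the same vector used in \Cref{lem:relaxation-to-uniform}. A circuit $C$ avoiding $e$ gives $\langle\chi_C,\mathbf{u}_e\rangle=\frac{n-k+1}{n-k+1}=1$. A circuit $C$ containing $e$ gives $\langle\chi_C,\mathbf{u}_e\rangle\ge \frac{2+(n-k-1)}{n-k+1}=1$. So $\mathbf{u}_e\in\SP(I(M))$, and \Cref{thm:SPWaldschmidt} yields $\widehat{\alpha}(I(M))\le\frac{n+1}{n-k+1}$.

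It remains to compare this with the claimed bound. We have
\[
(n-1)k-(n+1)(k-1)=n+1-2k\ge 0,
\]
which uses the hypothesis $n+1\ge 2k$. Hence $\frac{n+1}{n-k+1}\le\frac{(n-1)k}{(k-1)(n-k+1)}$, and the second bound follows.

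The only delicate step is identifying the sizes of the circuits of $M^*$ that avoid $e$, via the duality $(M/e)^*=M^*\backslash e$. Everything else is direct verification.
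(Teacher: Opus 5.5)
Your proof is correct. The first bound is proved exactly as in the paper. For the second bound you use a different test point.

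The paper takes $\frac{1}{n-k+1}\sum_{i\neq e}\mathbf{e}_i+\frac{n-1}{(n-k+1)(k-1)}\mathbf{e}_e$, whose coordinate sum is exactly $\frac{(n-1)k}{(k-1)(n-k+1)}$. There the hypothesis $n+1\ge 2k$ is used to check the circuits through $e$, which give $\frac{k(n-k)}{(n-k+1)(k-1)}\ge 1$.

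Since $\frac{n-1}{k-1}\ge 2$ exactly when $n+1\ge 2k$, the paper's point is your $\mathbf{u}_e$ plus a nonnegative multiple of $\mathbf{e}_e$; the two coincide when $n+1=2k$. So your point is coordinatewise smaller and gives the sharper estimate $\widehat{\alpha}(I(M))\le\frac{n+1}{n-k+1}$. Moreover, your membership check uses only that $M^*$ is paving and $M^*\backslash e\cong\mathrm{U}_{n-k,n-1}$. The hypothesis $n+1\ge 2k$, together with the implicit $k\ge 2$, enters only in your final numerical comparison.

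What each route buys:
\begin{itemize}
\item The paper's choice delivers, with no extra step, exactly the inequality $\frac{k}{\widehat{\alpha}(I(M))}\ge\frac{(k-1)(n-k+1)}{n-1}$ needed in \Cref{thm:asym-res-dual-paving}.
\item Yours gives a stronger bound valid for all $n$. It serves that theorem equally well, since $\frac{k(n-k+1)}{n+1}\ge\frac{(k-1)(n-k+1)}{n-1}$ if and only if $n+1\ge 2k$.
\item For non-uniform $M$ with $2k\ge n+1$, your bound combines with $\widehat{\alpha}(I(M))\ge\widehat{\alpha}(I(\mathrm{U}^-_{k,n}))$ from \Cref{lem:relaxation-to-uniform} to give the exact value $\frac{n+1}{n-k+1}$. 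This is in the spirit of \Cref{cor:dual-paving-not-covering}(b).
\end{itemize}
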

\begin{proof}
As in the proofs of \Cref{lem:relaxation-to-uniform} and \Cref{cor:dual-paving-not-covering}, we use $\SP(I(M))$.  In particular, we claim that ${\bf v}=\frac{1}{n-k}\sum\limits_{i=1}^n{\bf  e}_i \in \SP(I(M))$.  Since $M^*$ is a paving matroid of rank $n-k$, $|C| \ge n-k$ for all $C \in \mathcal{C}(M^*)$.  So,  $\langle \chi_{C},{\bf v}\rangle =\frac{|C|}{n-k} \ge 1$ for all $C \in \mathcal{C}(M^*)$. Therefore, ${\bf v} \in \SP(I(M))$ by \Cref{cor:SPIM}, and hence, by \Cref{thm:SPWaldschmidt}, $\widehat{\alpha}(I(M)) \le \frac{n}{n-k}$. 

Suppose $M/f\cong  \mathrm{U}_{k-1,n-1}$ for some $f \in E$ and $n+1 \ge 2k$. Note that  $(M/f)^*=M^* \setminus f \cong \mathrm{U}_{n-k,n-1}$, i.e.,  the circuits of $M^*$ that do not contain $f$ are precisely the $n-k+1$ size subsets of $E\setminus \{f\}$. We claim that ${\bf u}_f=\frac{1}{n-k+1}\sum\limits_{i\neq f} {\bf e}_i+\frac{n-1}{(n-k+1)(k-1)} {\bf e}_f \in \SP(I(M))$.   Let $C \in \mathcal{C}(M^*)$. If $f \not\in C$, then $|C|=n-k+1$ and $\langle\chi_C,{\bf u}_f\rangle =\frac{|C|}{n-k+1}=1$. If $f \in C$, then $$\langle\chi_C,{\bf u}_f\rangle =\frac{|C|-1}{n-k+1}+\frac{n-1}{(n-k+1)(k-1)} \ge \frac{n-k-1}{n-k+1}+\frac{n-1}{(n-k+1)(k-1)} = \frac{k(n-k)}{(n-k+1)(k-1)}\ge  1,$$  where the last inequality follows as $n+1 \ge 2k$. This implies ${\bf u}_f \in \SP(I(M)),$ and hence, by \Cref{thm:SPWaldschmidt}, $\widehat{\alpha}(I(M)) \le \frac{(n-1)k}{(k-1)(n-k+1)}$. 
\end{proof}

We now prove that the facet ideal of a matroid whose dual is paving achieves the GHV lower bound whenever $n \ge 2k$. 
 
\begin{Theorem}\label{thm:asym-res-dual-paving}
Let $M$ be a non-uniform matroid of rank $k$ on the ground set $E$ of size $n$ so that $M^*$ is paving.  If $n \ge 2k$, then $I(M)$ achieves equality in \GHV{}.  That is,  
\[
\widehat{\rho}(I(M))=\frac{k}{\widehat{\alpha}(I(M))}.
\]
\end{Theorem}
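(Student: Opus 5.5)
By \Cref{prop:asym-single-contraction-matroid} we have $\widehat{\rho}(I(M))=\max\{k/\widehat{\alpha}(I(M)),\widehat{\rho}_{c,1}(I(M))\}$. Since \GHV{} always gives $\widehat{\rho}(I(M))\ge k/\widehat{\alpha}(I(M))$, it suffices to show
\[
\widehat{\rho}_{c,1}(I(M))\le \frac{k}{\widehat{\alpha}(I(M))}.
\]
The plan is to bound the right-hand side from below using \Cref{lem:walds-bound-dual-paving}, and the left-hand side from above using \Cref{thm:asym-res-uniform-contraction}. We split into two cases according to whether some single-element contraction of $M$ is uniform.

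\emph{Case 1: $M/e\cong \mathrm{U}_{k-1,n-1}$ for some $e\in E$.} Then \Cref{thm:asym-res-uniform-contraction}(a) gives
\[
\widehat{\rho}_{c,1}(I(M))=\frac{(k-1)(n-k+1)}{n-1}.
\]
Since $n\ge 2k$ implies $n+1\ge 2k$, the second part of \Cref{lem:walds-bound-dual-paving} applies and gives $\widehat{\alpha}(I(M))\le \frac{(n-1)k}{(k-1)(n-k+1)}$. Hence
\[
\frac{k}{\widehat{\alpha}(I(M))}\ge \frac{(k-1)(n-k+1)}{n-1}=\widehat{\rho}_{c,1}(I(M)),
\]
which is what we need. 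Note that the cruder bound $\widehat{\alpha}(I(M))\le n/(n-k)$ is not enough here. Indeed, $k(n-k)/n\ge (k-1)(n-k+1)/(n-1)$ fails, for example, when $n=2k$ and $k>2$. This is why the refined estimate in \Cref{lem:walds-bound-dual-paving} is needed.

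\emph{Case 2: $M/e\not\cong \mathrm{U}_{k-1,n-1}$ for every $e\in E$.} Here \Cref{thm:asym-res-uniform-contraction}(b) applies; since $n\ge 2k$ we have $2k\le n+2$, so
\[
\widehat{\rho}_{c,1}(I(M))\le \frac{(k-1)(n-k)}{n-2}.
\]
On the other side, the first part of \Cref{lem:walds-bound-dual-paving} gives $\widehat{\alpha}(I(M))\le n/(n-k)$, so $k/\widehat{\alpha}(I(M))\ge k(n-k)/n$. It remains to check $k(n-k)/n\ge (k-1)(n-k)/(n-2)$. This reduces to $k(n-2)\ge (k-1)n$, that is, $n\ge 2k$, which is exactly our hypothesis.

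Combining the two cases gives $\widehat{\rho}_{c,1}(I(M))\le k/\widehat{\alpha}(I(M))$, and the theorem follows. Some small points should be double-checked. First, $\widehat{\rho}_{c,1}$ only ranges over non-loops, which is consistent with how \Cref{thm:asym-res-uniform-contraction} is stated. Second, the degenerate small cases (such as $k=1$, or $n-2\le 0$) should be checked, or handled directly via the rank-one and rank-two discussion (\Cref{lem:rank-two}). The main obstacle is Case 1, where the generic Waldschmidt bound is too weak. It is resolved by the tailored vector ${\bf u}_f$ in \Cref{lem:walds-bound-dual-paving}, which relies on $M^*$ being paving.
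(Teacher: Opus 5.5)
Your proof is correct and follows the paper's argument essentially step for step. You use the same case split on whether some single-element contraction $M/e$ is isomorphic to $\mathrm{U}_{k-1,n-1}$: in the first case \Cref{thm:asym-res-uniform-contraction}(a) is paired with the refined bound $\widehat{\alpha}(I(M))\le \frac{(n-1)k}{(k-1)(n-k+1)}$ from \Cref{lem:walds-bound-dual-paving}, and in the second case part (b) is paired with $\widehat{\alpha}(I(M))\le \frac{n}{n-k}$ and the equivalence $\frac{(k-1)(n-k)}{n-2}\le \frac{k(n-k)}{n}\iff n\ge 2k$.
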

 \begin{proof} 
Suppose that $M/e\cong  \mathrm{U}_{k-1,n-1}$ for some $e \in E$.  Then, when $n\ge 2k-1$, by \Cref{thm:asym-res-uniform-contraction} and \Cref{lem:walds-bound-dual-paving}, 
\[
\widehat{\rho}(I(M))= \max\left\{\frac{(k-1)(n-k+1)}{n-1},\frac{k}{\widehat{\alpha}(I(M))}\right\}=\frac{k}{\widehat{\alpha}(I(M))}.
\]
Next, suppose that $M/e \not\cong  \mathrm{U}_{k-1,n-1}$ for all $e \in E$. By \Cref{thm:asym-res-uniform-contraction}, we have $\widehat{\rho}_{c,1}(I(M))\le \frac{(k-1)(n-k)}{n-2}$ as long as $n\ge 2k-2$. On the other hand, $\frac{(k-1)(n-k)}{n-2} \le \frac{k(n-k)}{n} \le \frac{k}{\widehat{\alpha}(I(M))}$, where the first inequality holds if and only if $n \ge 2k$ and the second inequality follows from \Cref{lem:walds-bound-dual-paving}. Hence, by \Cref{prop:asym-single-contraction-matroid} $\widehat{\rho}(I(M))=\frac{k}{\widehat{\alpha}(I(M))}$. \qedhere
\end{proof}

\begin{Remark}
\Cref{thm:asym-res-dual-paving} is sharp in the sense that if $M$ is a matroid of rank $k$ on $n$ elements with a paving dual and $n\le 2k-1$, $I(M)$ does not necessarily achieve equality in \GHV{}.  Observe that, by \Cref{cor:dual-paving-not-covering}, sparse paving matroids of rank $k$ on $n$ elements with $n<2k-1$ whose circuits of cardinality $k$ do not cover the ground set (such as $\mathrm{U}_{k,n}^{-}$) do not achieve equality in \GHV{}.  In \Cref{app:sharp}, we further prove that for any ground set on $2k-1$ elements, with $k\ge 5$, there is a sparse paving matroid $M_{k,2k-1}$ of rank $k$ so that $I(M_{k,2k-1})$ does not achieve equality in \GHV{}.
\end{Remark}

We conclude the section with a conjecture about the asymptotic resurgence of the facet ideals of `most' matroids.  We follow the terminology of \cite{MNWW-2011} to make this precise.  Let $u(n)$ denote the number of \textit{unlabeled} matroids on $n$ elements.  A \textit{matroid property} $\mathcal{P}$ is a class of matroids closed under automorphism.  Let $\mathcal{P}_u(n)$ denote the set of all unlabeled matroids on $n$ elements with property $\mathcal{P}$.  We say that `asymptotically almost every matroids is $\mathcal{P}$' provided that $\lim\limits_{n\to\infty}\frac{|\mathcal{P}_u(n)|}{n}=1$.

Clearly, the initial degree, Waldschmidt constant, and asymptotic resurgence of facet ideals of matroids are all invariant under matroid automorphism, so equations involving these constants yield a matroid property.

\begin{Conjecture}\label{conj:asymptoticresurgencemostmatroids} 
Asymptotically almost every matroid has a facet ideal which achieves equality in \GHV{}. More precisely, given a matroid $M$, we say that $M$ has the matroid property $\mathcal{ASM}$ if $\widehat{\rho}(I(M))=\frac{\rk(M)}{\widehat{\alpha}(I(M))}$.  Then
\[
\lim_{n\to\infty}\frac{|\mathcal{ASM}_u(n)|}{u(n)}=1.
\]
\end{Conjecture}

We close this section by showing that if $\lim\limits_{n\to\infty}\frac{|\mathcal{ASM}_u(n)|}{u(n)}$ exists, then \Cref{conj:asymptoticresurgencemostmatroids} is implied by two conjectures in the literature~\cite[Conjectures 1.6, 1.10]{MNWW-2011}.

\begin{Proposition}
Suppose that $\lim\limits_{n\to\infty}\frac{|\mathcal{ASM}_u(n)|}{u(n)}$ exists.  Then \Cref{conj:asymptoticresurgencemostmatroids} is true provided that the following two conjectures are true:
\begin{enumerate}
\item[(1)] Asymptotically almost every matroid is sparse paving. (see discussion following \cite[Conjecture~1.6]{MNWW-2011}.
\item[(2)] Asymptotically almost every matroid on $n$ elements has rank between $(n-1)/2$ and $(n+1)/2$ \cite[Conjecture~1.10]{MNWW-2011}.
\end{enumerate}
\end{Proposition}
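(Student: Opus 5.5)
The idea is to pass to a set of matroids of density one and show that, for $n$ large, every matroid in it lies in $\mathcal{ASM}$. Write $\mathcal{SP}$ for the property of being sparse paving, and $\mathcal{R}$ for the property that the rank $k$ satisfies $(n-1)/2\le k\le (n+1)/2$. By conjectures (1) and (2), $|\mathcal{SP}_u(n)|/u(n)\to 1$ and $|\mathcal{R}_u(n)|/u(n)\to 1$. A union bound on the complements then gives $|(\mathcal{SP}\cap\mathcal{R})_u(n)|/u(n)\to 1$, since $1-|(\mathcal{SP}\cap\mathcal{R})_u(n)|/u(n)$ is at most the sum of the two complementary proportions, both of which tend to $0$.

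Next I would show that a sizable part of $\mathcal{SP}\cap\mathcal{R}$ lies in $\mathcal{ASM}$. Let $M$ be sparse paving of rank $k$ on $n$ elements with $(n-1)/2\le k\le(n+1)/2$.
\begin{itemize}
\item If $M$ is uniform, \Cref{prop:uniform-matroid} gives $M\in\mathcal{ASM}$.
\item If $M$ is non-uniform and $n\ge 2k$, i.e.\ $k\le n/2$, then $M^*$ is paving, and \Cref{thm:asym-res-dual-paving} gives $M\in\mathcal{ASM}$. This covers the cases $k=(n-1)/2$ and $k=n/2$.
\item The only remaining case is $k=(n+1)/2$ with $n$ odd, i.e.\ $n=2k-1$. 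This is exactly the borderline where \Cref{app:sharp} shows that $\mathcal{ASM}$ can fail, so it cannot be handled by the earlier results.
\end{itemize}

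To deal with $n=2k-1$, use the hypothesis that $L=\lim\frac{|\mathcal{ASM}_u(n)|}{u(n)}$ exists, so it suffices to compute the limit along a subsequence. Along even $n$, the range $(n-1)/2\le k\le(n+1)/2$ forces $k=n/2$, so the bad case $n=2k-1$ never occurs. Hence for even $n$ we have $\mathcal{SP}\cap\mathcal{R}\subseteq\mathcal{ASM}$, and therefore
\[
1\ge \frac{|\mathcal{ASM}_u(n)|}{u(n)}\ge\frac{|(\mathcal{SP}\cap\mathcal{R})_u(n)|}{u(n)}\to 1
\]
along even $n$. This gives $L=1$, which is \Cref{conj:asymptoticresurgencemostmatroids}.

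The crux, and the reason the existence hypothesis is needed, is the odd case $n=2k-1$. The subsequence trick avoids it entirely, so the remaining checks are routine. First, the case $k=n/2$ satisfies $n\ge 2k$ with equality, so \Cref{thm:asym-res-dual-paving} applies. Second, the properties $\mathcal{SP}$, $\mathcal{R}$ and $\mathcal{ASM}$ are all closed under isomorphism, so the counts of unlabeled matroids make sense.
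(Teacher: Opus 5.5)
Your proposal is correct and follows essentially the same route as the paper: intersect the two density-one properties, restrict to even $n=2k$ (where the rank $k=n/2$ is forced), apply \Cref{thm:asym-res-dual-paving}, and use the assumed existence of the limit to pass from the even subsequence to the full limit. Your explicit treatment of the uniform case via \Cref{prop:uniform-matroid} is a small improvement, since the paper's proof invokes \Cref{thm:asym-res-dual-paving} without noting that it is stated only for non-uniform matroids.
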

\begin{proof}
A straightforward computation with limits yields that if $\mathcal{P}$ and $\mathcal{Q}$ are two matroid properties so that asymptotically almost every matroid is $\mathcal{P}$ and asymptotically almost every matroid is $\mathcal{Q}$, then asymptotically almost every matroid is both $\mathcal{P}$ and $\mathcal{Q}$.  That is to say, if the conjectures (1) and (2) are true, then asymptotically almost every matroid on $n$ elements is sparse paving and has rank between $(n-1)/2$ and $(n+1)/2$.  In particular, if conjectures (1) and (2) are true then asymptotically almost every matroid on $2k$ elements is sparse paving of rank $k$. By~\Cref{thm:asym-res-dual-paving}, asymptotically almost every matroid on $2k$ elements has a facet ideal which achieves equality in \GHV{}.  Thus, assuming conjectures (1) and (2),
\[
\lim_{k\to\infty}\frac{|\mathcal{ASM}_u(2k)|}{u(2k)}=1.
\]
Hence, if $\lim_{n\to\infty}\frac{|\mathcal{ASM}_u(n)|}{u(n)}$ exists, then \Cref{conj:asymptoticresurgencemostmatroids} follows.
\end{proof}

\section{Designs and perfect matroid designs}\label{sec:DesignsAndPMDs}
In this section and \Cref{sec:asym-Steiner-system}, we show that two classes of matroids that have quite a bit of symmetry do satisfy $\widehat{\rho}(I(M))=\frac{\alpha(I(M))}{\widehat{\alpha}(I(M))}$.  The symmetry is encoded by \textit{designs}.  We first show that if the generators of minimal degree, say $k$, in a squarefree monomial ideal with $n$ variables form a design, then the Waldschmidt constant of the Alexander dual is $n/k$.  We use this to deduce the asymptotic resurgence of facet ideals of \textit{perfect matroid designs} - this class of matroids includes finite projective and affine geometries.

Our reference for designs is~\cite[Chapter~12]{Wel}.  Let $t,n,k,\lambda$ be integers with $1\le t\le k \le n$.  A $t$-$(n,k,\lambda)$ \textit{design} is a pair $\mathfrak{D}=(\mathfrak{B},E),$ where $E$ is a set of $n$ elements and $\mathfrak{B}$ is a collection of subsets of $E$ (called \textit{blocks}) of cardinality $k$ so that any subset of $E$ with cardinality $t$ is contained in exactly $\lambda$ blocks.

We let $\beta(\mathfrak{D})=|\mathfrak{B}|$ be the number of blocks.  For any subset $U\subset E$ with cardinality $0\le |U|=s\le t$, one can compute that there are $\lambda\dfrac{\binom{n-s}{t-s}}{\binom{k-s}{t-s}}$ many blocks containing $U$.  We write $\lambda_s(\mathfrak{D})$ (or simply $\lambda_s$ if $\mathfrak{D}$ is understood) for $\lambda\dfrac{\binom{n-s}{t-s}}{\binom{k-s}{t-s}}$.  Notice that $\lambda=\lambda_t$ and if $U=\emptyset$ we see that $\beta(\mathfrak{D})=\lambda_0(\mathfrak{D})=\lambda \frac{{n \choose t}}{{k \choose{t}}}$.  It follows that a $t$-$(n,k,\lambda)$ design $\mathfrak{D}$ is also an $s$-$(n,k,\lambda_s)$ design for $1\le s\le t$. A {\it balanced incomplete block design} (in short BIBD) $\mathfrak{D}$ is a $2$-$(n,k,\lambda)$ design with $b(\mathfrak{D})<\binom{n}{k}$.   

Let $\mathfrak{D}$ be a $t$-$(n,k,\lambda)$ design on the ground set $E=[n]$.  We associate to $\mathfrak{D}$ the squarefree monomial ideal $I(\mathfrak{D}):=\langle x_B~:~B \in \mathfrak{D}\rangle \subseteq \KK[x_1,\ldots,x_n]$, which we call the {\it block design ideal} of $\mathfrak{D}$.

We show that if the Alexander dual of a squarefree monomial ideal contains the block design ideal of a $t$-$(n,k,\lambda)$ design among its minimal generators, then the Waldschmidt constant is determined by the parameters $n$ and $k$ of the design.

\begin{Lemma}\label{lem:bd-growth-rate}
Suppose $I\subset \KK[x_1,\ldots,x_n]$ is a squarefree monomial ideal and $\mathfrak{D}$ is a $t$-$(n,k,\lambda)$ design on the ground set $E$ of size $n$ with $t\ge 1$.  If $I(\mathfrak{D})\subseteq \dual{I}$ and $\alpha(\dual{I})=k$, then $\widehat{\alpha}(I)=\frac{n}{k}$.
\end{Lemma}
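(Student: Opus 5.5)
We will compute $\widehat{\alpha}(I)$ through \Cref{thm:SPWaldschmidt}, proving the upper bound with an explicit point of $\SP(I)$ and the lower bound by averaging the defining inequalities over the blocks of $\mathfrak{D}$.

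Recall that the minimal primes of $I$ correspond to the minimal generators of $\dual{I}$. Hence, by \Cref{def:SymbolicPolyhedron}, the defining inequalities of $\SP(I)$ inside $\RR^n_{\ge 0}$ are $\langle \chi_U,{\bf a}\rangle\ge 1$, one for each $U$ with $x^U\in\mathcal{G}(\dual{I})$.

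For the upper bound, take ${\bf v}=\frac{1}{k}\sum_{i=1}^n{\bf e}_i$. The hypothesis $\alpha(\dual{I})=k$ says that every minimal generator $x^U$ of $\dual{I}$ has $|U|\ge k$. Therefore $\langle\chi_U,{\bf v}\rangle=|U|/k\ge 1$, so ${\bf v}\in\SP(I)$. By \Cref{thm:SPWaldschmidt}, $\widehat{\alpha}(I)\le n/k$.

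For the lower bound, let ${\bf a}\in\SP(I)$. Each block $B\in\mathfrak{B}$ gives $x^B\in I(\mathfrak{D})\subseteq\dual{I}$ with $\deg x^B=k=\alpha(\dual{I})$. Since $x^B$ has minimal degree and lies in $\dual{I}$, it must be a minimal generator. Hence $P_B$ is a minimal prime of $I$, and $\sum_{i\in B}a_i\ge 1$. Summing over all blocks gives
\[
\sum_{B\in\mathfrak{B}}\sum_{i\in B}a_i=\sum_{i=1}^n |\{B\in\mathfrak{B}: i\in B\}|\,a_i\ge \beta(\mathfrak{D}).
\]
Since $t\ge 1$, the design is also a $1$-$(n,k,\lambda_1)$ design, so every element lies in exactly $\lambda_1$ blocks. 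Counting incidences gives $n\lambda_1=k\beta(\mathfrak{D})$. The displayed inequality therefore becomes $\lambda_1\sum_i a_i\ge \beta(\mathfrak{D})$, that is, $\sum_i a_i\ge \beta(\mathfrak{D})/\lambda_1=n/k$. Applying \Cref{thm:SPWaldschmidt} again gives $\widehat{\alpha}(I)\ge n/k$, which completes the argument.

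The only step needing care is the claim that each block yields a minimal prime of $I$. This follows because a degree-$k$ element of $\dual{I}$ cannot be a proper multiple of another element when $\alpha(\dual{I})=k$. We also need $\lambda_1>0$, which holds because $\mathfrak{B}$ is nonempty.
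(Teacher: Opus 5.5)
Your proposal is correct and follows essentially the same route as the paper: the upper bound comes from the point $\frac{1}{k}\sum_i {\bf e}_i\in\SP(I)$, and the lower bound from summing the block inequalities, giving $\lambda_1\sum_i a_i\ge\beta(\mathfrak{D})$ and hence $\sum_i a_i\ge n/k$. Your explicit justification that each block $x^B$ is a minimal generator of $\dual{I}$ is the detail the paper leaves implicit. The tacit assumption that the design has at least one block (so $\lambda_1>0$) is also present in the paper's argument.
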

\begin{proof}
It follows from \Cref{def:SymbolicPolyhedron} and the definition of the Alexander dual that the inequality $\langle \chi_B,{\bf a}\rangle=\sum\limits_{i\in B} a_i\ge 1$ is satisfied for all ${\bf a}=(a_1,\ldots,a_n)\in \SP(I)$ and $B\in\mathfrak{B}$.  Adding these inequalities together for all blocks $B\in\mathfrak{B}$ yields
\[
\lambda_1(\mathfrak{D})\sum_{i=1}^n a_i \ge \beta(\mathfrak{D})=\lambda_0(\mathfrak{D})
\]
for all ${\bf a}\in\SP(I)$.  It follows  from \Cref{thm:SPWaldschmidt} that $\widehat{\alpha}(I)\ge  \frac{\lambda_0(\mathfrak{D})}{\lambda_1(\mathfrak{D})}=\frac{n}{k}$.  For the opposite inequality, we claim that
\[
{\bf c}=\sum_{i=1}^n \frac{1}{k}{\bf e}_i\in\SP(I).
\]
Let $P_U\in \mbox{Min}(I)$.  Then, $x^U\in \dual{I}$.   Since $\alpha(\dual{I})=\alpha(I(\mathfrak{D}))=k$, $|U|\ge k$.  So $\langle \chi_U,{\bf c}\rangle=\sum\limits_{i\in U}\frac{1}{k}\ge 1$ for all $P_U \in  \mbox{Min}(I)$.  Therefore,  by \Cref{def:SymbolicPolyhedron},  ${\bf c}\in \SP(I)$.  It follows from \Cref{thm:SPWaldschmidt} that $\widehat{\alpha}(I)\le \sum\limits_{i=1}^n \frac{1}{k}=\frac{n}{k}$, completing the proof.
\end{proof}

We use the machinery of $t$-designs to compute the Waldschmidt constant of the facet ideal of a perfect matroid design. A \textit{perfect matroid design} is a matroid $M$ in which each flat of rank $r$ has the same cardinality for all $1 \le r \le \rk(M)$; we denote the cardinality of a rank $r$ flat of a perfect matroid design by $f_r(M)$.  Standard examples of perfect matroid designs are uniform matroids, \textit{projective geometries}, \textit{affine geometries}, and matroid designs of \textit{Steiner systems}. Also, truncations and contractions of a perfect matroid design are again perfect matroid designs. Our reference for perfect matroid designs is 
\cite[Chapter 12.5]{Wel}.

\begin{Proposition}\label{prop:wald-PMD}
Let $M$ be a perfect matroid design of rank $k$ on the ground set $E$ of size $n$. Then, $$\widehat{\alpha}(I(M))= \frac{n-\ell(M)}{\alpha(I_{\Delta(M^*)})},$$ where $\ell(M)$ denotes the number of loops in $M$.
\end{Proposition}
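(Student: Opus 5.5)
Since the loops of $M$ appear in no basis, $I(M)$ is generated by the same monomials as $I(M\setminus \loops(M))$, and, as in the proof of \Cref{cor:rhosinglecontraction}, $\widehat{\alpha}(I(M))=\widehat{\alpha}(I(M\setminus\loops(M)))$. The minimal generators of $I_{\Delta(M^*)}$ are the circuits of $M^*$. In $M^*$, the loops of $M$ are coloops, so they lie in no circuit of $M^*$. Removing the loops of $M$ therefore leaves $\alpha(I_{\Delta(M^*)})$ unchanged. Hence I will first reduce to the loopless case with ground set of size $n'=n-\ell(M)$. I also need that a perfect matroid design minus its loops is again a perfect matroid design: the flats of $M\setminus\loops(M)$ are $F\setminus \loops(M)$ for flats $F$ of $M$, so all flats of a given rank still have a common size.

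In the loopless case, \Cref{prop:Alex duality facet} gives $\dual{I(M)}=I_{\Delta(M^*)}$, whose minimal generators are $x^C$ for $C\in\C(M^*)$. The complements of circuits of $M^*$ are exactly the hyperplanes of $M$. All hyperplanes of $M$ have the same size $f_{k-1}(M)$, so all circuits of $M^*$ have the common size $c=n'-f_{k-1}(M)$. Thus $\alpha(\dual{I(M)})=c$.

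To apply \Cref{lem:bd-growth-rate}, I need a $1$-design among these generators. Take the blocks $\mathfrak{B}=\C(M^*)$ of size $c$; I must show that every element $e$ lies in the same number of blocks. Equivalently, the number of hyperplanes of $M$ \emph{not} containing $e$ is independent of $e$. Since $M$ is loopless, $\mathrm{cl}\{e\}$ is a rank-one flat, and the hyperplanes containing $e$ are exactly those containing $\mathrm{cl}\{e\}$. These correspond to the hyperplanes of the contraction $M/\mathrm{cl}\{e\}$.

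The key step, and the one I expect to be the main obstacle, is showing that the number of hyperplanes of $M/\mathrm{cl}\{e\}$ does not depend on $e$. For this I would use the standard fact that in a perfect matroid design the number of flats of rank $r$ containing a given flat of rank $s$ depends only on $r,s$. This comes from a double-counting argument: each rank $r$ flat containing a rank $s$ flat $F$ is determined by its elements outside $F$, and these sets partition $E\setminus F$ via the sizes $f_{r}-f_{s}$ at the step $r=s+1$. I would then iterate over $r$, or cite \cite[Chapter 12.5]{Wel}. If citing is unavailable, the direct argument is as follows. The rank-$(s+1)$ flats containing $F$ partition $E\setminus F$ into blocks of size $f_{s+1}-f_s$, so their number is $(n'-f_s)/(f_{s+1}-f_s)$, which is independent of $F$. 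Counting pairs (rank-$(s+1)$ flat $G\supseteq F$, hyperplane $H\supseteq G$) then gives the number of hyperplanes over $F$ recursively in terms of constants depending only on $s$.

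With this, $\mathfrak{B}$ is a $1$-$(n',c,\lambda)$ design with $I(\mathfrak{D})\subseteq\dual{I(M)}$ and $\alpha(\dual{I(M)})=c$. \Cref{lem:bd-growth-rate} then yields $\widehat{\alpha}(I(M))=n'/c=(n-\ell(M))/\alpha(I_{\Delta(M^*)})$. I should also dispose of the degenerate case $k=0$, or more generally the case with no hyperplanes, which is excluded since $\rk(M)<|E|$ and $k\ge1$ are implicit.
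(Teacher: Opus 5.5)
Your proposal is correct and follows essentially the same route as the paper. Both arguments identify $\C(M^*)$ with the complements of the hyperplanes of $M$ and show that these form a $1$-design on the non-loops, using that the number of hyperplanes through a rank-one flat is constant; the paper takes this from \cite[Theorems~12.5.1 and 12.5.3]{Wel}, while you derive it by double counting, and both then apply \Cref{lem:bd-growth-rate}. Your explicit reduction to the loopless case is a welcome addition, since \Cref{lem:bd-growth-rate} is stated for a design on all $n$ variables and the paper applies it without comment to a design on $n-\ell(M)$ elements.
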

\begin{proof}
As $M$ is a perfect matroid design of rank $k$, for each $0 \le i \le k$, the flats of rank $i$ have the same size, namely $f_i(M)$.  For any $i\le j\le m \in \{0,1,\ldots,k\}$ and given a rank $i$-flat $F$ and a rank $m$-flat $G$ with $F \subset G$, we define $t_M(i,j,\ell)$ as the number of rank $j$-flats which contain $F$ and are contained within $G$. It follows from \cite[Theorem 12.5.1]{Wel} that $t_M(i,j,\ell)$ is independent of the choice of $F$ and $G$, and  hence, well defined  and satisfies  
\[
t_M(i,i+1,j) =\dfrac{t_M(0,1,j)-t_M(0,1,i)}{t_M(0,1,i+1)-t_M(0,1,i)}, \text{ for } 0 \le i \le j \le k.
\]
By \cite[Theorem 12.5.3]{Wel}, the rank $(k-1)$-flats of $M$ are blocks of a balanced incomplete block design on the set $\{F ~:~ F \in \cL_1(M)\}$.
Note that  $|\{F ~:~ F \in \cL_1(M)\}|=t_M(0,1,k)$, and the number of rank $1$-flats in a rank $(k-1)$-flat of $M$ is $t_M(0,1,k-1)$. 
This implies that $(\cL_{k-1}(M), \cL_1(M)) $  is a 
$2$-$(t_M(0,1,k),t_M(0,1,k-1),\lambda)$ design for some $\lambda$. So, $(\cL_{k-1}(M), \cL_1(M)) $  is a 
$1$-$(t_M(0,1,k),t_M(0,1,k-1),\lambda_1)$ design.
We claim that $\mathcal{C}(M^*)=\{E \setminus G~:~ G \in \cL_{k-1}(M)\}$ is the collection of blocks of a $1$-$(n-\ell(M),n-f_{k-1}(M),\lambda' )$ design for some $\lambda'$. Note that $E\setminus G$ has size $n-f_{k-1}(M)$ for each $G \in \cL_{k-1}(M)$. Also, observe that $E \setminus G \subseteq E \setminus \loops(M)$ for every $G \in \cL_{k-1}(M)$ since the set $\loops(M)$ is contained in every flat. 
Now, let $e \in E \setminus \loops(M)$. Then, $e \in F$ for unique $F \in \cL_1(M)$. Now, $F$ is contained in exactly $\lambda_1$ $(k-1)$-flats as $(\cL_{k-1}(M), \cL_1(M)) $  is a 
$1$-$(t_M(0,1,k),t_M(0,1,k-1),\lambda_1)$ design.
Therefore, $e$ is in $\lambda_1$ $(k-1)$-flats of $M$, and hence, $e$ is in the complement of $t_M(0,k-1,k)-\lambda_1$ $(k-1)$-flats of $M$. This proves that $\mathcal{C}(M^*)=\{E \setminus G~:~ G \in \cL_{k-1}(M)\}$ is the collection of blocks of a $1$-$(n-\ell(M),n-f_{k-1}(M),\lambda' )$ design with $\lambda'= t_M(0,k-1,k)-\lambda_1$. Note that  $\alpha(I_{\Delta(M^*)}) =n-f_{k-1}(M)$. Thus, by \Cref{lem:bd-growth-rate}, $\widehat{\alpha}(I(M)) = \frac{n-\ell(M)}{\alpha(I_{\Delta(M^*)})}$. 
\end{proof}

We now provide the asymptotic resurgence of facet ideals of perfect matroid designs and Stanley-Reisner ideals of their dual. 
\begin{Theorem}\label{thm:asym-res-PMD}
Let $M$ be a perfect matroid design of rank $k$ on the ground set $E$ of size $n$. Then, \[\widehat{\rho}(I(M))=\frac{\alpha(I(M))}{\widehat{\alpha}(I(M))}=\frac{k\alpha(I_{\Delta(M^*)})}{n-\ell(M)}=\frac{\alpha(I_{\Delta(M^*)})}{\widehat{\alpha}(I_{\Delta(M^*)})}=\widehat{\rho}(I_{\Delta(M^*)}).\] 
\end{Theorem}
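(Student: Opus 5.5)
The plan is to use \Cref{thm:asym-res-facet} together with the closure of perfect matroid designs under contraction, and then to compare each term of the maximum with the term coming from $F=\loops(M)$. Write $c=\alpha(I_{\Delta(M^*)})=n-f_{k-1}(M)$ for the common size of the circuits of $M^*$, i.e.\ of complements of hyperplanes. By \Cref{prop:wald-PMD}, $\widehat{\alpha}(I(M))=(n-\ell(M))/c$, so $\frac{k}{\widehat{\alpha}(I(M))}=\frac{kc}{n-\ell(M)}$. Moreover $\alpha(I(M))=k$, so the GHV lower bound \eqref{eq:rhohatineq} gives $\widehat{\rho}(I(M))\ge \frac{kc}{n-\ell(M)}$. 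For the reverse inequality we must show that for every flat $F\neq E$ of rank $r<k$,
\[
\frac{k-r}{\widehat{\alpha}(I(M/F))}\le \frac{kc}{n-f_0},
\]
where $f_0=\ell(M)$.

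For the contraction term, $M/F$ is again a perfect matroid design, of rank $k-r$ on $n-f_r$ elements, with no loops when $F$ is a flat. Its hyperplanes are $G\setminus F$ for $G\in\cL_{k-1}(M)$ containing $F$, so its dual-circuit size is again $n-f_{k-1}=c$. By \Cref{prop:wald-PMD}, $\widehat{\alpha}(I(M/F))=(n-f_r)/c$, and the term equals $\frac{(k-r)c}{n-f_r}$. The claim therefore reduces to the purely numerical inequality
\[
\frac{k-r}{n-f_r}\le\frac{k}{n-f_0}\quad\text{for } 0\le r<k,
\]
equivalently $\frac{n-f_r}{k-r}\ge \frac{n-f_0}{k}$. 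In words, the average number of elements added per rank step from rank $r$ up to rank $k$ is at least the overall average.

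The main obstacle is this inequality, which says the flat sizes $f_0<f_1<\dots<f_k=n$ are ``convex'' in rank, i.e.\ the increments $f_{i+1}-f_i$ are nondecreasing. I would prove monotonicity of the increments directly. Take flats $F_{i-1}\subset F_i\subset F_{i+1}$ of ranks $i-1,i,i+1$. Pick $e\in F_{i+1}\setminus F_i$. For each $x\in F_i\setminus F_{i-1}$, the flat $\mathrm{cl}(F_{i-1}\cup\{e\})$ has rank $i$ and meets $F_i$ exactly in $F_{i-1}$ (otherwise it would equal $F_i$). Hence $\mathrm{cl}(F_{i-1}\cup\{e\})\setminus F_{i-1}$ lies in $F_{i+1}\setminus F_i$ and has size $f_i-f_{i-1}$. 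This gives $f_{i+1}-f_i\ge f_i-f_{i-1}$. Convexity of the sequence $(f_i)$ then yields the average-slope comparison: the mean of the last $k-r$ increments is at least the mean of all $k$ increments. Alternatively, one could use the counting formula for $t_M$ from \cite[Theorem~12.5.1]{Wel}.

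With this, all terms in \Cref{thm:asym-res-facet} are bounded by the $F=\loops(M)$ term, which gives $\widehat{\rho}(I(M))=\frac{k}{\widehat{\alpha}(I(M))}=\frac{kc}{n-\ell(M)}$. For the Stanley--Reisner side, \Cref{prop:Alex duality facet} gives $\widehat{\rho}(I_{\Delta(M^*)})=\widehat{\rho}(I(M))$. It remains to identify this with $\alpha(I_{\Delta(M^*)})/\widehat{\alpha}(I_{\Delta(M^*)})$, i.e.\ to show $\widehat{\alpha}(I_{\Delta(M^*)})=(n-\ell)/k$. The minimal primes of $I_{\Delta(M^*)}$ are $P_B$ for $B\in\cB(M)$. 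The lower bound on $\widehat{\alpha}$ comes from averaging the inequalities over all bases, using that every non-loop lies in the same number of bases by symmetry of the design (alternatively, a weighting through flags of flats). The upper bound comes from the point $\frac1k\sum_{i\notin\loops(M)}\mathbf e_i$, which lies in the symbolic polyhedron since each basis has $k$ non-loop elements. This gives the middle equalities.
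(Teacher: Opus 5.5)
Your proof is correct, but it is organized differently from the paper's.

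The paper inducts on rank through \Cref{prop:asym-single-contraction-matroid}, so it only compares $\frac{k}{\widehat{\alpha}(I(M))}$ with $\widehat{\rho}(I(M/F))$ for $F\in\cL_1(M)$. The inequality it needs is the case $r=1$ of yours, $k\bigl(f_1-\ell(M)\bigr)\le n-\ell(M)$. It obtains this by observing that the closures of the $k$ elements of a basis are rank-one flats that are pairwise disjoint outside $\loops(M)$.

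You instead apply \Cref{thm:asym-res-facet} to all proper flats at once. You evaluate each term exactly as $\frac{(k-r)c}{n-f_r}$, since $M/F$ is a loopless perfect matroid design with the same dual circuit size $c$. You then control all terms with the convexity $f_{i+1}-f_i\ge f_i-f_{i-1}$, which your flag argument establishes correctly. This gives a non-inductive proof and a cleaner structural reason why the maximum sits at $F=\loops(M)$. The paper's packing inequality is weaker than convexity. However, applied to $M/F_j$ along a flag of flats it also produces your average-slope comparison for every $r$, and this is effectively what its induction does.

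On the Stanley--Reisner side, the paper just cites \cite[Remark~2.6, Theorem~6.8]{DFKT24}. You compute $\widehat{\alpha}(I_{\Delta(M^*)})=(n-\ell(M))/k$ directly, which is a nice self-contained addition. However, ``by symmetry of the design'' is not a valid reason, since a perfect matroid design need not have a transitive automorphism group. Replace it by a count:
\begin{itemize}
\item A fixed non-loop $e$ is the first entry of exactly $\prod_{i=1}^{k-1}(n-f_i)$ ordered bases, because the $(i+1)$-st element must avoid the rank-$i$ flat spanned by the previous ones, which has $f_i$ elements.
\item Hence $e$ lies in $\frac{1}{(k-1)!}\prod_{i=1}^{k-1}(n-f_i)$ bases, out of $\frac{1}{k!}\prod_{i=0}^{k-1}(n-f_i)$ bases in total.
\item Averaging $\sum_{i\in B}a_i\ge 1$ over all bases then gives exactly $\sum_i a_i\ge (n-\ell(M))/k$.
\end{itemize}
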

\begin{proof}
We first prove  $\widehat{\rho}(I(M))=\frac{\alpha(I(M))}{\widehat{\alpha}(I(M))}=\frac{k\alpha(I_{\Delta(M^*)})}{n-\ell(M)}$ by induction on $k$. If $k=2$, then by \Cref{lem:rank-two}, $\widehat{\rho}(I(M))=\frac{2}{\widehat{\alpha}(I(M))}=\frac{2\alpha(I_{\Delta(M^*)})}{n-\ell(M)}$, where the last equality follows from \Cref{prop:wald-PMD}.   Now, we assume that $\rk(M)>2$ and the result is true for any perfect matroid design $N$ with $\rk(N)< \rk(M)$.  It follows from \Cref{prop:asym-single-contraction-matroid} and \Cref{prop:wald-PMD} that  $\widehat{\rho}(I(M))=\max\left\lbrace \frac{k}{\widehat{\alpha}(I(M))},\widehat{\rho}_{c,1}(I(M))\right\rbrace= \max\left\lbrace \frac{k\alpha(I_{\Delta(M^*)})}{n-\ell(M)},\widehat{\rho}_{c,1}(I(M))\right\rbrace.
$ We claim that $\widehat{\rho}_{c,1}(I(M)) \le \frac{k\alpha(I_{\Delta(M^*)})}{n-\ell(M)}$. By \Cref{prop:asym-single-contraction-matroid}, $\widehat{\rho}_{c,1}(I(M))=\max\{\widehat{\rho}(I(M/F))~:~F\in \cL_1(M)\}$.  Let $F\in \cL_1(M)$. Then, $M/F$ is a perfect matroid design of rank $k-1$ on the ground set $E \setminus F$, which has size $n-|F|$. Therefore, by induction, $\widehat{\rho}(I(M/F))=\frac{\alpha(I(M/F))}{\widehat{\alpha}(I(M/F))}=\frac{(k-1)\alpha(I_{\Delta((M/F)^*)})}{n-|F|-\ell(M/F)}$, where $\ell(M/F)$ is the number of loops in $M/F$. Since $F$ is a flat, $\ell(M/F)=0$.   Observe that a $j$-flat of $M/F$ is obtained from the $(j+1)$-flat of $M$ that contains $F$ by removing $F$ from it.  Therefore,  $\alpha(I_{\Delta((M/F)^*)})=n-|F|-f_{k-2}(M/F)=n-|F|-(f_{k-1}(M) -|F|)=n-f_{k-1}(M)=\alpha(I_{\Delta(M^*)})$.   So, in order to prove that $\widehat{\rho}_{c,1}(I(M)) \le \frac{k\alpha(I_{\Delta(M^*)})}{n-\ell(M)}$, it is sufficient to prove that  $\frac{k-1}{n-|F|}\le \frac{k}{n-\ell(M)}$ for every $F \in \cL_1(M)$.   Let $F\in \cL_1(M)$.  It suffices to prove that $k|F| +\ell(M) \le n+k\ell(M)$.  Since $\rk(M)=k$,  for any basis $B=\{e_1,\ldots,e_k\}$, $E=\text{cl}(B)$. Notice that  $\text{cl}(\{e_1\}),\ldots, \text{cl}(\{e_k\})$ are distinct rank one flats of $M$, all of which have the same cardinality, namely $|F|$, since $M$ is a perfect matroid design.  Also, $\bigcup\limits_{i=1}^k \text{cl}(\{e_i\}) \subseteq E$.  Therefore, $|E\setminus \loops(M) | \ge |\text{cl}(\{e_1\}) \setminus \loops(M)| +\cdots + |\text{cl}(\{e_k\}) \setminus \loops(M)|$.   This implies that $n-\ell(M) \ge k(|F| -\ell(M))$ which completes the proof of the claim. Thus,  $\widehat{\rho}(I(M))=\frac{\alpha(I(M))}{\widehat{\alpha}(I(M))}=\frac{k\alpha(I_{\Delta(M^*)})}{n-\ell(M)}$.  The penultimate equality follows from \cite[Remark 2.6, Theorem 6.8]{DFKT24}, and the final equality  follows from \Cref{prop:Alex duality facet}. 
\end{proof}

We conclude this section by computing the asymptotic resurgence of facet ideals of the four aforementioned families of perfect matroid designs: uniform matroids, projective geometries, affine geometries, and perfect matroid designs arising from Steiner systems.  
\begin{Example}\label{ex:uniformMatroids}
Let ${\mathrm U}_{k,n}$ be the uniform matroid of rank $k$ on the ground set $E$ of size $n$. For all $i \in [k-1]$, the flats of rank $i$ are all $i$-subsets of $E$ and the flat of rank $k$ is $E$. Therefore, ${\mathrm U}_{k,n}$ is a perfect matroid design. Observe that $\mathrm{U}_{k,n}^*=\mathrm{U}_{n-k,n}$. So, $\alpha(I_{\Delta({\mathrm{U}_{n-k,n}})})=n-k+1$. By \Cref{thm:asym-res-PMD},  \[\widehat{\rho}(I(\mathrm{U}_{k,n}))=\frac{\alpha(I(\mathrm{U}_{k,n}))}{\widehat{\alpha}(I(\mathrm{U}_{k,n}))}=\frac{k(n-k+1)}{n}=\frac{\alpha(I_{\Delta({\mathrm{U}_{n-k,n}})})}{\widehat{\alpha}(I_{\Delta({\mathrm{U}_{n-k,n}})})}=\widehat{\rho}(I_{\Delta({\mathrm{U}_{n-k,n}})}).\] 
\end{Example}

\begin{Example}[Projective geometries]\label{ex:projectiveGeometries}
Let $\KK=\mathbb{F}_q$ be a finite field of order $q$, where $q$ is a power of a prime.  Given an integer $m\ge 0$, there is a matroid of rank $m+1$, derived from the projective space $\mathbb{P}^{m}_{\KK}$, called the \textit{projective geometry} $PG(m,\KK)$.  See~\cite[Chapter~6.1]{Oxley2011} for a detailed discussion.  The ground set of the projective geometry $PG(m,\KK)$ is the set of all $(q^{m+1}-1)/(q-1)$ points of $\mathbb{P}^m_{\KK}$.  For all $i\in[m+1]$ a flat of rank $i$ in $PG(m,\KK)$ corresponds to a  linear subspace of $\KK^{m+1}$ of dimension $i$, and thus is isomorphic to the projective geometry $PG(i-1,\KK)$.  It follows that a flat of rank $i$ in $PG(m,\KK)$ has size $(q^{i}-1)/(q-1)$, for all $i\in[m+1]$.  Thus, $PG(m,\KK)$ is a perfect matroid design.  Note that $\alpha(I_{\Delta(PG(m,\KK)^*)})=\frac{q^{m+1}-1}{q-1}-\frac{q^m-1}{q-1}=q^m$.  Hence, by \Cref{thm:asym-res-PMD}, \[\widehat{\rho}(I(PG(m,\KK)))=\frac{\alpha(I(PG(m,\KK)))}{\widehat{\alpha}(I(PG(m,\KK)))}=\frac{(m+1)(q-1)q^m}{q^{m+1}-1}=\frac{\alpha(I_{\Delta({PG(m,\KK)}^*)})}{\widehat{\alpha}(I_{\Delta({PG(m,\KK)}^*)})}=\widehat{\rho}(I_{\Delta({PG(m,\KK)}^*)}).\] \end{Example}

\begin{Example}[Affine geometries]\label{ex:affineGeometries}
Let $\KK=\mathbb{F}_q$ be a finite field of order $q$, where $q$ is a power of a prime.  Given a natural number $m\ge 1$, the \textit{affine geometry} $AG(m,\KK)$ is a rank $m+1$ matroid obtained from the projective geometry $PG(m,\KK)$ by removing a hyperplane of $PG(m,\KK)$.  See~\cite[Chapter~6.2]{Oxley2011} for a detailed discussion.  Explicitly, the ground set of $AG(m,\KK)$ consists of all $q^m$ points in $\KK^m$.  The flats of rank $i$ ($1\le i\le m+1$) of $AG(m,\KK)$ correspond exactly to affine linear subspaces of $\KK^m$ of dimension $i-1$.  Thus, a flat of rank $i$ in $AG(m,\KK)$ is isomorphic to $AG(i-1,\KK)$ and hence has $q^{i-1}$ elements for $1\le i\le m+1$.  Evidently, $AG(m,\KK)$ is a perfect matroid design. Since flats of rank $m$ have size $q^{m-1}$, $\alpha(I_{\Delta(AG(m,\KK)^*)}) = q^m-q^{m-1}$. Therefore, $\frac{(m+1)\alpha(I_{\Delta(AG(m,\KK)^*)})}{q^m}= \frac{(m+1)(q^m-q^{m-1})}{q^{m}}=\frac{(m+1)(q-1)}{q}$.  Hence, by \Cref{thm:asym-res-PMD}, \[\widehat{\rho}(I(AG(m,\KK)))=\frac{\alpha(I(AG(m,\KK)))}{\widehat{\alpha}(I(AG(m,\KK)))}=\frac{(m+1)(q-1)}{q}=\frac{\alpha(I_{\Delta({AG(m,\KK)}^*)})}{\widehat{\alpha}(I_{\Delta({AG(m,\KK)}^*)})}=\widehat{\rho}(I_{\Delta({AG(m,\KK)}^*)}).\] \end{Example}

\begin{Example}[Matroid designs of Steiner systems]\label{ex:SteinerSystemPMD}
Let $\mathcal{S}=(E,\mathfrak{B})$ be a Steiner system of type $S(t,k,n)$ with $1 \le t<k<n$, see \Cref{sec:asym-Steiner-system} for definition and more details. Let  $\cB$ be those $t+1$ subsets of $E$ that are not contained in any block of $\mathcal{S}$. Then $\cB$ satisfies the basis exchange property and, hence, gives rise to a matroid $M$ of rank $t+1$ on the ground set $E$ of size $n$. The matroid arising this way is known as the {\it matroid design of the Steiner system} $\mathcal{S}$, and it is a perfect matroid design (also a paving matroid) as its hyperplanes are the blocks of the Steiner system $\mathcal{S}$, and for each $0 \le i \le t-1$, the rank $i$ flats are all size $i$ subsets of $E$.  See \cite[Chapter 12]{Wel} for a detailed discussion. Let $M$ be a matroid design of a Steiner system $\mathcal{S}$ of type $S(t,k,n)$.  By \Cref{thm:asym-res-PMD}, \[\widehat{\rho}(I(M))=\frac{\alpha(I(M))}{\widehat{\alpha}(I(M))}=\frac{(t+1)(n-k)}{n}=\frac{\alpha(I_{\Delta(M^*)})}{\widehat{\alpha}(I_{\Delta(M^*)})}=\widehat{\rho}(I_{\Delta(M^*)}).\]
\end{Example}

\section{Steiner Systems: Waldschmidt Constant and Asymptotic Resurgence Numbers}\label{sec:asym-Steiner-system}
In this section, we compute the Waldschmidt constant and the resurgence number of facet ideals of matroids arising from Steiner systems. 
As a consequence, it follows that facet ideals of matroids arising from Steiner systems achieve equality in \GHV{}\eqref{eq:rhohatineq}. 

A {\it Steiner system of type $S(t,k,n)$} is a $t$-$(n,k,1)$ design.  That is, a Steiner system of type $S(t,k,n)$ is a pair $\mathcal{S}=(E,\mathfrak{B})$ where $E$ is a set of cardinality $n$ and $\mathfrak{B}$ is a collection of subsets of $E$ (called the \textit{blocks} of $\mathcal{S}$).  The blocks of $\mathcal{S}$ all have cardinality $k$ and any $t$-element subset of $E$ is contained in a unique block.

The blocks of a Steiner system of type $S(k,k,n)$ are simply all $k$-element subsets of $E$.  There is only one block of a Steiner system of type $S(t,n,n)$ -- namely the whole ground set $E$.  As we are interested in Steiner systems where the blocks do not consist of all possible $k$-element subsets of $E$, we will assume throughout this section that $1\le t<k<n$.
 
There is a matroid one can naturally associate with a Steiner system (different from the one in \Cref{ex:SteinerSystemPMD} - see \Cref{rmk:TwoSteinerMatroids}), as follows.

\begin{Definition}\label{def:matroidfromSteiner}
If $\mathcal{S}=(E,\mathfrak{B})$ is a Steiner system of type $S(t,k,n)$, then $M(\mathcal{S})=(E,\mathcal{B})$ is a matroid of rank $k$, where $\mathcal{B}$ consists of all subsets of $E$ of size $k$ which are not blocks of $\mathcal{S}$.
\end{Definition}

\begin{Remark}\label{rmk:TwoSteinerMatroids}
If $\mathcal{S}$ is a Steiner system of type $S(t,k,n)$, the matroid $M(\mathcal{S})$ from \Cref{def:matroidfromSteiner} is usually different from the \textit{matroid design} of $\mathcal{S}$ described in \Cref{ex:SteinerSystemPMD}.  In fact, these two matroids match if and only if $k=t+1$. 
\end{Remark}

\begin{Remark}\label{rem:sparsepaving}
If $\mathcal{S}$ is a Steiner system, then $M(\mathcal{S})$ is a sparse paving matroid -- see \cite[Proposition~6.6]{DFKT24} or \cite[Proposition~7.15]{ManNgu}.
\end{Remark}

We first compute the Waldschmidt constant of the facet ideals of $M(\mathcal{S})$ and $M(\mathcal{S})^*$ using the techniques of \Cref{sec:DesignsAndPMDs}.

\begin{Lemma}\label{lem:walds-steiner-systems}
    Let $\mathcal{S}=(E,\mathfrak{B})$ be a Steiner system of type $S(t,k,n)$ and $M(\mathcal{S})$ be the associated matroid in the sense of Definition~\ref{def:matroidfromSteiner}. Then, $$\widehat{\alpha}(I(M(\mathcal{S})))=\frac{n}{n-k} \mbox{  and  } \widehat{\alpha}(I(M(\mathcal{S})^*))=\frac{n}{k}.$$
\end{Lemma}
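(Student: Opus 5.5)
Both formulas will come from \Cref{lem:bd-growth-rate}. Its hypotheses ask for a design among the minimal generators of the Alexander dual, and those generators have minimal degree. Put $M=M(\mathcal{S})$. By \Cref{prop:Alex duality facet}, $\dual{I(M)}=I_{\Delta(M^*)}$ and $\dual{I(M^*)}=I_{\Delta(M)}$. So I must identify the small circuits of $M^*$ and of $M$ and check that they form designs.

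\emph{The facet ideal of $M^*$.} By \Cref{rem:sparsepaving}, $M$ is sparse paving of rank $k$. Its circuit-hyperplanes are the $k$-subsets that are not bases, namely the blocks of $\mathcal{S}$. The circuits of $M$ therefore consist of the blocks (size $k$) together with the $(k+1)$-subsets containing no block. Since $t<k<n$, a Steiner system has at least one block, so $\alpha(I_{\Delta(M)})=k$. The blocks form a $t$-$(n,k,1)$ design with $t\ge 1$, and the block design ideal $I(\mathfrak{D})$ is contained in $I_{\Delta(M)}=\dual{I(M^*)}$. Applying \Cref{lem:bd-growth-rate} to $I=I(M^*)$ gives $\widehat{\alpha}(I(M^*))=n/k$.

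\emph{The facet ideal of $M$.} $M^*$ is sparse paving of rank $n-k$, and its circuit-hyperplanes are the complements $E\setminus B$ of the blocks $B$. Hence $\alpha(I_{\Delta(M^*)})=n-k$, and the minimal-degree generators of $\dual{I(M)}=I_{\Delta(M^*)}$ are the $x^{E\setminus B}$. It remains to check that $\{E\setminus B : B\in\mathfrak{B}\}$ is a $1$-$(n,n-k,\lambda')$ design. An element $e$ lies in exactly $\lambda_1$ blocks, because $\mathcal{S}$ is also a $1$-design. So $e$ lies in exactly $\beta(\mathfrak{D})-\lambda_1$ complements, and this number is independent of $e$. \Cref{lem:bd-growth-rate} then gives $\widehat{\alpha}(I(M))=n/(n-k)$.

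\emph{Possible obstacle.} The only delicate point is that \Cref{lem:bd-growth-rate} needs $t\ge1$ and a design with $\lambda'\ge1$. If $\lambda'=0$, every element would lie in every block, which forces a single block equal to $E$; this contradicts $k<n$. Everything else is routine bookkeeping of the sparse paving circuit structure. Alternatively, for $M$ one can argue directly: the vector $\frac{1}{n-k}\sum_i {\bf e}_i$ lies in $\SP(I(M))$ by \Cref{lem:walds-bound-dual-paving}, and summing the inequalities over the complements of blocks gives the matching lower bound.
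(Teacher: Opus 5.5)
Your proof is correct and follows the paper's argument: both parts are obtained from \Cref{lem:bd-growth-rate}, applied once to the blocks of $\mathcal{S}$ inside $I_{\Delta(M)}=\dual{I(M^*)}$ and once to the complementary $1$-design inside $I_{\Delta(M^*)}=\dual{I(M)}$. You pair each design with the right ideal, whereas the paper's written proof attaches the two conclusions to the opposite paragraphs. Your check that $\lambda'\ge 1$ fills in a detail the paper leaves implicit.
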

\begin{proof}
First, note that $\dual{I(M(\mathcal{S}))}=I_{\Delta(M(\mathcal{S})^*)}$ and  $\dual{I(M(\mathcal{S})^*)}=I_{\Delta(M(\mathcal{S}))}$ by \Cref{prop:Alex duality facet}.  Since $M(\mathcal{S})$ is paving (see \Cref{rem:sparsepaving}), each block of $\mathcal{S}$ is a circuit of $M(\mathcal{S})$ of minimal size, and therefore,  $I(\mathcal{S}) \subseteq I_{\Delta(M(\mathcal{S}))}$ and $\alpha(I_{\Delta(M(\mathcal{S}))})=k$.  By \Cref{lem:bd-growth-rate}, $\widehat{\alpha}(I(M(\mathcal{S})))=\frac{n}{n-k}$.

Since $M(\mathcal{S})^*$ is also paving (see \Cref{rem:sparsepaving}), the circuits of $M(\mathcal{S})^*$ of minimal size are subsets of $[n]$ of size $n-k$ which are complements of blocks of $\mathcal{S}$.  The complements of blocks of $\mathcal{S}$ are a $1$-$(n,n-k,b(\mathcal{S})-\lambda_1)$ design, called the \textit{complementary design} of $\mathcal{S}$ (which we denote $\mathcal{S}^c$).  Therefore, $I(\mathcal{S}^c) \subseteq I_{\Delta(M(\mathcal{S})^*)}$ and $\alpha(I_{\Delta(M(\mathcal{S})^*)})=n-k$.  It follows from \Cref{lem:bd-growth-rate} that $\widehat{\alpha}(I(M(\mathcal{S})^*))=\frac{n}{k}$. 
\end{proof}

\begin{Lemma}\label{lem:steiner-contraction-steiner}
    Let $\mathcal{S}=(E,\mathfrak{B})$ be a Steiner system of type $S(t,k,n)$  with $t\ge 2$ and $M(\mathcal{S})$ be the associated matroid in the sense of Definition~\ref{def:matroidfromSteiner}. Fix $e \in E$. Let $\mathfrak{B}_e=\{B \setminus \{e\}~:~ e \in B \text{ and } B \in \mathfrak{B}\}$.  Then,  $\mathcal{S}_e=(E \setminus \{e\},\mathfrak{B}_e)$ is a Steiner system of type $S(t-1,k-1,n-1)$.  Moreover, $I(M(\mathcal{S})/e)=I(M(\mathcal{S}_e))$. 
\end{Lemma}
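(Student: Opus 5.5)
The proof has two independent parts: a purely design-theoretic check that $\mathcal{S}_e$ is a Steiner system, and a comparison of generating sets of two squarefree monomial ideals.

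\textbf{$\mathcal{S}_e$ is a Steiner system.} Every element of $\mathfrak{B}_e$ is a subset of $E\setminus\{e\}$ of size $k-1$. Let $T\subseteq E\setminus\{e\}$ have $|T|=t-1$. Then $T\cup\{e\}$ is a $t$-subset of $E$ (here $t\ge 2$ ensures $t-1\ge 1$), so it lies in a unique block $B$ of $\mathcal{S}$. Hence $T\subseteq B\setminus\{e\}\in\mathfrak{B}_e$.

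For uniqueness, suppose $T\subseteq B'\setminus\{e\}$ for some block $B'\ni e$. Then $T\cup\{e\}\subseteq B'$, so $B'=B$. Distinct blocks containing $e$ give distinct elements of $\mathfrak{B}_e$, since $B$ is recovered as $(B\setminus\{e\})\cup\{e\}$. Also $1\le t-1<k-1<n-1$ holds, so $\mathcal{S}_e$ has type $S(t-1,k-1,n-1)$.

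\textbf{The ideal equality.} By \Cref{lem:quotient-subset}, $I(M(\mathcal{S})/e)=I(M(\mathcal{S})):x_e$, viewed as an ideal in the full ring. By \Cref{def:matroidfromSteiner}, $I(M(\mathcal{S}))$ is generated by the $x^B$ with $|B|=k$ and $B\notin\mathfrak{B}$. We first note that $e$ is not a loop. There is a $k$-set containing $e$ that is not a block: otherwise every $k$-set through $e$ would be a block, and two such sets sharing $t$ elements would contradict uniqueness, since $k<n$ and $t<k$ give enough room to choose them.

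The colon ideal $I(M(\mathcal{S})):x_e$ is then generated by the following monomials:
\begin{enumerate}
\item $x^{B\setminus\{e\}}$ for $k$-sets $B\ni e$ with $B\notin\mathfrak{B}$, that is, $x^A$ for $(k-1)$-subsets $A\subseteq E\setminus\{e\}$ with $A\notin\mathfrak{B}_e$;
\item $x^B$ for $k$-sets $B\not\ni e$ that are non-blocks.
\end{enumerate}
The generators in the first family are exactly the generators of $I(M(\mathcal{S}_e))$, since the bases of $M(\mathcal{S}_e)$ are the $(k-1)$-subsets of $E\setminus\{e\}$ not in $\mathfrak{B}_e$. It therefore suffices to show that each generator $x^B$ in the second family is divisible by some generator $x^A$ in the first.

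\textbf{Main obstacle: absorbing the second family.} Fix a $k$-set $B\not\ni e$ that is not a block. Any two $(k-1)$-subsets $A_1,A_2\subseteq B$ satisfy $|A_1\cup A_2|=k$, so $|(A_1\cup\{e\})\cap(A_2\cup\{e\})|=k-1\ge t$. Hence, if $A_1\cup\{e\}$ and $A_2\cup\{e\}$ were both blocks they would coincide, forcing $A_1=A_2$. So at most one of the $k\ge 2$ subsets $A\subseteq B$ with $|A|=k-1$ has $A\cup\{e\}\in\mathfrak{B}$. Any other such $A$ is not in $\mathfrak{B}_e$, so $x^A$ is a generator of $I(M(\mathcal{S}_e))$ dividing $x^B$.

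This holds even when $B$ is a block, so the non-block hypothesis on $B$ is not needed. Consequently the two ideals have the same generators up to redundancy, which proves $I(M(\mathcal{S})/e)=I(M(\mathcal{S}_e))$.
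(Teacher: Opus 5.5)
Your proof is correct. The first part, showing that $\mathcal{S}_e$ is a Steiner system of type $S(t-1,k-1,n-1)$, is the same as the paper's; the ideal equality is reached differently.

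The paper reduces immediately to comparing bases. Since \Cref{lem:quotient-subset} already identifies $I(M):x_e$ with the ideal generated by the bases of $M/e$, the paper only checks that a $(k-1)$-set $A\subseteq E\setminus\{e\}$ is a basis of $M(\mathcal{S})/e$ iff $A\cup\{e\}$ is a non-block iff $A\notin\mathfrak{B}_e$.

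You instead use \Cref{lem:quotient-subset} only to write $I(M(\mathcal{S})/e)=I(M(\mathcal{S})):x_e$, and then compute the colon by hand. This obliges you to show that the generators $x^B$ with $e\notin B$ are redundant. Your absorption argument does this correctly and directly from the Steiner axiom. It even gives the sharper fact that at most one $(k-1)$-subset $A$ of $B$ has $A\cup\{e\}\in\mathfrak{B}$. In matroid terms this is the exchange fact that some $(B\setminus\{f\})\cup\{e\}$ is a basis. That is exactly the redundancy the minimality argument inside \Cref{lem:quotient-subset} disposes of once and for all.

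So your route is more self-contained at the level of monomials, but it re-derives in this special case work the paper has already packaged; the paper's route is shorter.

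A small remark: your preliminary check that $e$ is not a loop is never used in your own argument. The colon computation does not need it, and your absorption step itself produces a non-block $k$-set through $e$. The paper's basis comparison, by contrast, tacitly needs it in the direction $A\in\mathcal{B}(M(\mathcal{S})/e)\Rightarrow A\cup\{e\}\in\mathcal{B}(M(\mathcal{S}))$. Your observation is precisely what makes that step of the paper fully explicit.
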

\begin{proof}
First, notice that every element of $\mathfrak{B}_e$ is a subset of $  E\setminus\{e\}$ and has size $k-1$. Let $A \subset E \setminus \{e\}$ be a subset of size $t-1\ge 1$. Then, $A \cup \{e\}$ is a subset of $E$ of size $t$. Therefore, there exists a unique block $B \in \mathfrak{B}$ so that $A \cup \{e\} \subseteq B$.  This implies $A \subseteq B\setminus \{e\}$, and also by construction, $B \setminus \{e\} \in \mathfrak{B}_e$.  Suppose that $A \subset B_1, B_2$ for some $B_1,B_2 \in \mathfrak{B}_e$. Then, $A \cup \{e\} \subseteq B_1 \cup \{e\}, B_2 \cup \{e\}$. Since every subset of $E$ of cardinality $t$ is in a unique element of $\mathfrak{B}$ and $B_1 \cup\{e\}, B_2 \cup \{e\} \in \mathfrak{B}$, we get that $B_1=B_2$.   Thus, every ($t-1$)-element subset of $E\setminus  \{e\} $ is contained in unique element of $\mathfrak{B}_e$. This proves that $\mathcal{S}_e=(E \setminus \{e\},\mathfrak{B}_e)$ is a Steiner system of type $S(t-1,k-1,n-1)$. 

Next, we prove that $I(M(\mathcal{S})/e)=I(M(\mathcal{S}_e))$.  It is sufficient to prove that the bases of $M(\mathcal{S})/e$ and $M(\mathcal{S}_e)$ coincide.  Let $B \in \cB(M(\mathcal{S}_e))$ be a basis of $M(\mathcal{S}_e)$.  By \Cref{def:matroidfromSteiner}, $B$ is not a block of $\mathcal{S}_e$ and $|B|=k-1$.  Hence,  $B \cup \{e\}$ is not a block of $\mathcal{S}$.  Therefore, $B \cup\{e\} \in \cB(M(\mathcal{S}))$  which implies that $B \in \cB(M(\mathcal{S})/e)$.  Conversely, if  $B \in \cB(M(\mathcal{S})/e)$, then $B \cup \{e\} \in \cB(M(\mathcal{S}))$. This implies $B \cup \{e\}$ is a size $k$ subset of $E$ which is not a block of $\mathcal{S}$. Consequently, $B$ is a size $k-1$ subset of $E \setminus \{e\}$ and $B$ is not a block of $\mathcal{S}_e$. Therefore, $B \in \cB(M(\mathcal{S}_e))$.  Since the set of bases of $M(\mathcal{S}_e)$ and $M(\mathcal{S})/e$ are the same, we have $I(M(\mathcal{S})/e)=I(M(\mathcal{S}_e))$. 
\end{proof}

We now prove the main result of this section: we obtain the asymptotic resurgence of facet ideals of matroids that arise from Steiner Systems, along with the Stanley-Reisner ideal of their duals. 

\begin{Theorem}\label{thm:asym-res-Steiner-systems}
 Let $\mathcal{S}=(E,\mathfrak{B})$ be a Steiner system of type $S(t,k,n)$ and $M(\mathcal{S})$ be the associated matroid in the sense of Definition~\ref{def:matroidfromSteiner}. Then, 
 \[
 \widehat{\rho}(I(M(\mathcal{S})))=
 \frac{\alpha(I(M(\mathcal{S})))}{\widehat{\alpha}(I(M(\mathcal{S})))}
 =\frac{k(n-k)}{n}=\frac{\alpha(I_{\Delta(M(\mathcal{S})^*)})}{\widehat{\alpha}(I_{\Delta(M(\mathcal{S})^*)})}=\widehat{\rho}(I_{\Delta(M(\mathcal{S})^*)}).
 \]
\end{Theorem}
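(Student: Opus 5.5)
By \Cref{lem:walds-steiner-systems}, $\widehat{\alpha}(I(M(\mathcal{S})))=\frac{n}{n-k}$, and $\alpha(I(M(\mathcal{S})))=k$. So the second equality is immediate, and the GHV lower bound gives $\widehat{\rho}(I(M(\mathcal{S})))\ge \frac{k(n-k)}{n}$. The two equalities on the right come from duality. By \Cref{prop:Alex duality facet}, $\widehat{\rho}(I_{\Delta(M(\mathcal{S})^*)})=\widehat{\rho}(I(M(\mathcal{S})))$. The circuits of $M(\mathcal{S})^*$ of minimal size are the complements of blocks, which have size $n-k$, so $\alpha(I_{\Delta(M(\mathcal{S})^*)})=n-k$. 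I would then get $\widehat{\alpha}(I_{\Delta(M(\mathcal{S})^*)})=\frac{n}{k}$ from \Cref{lem:bd-growth-rate}: the Alexander dual of $I_{\Delta(M(\mathcal{S})^*)}$ is $I(M(\mathcal{S}))$, whose minimal generators have degree $k$ and include all non-block $k$-sets. Alternatively, I would argue directly using $\mathbf{c}=\frac1k\sum_i\mathbf e_i$ and the averaging over the $k$-subsets that are bases. So the whole content is the upper bound $\widehat{\rho}(I(M(\mathcal{S})))\le \frac{k(n-k)}{n}$.

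For the upper bound I would use \Cref{prop:asym-single-contraction-matroid} and induct on $t$. It suffices to show $\widehat{\rho}(I(M(\mathcal{S})/e))\le \frac{k(n-k)}{n}$ for every $e\in E$. There are no loops, since $k<n$ means every element lies in some non-block $k$-set, and single elements are rank-one flats because the matroid is paving with $k\ge 2$.

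\emph{Case $t\ge 2$.} By \Cref{lem:steiner-contraction-steiner}, $I(M(\mathcal{S})/e)=I(M(\mathcal{S}_e))$, where $\mathcal{S}_e$ is of type $S(t-1,k-1,n-1)$ with $t-1<k-1<n-1$. Induction gives $\widehat{\rho}(I(M(\mathcal{S})/e))=\frac{(k-1)(n-k)}{n-1}$. We then need $\frac{k-1}{n-1}\le\frac{k}{n}$, which is equivalent to $k\le n$ and so holds.

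\emph{Base case $t=1$.} Here the blocks partition $E$ into $n/k$ disjoint $k$-sets. For $e\in E$, let $B_e$ be its block. The contraction $M/e$ has rank $k-1$, and its bases are all $(k-1)$-subsets of $E\setminus\{e\}$ except $B_e\setminus\{e\}$. So $M/e\cong \mathrm{U}^-_{k-1,n-1}$, and \Cref{thm:relaxation-to-uniform} gives its asymptotic resurgence explicitly. If $2(k-1)\le n$, the value is $\frac{(k-1)(n-k)}{n-2}$, which is at most $\frac{k(n-k)}{n}$ iff $n\ge 2k$. If $2(k-1)\ge n$, the value is $\frac{(k-2)(n-k+1)}{n-2}$. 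Since $k\mid n$ and $k<n$, we always have $n\ge 2k$, so only the first case arises and the bound holds. (For $k=2$, $t=1$, the matroid has rank two and \Cref{lem:rank-two} applies directly.)

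Combining the two cases gives $\widehat{\rho}_{c,1}(I(M(\mathcal{S})))\le\frac{k(n-k)}{n}\le\frac{k}{\widehat{\alpha}(I(M(\mathcal{S})))}$, which proves equality. The main obstacle I expect is the base case: identifying $M/e$ as almost-uniform and checking that the inequality needs $n\ge 2k$, which is exactly what the partition structure of an $S(1,k,n)$ supplies. A possible subtlety is degenerate cases such as $k-1=1$ in the induction, but these reduce to rank-two matroids and are handled by \Cref{lem:rank-two}.
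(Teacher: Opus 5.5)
Your proof is correct. Its skeleton is the paper's: induction on $t$ via \Cref{prop:asym-single-contraction-matroid} and \Cref{lem:steiner-contraction-steiner}, with the inductive step reducing to $\frac{k-1}{n-1}\le\frac{k}{n}$. The differences are in the base case and in the third equality.

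\textbf{Base case $t=1$.} The paper invokes \Cref{thm:asym-res-dual-paving} directly, since $M(\mathcal{S})$ is sparse paving, non-uniform, and satisfies $n\ge 2k$. You instead observe that each contraction $M/e$ is the almost-uniform matroid $\mathrm{U}^-_{k-1,n-1}(B_e\setminus\{e\})$. You then read off $\widehat{\rho}(I(M/e))=\frac{(k-1)(n-k)}{n-2}$ from \Cref{thm:relaxation-to-uniform}. This is the second case of the proof of \Cref{thm:asym-res-dual-paving} made explicit. There, $\frac{(k-1)(n-k)}{n-2}$ is only an upper bound on $\widehat{\rho}_{c,1}$. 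For an $S(1,k,n)$ your argument shows it is attained. Your route therefore computes $\widehat{\rho}_{c,1}$ exactly and needs only the almost-uniform computation, not \Cref{lem:walds-bound-dual-paving}. The paper's route is a one-line appeal to a more general result. Both rest on the same fact, $n\ge 2k$, which follows from $k\mid n$ and $k<n$.

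\textbf{Third equality.} The paper cites \cite[Corollary~6.7]{DFKT24}. You derive $\widehat{\alpha}(I_{\Delta(M(\mathcal{S})^*)})=\frac{n}{k}$ from \Cref{lem:bd-growth-rate}. To apply that lemma, or your averaging with $\mathbf{c}$, you must say explicitly that the bases of $M(\mathcal{S})$ form a $1$-design. Saying that the generators ``include all non-block $k$-sets'' does not by itself exhibit a design. The needed fact is that each element lies in exactly $\binom{n-1}{k-1}-\lambda_1$ bases, where $\lambda_1=\binom{n-1}{t-1}/\binom{k-1}{t-1}$ is the number of blocks through a point. This count is independent of the element.

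\textbf{Minor slip.} Singletons are rank-one flats only when $k\ge 3$; in an $S(1,2,n)$ the blocks are parallel pairs. You do not need this claim. By \Cref{def:rhocontraction}, $\widehat{\rho}_{c,1}$ is a maximum over non-loops $e$ of $\widehat{\rho}(I(M)/\{e\})$. You also treat $k=2$ separately via \Cref{lem:rank-two}.
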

\begin{proof}
The second equality follows from \Cref{lem:walds-steiner-systems} and the third follows from \cite[Corollary 6.7]{DFKT24}.  \Cref{prop:Alex duality facet} implies $\widehat{\rho}(I(M(\mathcal{S})))=\widehat{\rho}(I_{\Delta(M(\mathcal{S})^*)})$.

We prove $\widehat{\rho}(I(M(\mathcal{S})))=\frac{k(n-k)}{n}$ by induction on $t$.  Suppose $t=1$, so $\mathcal{S}$ is a Steiner system of type $S(1,k,n)$. Observe that, since $\mathcal{S}$ is a $1$-$(n,k,1)$ design, the number of blocks of $\mathcal{S}$ is $n/k$, and so $k$ divides $n$.  Since we assume $k<n$, it follows that $2k\le n$. By \Cref{rem:sparsepaving}, $M(\mathcal{S})$ is sparse paving. So, by \Cref{thm:asym-res-dual-paving}, \[
\widehat{\rho}(I(M(\mathcal{S})))=\frac{k}{\widehat{\alpha}(I(M(\mathcal{S})))}=\frac{k(n-k)}{n},
\]
where the final equality follows from \Cref{lem:walds-steiner-systems}.

Now suppose that $t>1$.  Let $e\in E$.  By~\Cref{lem:steiner-contraction-steiner}, $M(\mathcal{S})/e=M(\mathcal{S}_e),$ where $\mathcal{S}_e$ is a Steiner system of type $S(t-1,k-1,n-1)$.  By induction,  $\widehat{\rho}(I(M(\mathcal{S}_e)))=\frac{(k-1)(n-k)}{n-1}$.  Since $e$ was arbitrary, $\widehat{\rho}_{c,1}(I(M(\mathcal{S})))=\frac{(k-1)(n-k)}{n-1}$.  By \Cref{lem:walds-steiner-systems}, $\widehat{\alpha}(I(M(\mathcal{S})))=\frac{n}{n-k}$.  Applying \Cref{prop:asym-single-contraction-matroid},
\[
\widehat{\rho}(I(M(\mathcal{S})))=\max\left\lbrace \frac{k(n-k)}{n},\frac{(k-1)(n-k)}{n-1} \right\rbrace=\frac{k(n-k)}{n},
\]
where the final equality follows because $k<n$.  This completes the induction and the proof. 
\end{proof}

\begin{Remark}
If $\mathcal{S}$ is a Steiner system of type $S(t,k,n)$, we suspect that $n\ge 2k$.  If this is true, we could deduce \Cref{thm:asym-res-Steiner-systems} directly from \Cref{thm:asym-res-dual-paving} and \Cref{lem:walds-steiner-systems}.  The inequality $n\ge 2k$ holds for Steiner systems of type $S(2,k,n)$ and $S(3,k,n)$, as well as all known types we found in the literature.
\end{Remark}

\begin{Remark}
Our interest in matroids arising from Steiner systems was sparked by a paper of Ballico-Favacchio-Guardo-Milazzo ~\cite{BFGM21} and its sequel by Ballico-Favacchio-Guardo-Milazzo-Thomas~\cite{BFGMT21}.  \Cref{thm:asym-res-Steiner-systems} generalizes the asymptotic resurgence statement appearing in \cite[Corollary~4.8]{BFGM21}.  
\end{Remark}

\section{Concluding Remarks}\label{sec:conc-rmk}

We end the main body of the paper with a few remarks and additional connections to the literature.

\begin{Remark}
In~\cite[Corollary~8.8]{DFKT24}, the authors -- together with Toh\v{a}neanu -- provide bounds on the asymptotic resurgence and resurgence of ideals defining \textit{matroid configurations of points}.  Matroid configurations of points are projective varieties defined by certain specializations of Stanley-Reisner ideals of matroids.  In \cite[Question~9.5]{DFKT24}, the authors and Toh\v{a}neanu ask whether the bounds of~\cite[Corollary~8.8]{DFKT24} lift to the Stanley-Reisner ideal.  Many of our results in this paper (e.g. \Cref{thm:asym-res-dual-paving}, \Cref{thm:asym-res-PMD}, and \Cref{thm:asym-res-Steiner-systems}) are partial answers to  \cite[Question~9.5]{DFKT24} for the asymptotic resurgence.
\end{Remark}

In~\cite{GHV13}, Guardo-Harbourne-Van Tuyl prove that $\widehat{\rho}(I)\le \frac{\text{reg}(I)}{\widehat{\alpha}(I)}$ for an ideal $I$ defining a smooth subscheme of $\mathbb{P}^{n-1}$.  The bound of \Cref{cor:asym-upper-bound} implies the corresponding bound for $\widehat{\rho}(I_{\Delta(M)})$.  Note that the projective variety defined by $I_{\Delta(M)}$ is a union of coordinate hyperplanes, which typically has a non-empty singular locus.

\begin{Remark}\label{rmk:regUpperSR}

Let $M$ be a matroid of rank $k$ on the ground set $[n]$.  It follows from \cite{JV13} that $\text{reg}(I_{\Delta(M)}) =k+1-\ell(M^*)$. By \cite[Lemma 2.4, Corollary 3.2, Theorem 3.4]{DFKT24},  $\widehat{\alpha}(I_{\Delta(M)}) \le \frac{n-\ell(M^*)}{n-k}$. Observe that $I_{\Delta(M/ \loops(M^*))}$ is generated by the same minimal generators as $I_{\Delta(M)}$.   So, $\widehat{\rho}(I_{\Delta(M)})=\widehat{\rho}(I_{\Delta(M/\loops(M^*))})$.  Since $M/\loops(M^*)$ is a matroid of rank $k-\ell(M^*)$ on the ground set of size $n-\ell(M^*)$, by  \Cref{thm:asym-order} and \Cref{prop:uniform-matroid},
\[ \widehat{\rho}(I_{\Delta(M/\loops(M^*))}) \le\widehat{\rho}(I_{\Delta(\mathrm{U}_{k-\ell(M^*),n-\ell(M^*)})})=\frac{(n-k)(k-\ell(M^*)+1)}{n-\ell(M^*)}\le \frac{\mathrm{reg}(I_{\Delta(M)})}{\widehat{\alpha}(I_{\Delta(M)})}.
\]
\end{Remark}

The upper bound $\widehat{\rho}(I_{\Delta(M)})$ in \Cref{cor:asym-upper-bound}  can be significantly better than $\frac{\mathrm{reg}(I_{\Delta(M)})}{\widehat{\alpha}(I_{\Delta(M)})}$.  We illustrate this with the class of so-called \textit{Theta matroids} -- see \cite[page 664]{Oxley2011}.
\begin{Example}[Theta matroids]\label{ex:ThetaMatroids}
Let $k \ge 2$ be a positive integer, and let $U=\{u_1,\ldots,u_k\}$ and $V=\{v_1,\ldots,v_k\}$ be disjoint ordered sets. Set $E=U\cup V$.  The {\it $k$-theta matroid}, denoted by $\Theta_k$, is a rank $k$ self-dual matroid on the ground set $E$ with bases 
$$\cB(\Theta_k)=\left\{B \subset E~:~ |B|=k, \, |B \cap U| \le 2 \text{ and } B \neq (V\setminus \{v_i\} ) \cup \{u_i\} \text{ for each } i \in [k]\right\}.$$

Put $I=I_{\Delta(\Theta_k)}$.  In~\cite[Example~5.11]{DFKT24}, it is shown that $\widehat{\alpha}(I)=\frac{k}{k-2}$ if $k\ge 5$.  Thus
$
\frac{\mathrm{reg}(I)}{\widehat{\alpha}(I)}=\frac{(k+1)(k-2)}{k}
$
and
$
\frac{k(n-k)}{n-1}=\frac{k^2}{2k-1}.
$
Since $k\ge 5$, $\frac{k^2}{2k-1}<\frac{(k+1)(k-2)}{k}$.  In fact, as $k\to\infty$, the ratio between these approaches $2$. 

A computation in Macaulay2~\cite{M2} shows that $\widehat{\rho}(I_{\Delta(\Theta_3)})=\frac{3}{2}<\frac{9}{5}$ (see Row 6 in \Cref{tbl:Rank3groundset6}).  Thus $\widehat{\rho}(I)<\frac{k^2}{2k-1}$ in general.
\end{Example}

In~\cite{GHV13}, Guardo-Harbourne-Van Tuyl also prove that if $I$ is the ideal of a smooth subscheme of $\mathbb{P}^{n-1}$, then $\widehat{\rho}(I)\le \frac{\omega(I)}{\widehat{\alpha}(I)}$, where $\omega(I)$ is the largest degree of a minimal generator of $I$.  If $M$ is a matroid, then $\omega(I_{\Delta(M)})$ is the largest size of a circuit of $M$.  We conjecture this bound holds for Stanley-Reisner ideals of matroids, and have checked it on all simple matroids on ground sets up to size $8$.

\begin{Conjecture}\label{q:omegaupper}
If $M$ is a matroid then $\widehat{\rho}(I_{\Delta(M)})\le \frac{\omega(I_{\Delta(M)})}{\widehat{\alpha}(I_{\Delta(M)})}$.
\end{Conjecture}

Even if \Cref{q:omegaupper} is true, the bound of \Cref{cor:asym-upper-bound} can be better; in \Cref{ex:ThetaMatroids}, the matroid $\Theta_k$ has maximal circuit size of $k+1$, which is equal to $\mbox{reg}(I_{\Delta(\Theta_k)})$. On the other hand, if \Cref{q:omegaupper} is true, it would sometimes improve on the upper bound of \Cref{cor:asym-upper-bound}.  In particular, if $\omega(I_{\Delta(M)})=\alpha(I_{\Delta(M)})$, then \Cref{q:omegaupper} would imply that $\widehat{\rho}(I_{\Delta(M)})=\frac{\alpha(I_{\Delta(M)})}{\widehat{\alpha}(I_{\Delta(M)})}$.  Matroids that satisfy $\omega(I_{\Delta(M)})=\alpha(I_{\Delta(M)})$ are dual to so-called \textit{matroid designs}.  In particular, if \Cref{q:omegaupper} is true, then it implies \Cref{thm:asym-res-PMD}.

\begin{Remark}\label{rmk:regUpperFacet}
If $M$ is a matroid, it is generally not true that $\widehat{\rho}(I(M))\le \frac{\text{reg}(I(M))}{\widehat{\alpha}(I(M))}$.  The ideal $I(M)$ is a \textit{polymatroidal} ideal, which is well-known to have linear quotients (see~\cite[Lemma~1.3]{HT2002}) and hence a linear minimal free resolution.  Thus $\alpha(I(M))=\mathrm{reg}(I(M))$.  It follows that $\widehat{\rho}(I(M))\le\frac{\mathrm{reg}(I(M))}{\widehat{\alpha}(I(M))}$ if and only if $\widehat{\rho}(I(M))=\frac{\alpha(I(M))}{\widehat{\alpha}(I(M))}$.  That is, $\widehat{\rho}(I(M))\le \frac{\text{reg}(I(M))}{\widehat{\alpha}(I(M))}$ if and only if $I(M)$ achieves equality in \GHV{}.  Clearly, then, the question of which matroids satisfy $\widehat{\rho}(I(M))\le\frac{\mathrm{reg}(I(M))}{\widehat{\alpha}(I(M))}$ is equivalent to \Cref{ques:rhohat=alpha/alphahat}.  In particular, we have seen a number of matroids $M$ for which $I(M)$ does not achieve equality in \GHV{} (e.g. \Cref{ex:direct-sum} and, in \Cref{thm:relaxation-to-uniform}, the almost-uniform matroid $\mathrm{U}_{k,n}^{-}$ with $2k>n+1$).
\end{Remark}

We end by remarking on another key difference between the facet and Stanley-Reisner ideal.
\begin{Remark} 
If $M$ is a matroid, it follows from \cite[Corollary 3.8]{Vill-poly-normal} that the facet ideal $I(M)$ is a \textit{normal} ideal - that is, all its powers are integrally closed.  Thus $\widehat{\rho}(I(M))=\rho(I(M))$ by \cite[Corollary~4.14]{DFMS19}.  In an upcoming paper, we study the resurgence of Stanley-Reisner ideals of matroids (and of specializations which define \textit{matroid configurations}).  As part of this work, we show that the asymptotic resurgence and resurgence of $I_{\Delta(M)}$ are not necessarily equal.
\end{Remark}

\noindent \textbf{Acknowledgements:} 
We thank Elena Guardo, T\`ai H\`a, Brian Harbourne, Juan Migliore, Uwe Nagel, Adam Van Tuyl, and Rafael Villarreal for providing feedback on this paper.  DiPasquale was partially supported by NSF grant DMS--2344588. Kumar was partially supported by an AMS-Simons Travel grant.

\vspace{10 pt}

\bibliography{bibl}
\bibliographystyle{plain}

\appendix

\section{An algorithm for asymptotic resurgence of squarefree monomial ideals}\label{app:algo}

The asymptotic resurgence of a squarefree monomial ideal can be computed from its symbolic polyhedron, as shown in \cite{DFMS19}. For ${\bf w}\in \widehat{\RR^n}$,    set \[{\bf w}(I)=\min\{\langle {\bf w},{\bf v}\rangle~:~ {\bf v}\in \NP(I)\} \text{ and } \widehat{\bf w}(I)=\min\{\langle {\bf w},{\bf v}\rangle ~:~ {\bf v}\in \SP(I)\}.\]  We say ${\bf w}\in \widehat{\RR^n}$ is \textit{supported} on $I$ if ${\bf w}(I)>0$.  In this case $\widehat{\bf w}(I)>0$ also.

\begin{Theorem}[{\cite[Corollary~2.16]{DFMS19}}]\label{thm:asymptoticresurgencebyvaluations}
Let $I$ be a squarefree monomial ideal of $S$.  Then
\[
\widehat{\rho}(I)=\sup\left\lbrace \frac{{\bf w}(I)}{\widehat{\bf w}(I)}~:~ {\bf w}\mbox{ is supported on }I\right\rbrace=\max\left\lbrace\frac{{\bf w}_1(I)}{\widehat{\bf w}_1(I)},\ldots,\frac{{\bf w}_k(I)}{\widehat{\bf w}_k(I)} \right\rbrace,
\]
where ${\bf w}_1,\ldots,{\bf w}_k$ are supported on $I$ and are inward pointing normals to facets of $\NP(I)$.
\end{Theorem}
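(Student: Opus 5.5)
This result is quoted from \cite[Corollary~2.16]{DFMS19}. The plan is to reduce both the ordinary and the symbolic side to linear functionals via \Cref{lem:regular_symbolic_SP}. By that lemma, $x^{\bf a}\in I^{(s)}$ if and only if ${\bf a}/s\in\SP(I)$, and $x^{\bf a}\in\overline{I^r}$ if and only if ${\bf a}/r\in\NP(I)$. One also has $\NP(I)\subseteq\SP(I)$, since every generator lies in every minimal prime.

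The first step is to pass from $I^r$ to $\overline{I^r}$. There is a constant $c$ (Briançon--Skoda, or simply the Noetherian property of the Rees algebra's integral closure) with $\overline{I^{r+c}}\subseteq I^r$ for all $r$. Consequently, replacing $I^{rt}$ by $\overline{I^{rt}}$ does not change the asymptotic condition ``$I^{(st)}\not\subset I^{rt}$ for all $t\gg0$''. In the limit this gives
\[
\widehat{\rho}(I)=\sup\{s/r:\ s\,\SP(I)\not\subseteq r\,\NP(I)\}.
\]
Here rational points are dense, and the scaling by $t$ lets us clear denominators. Said differently, $\widehat{\rho}(I)$ is the least $\lambda$ such that $\lambda\,\SP(I)\subseteq\NP(I)$.

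The second step writes $\NP(I)$ as an intersection of halfspaces $\langle{\bf w}_j,{\bf v}\rangle\ge{\bf w}_j(I)$, one for each facet normal ${\bf w}_j$. Since $\NP(I)$ has recession cone $\RR^n_{\ge0}$, each ${\bf w}_j\ge0$. Facets with ${\bf w}_j(I)=0$ are coordinate hyperplanes, and those inequalities are automatically satisfied by $\lambda\SP(I)\subseteq\RR^n_{\ge0}$. Hence $\lambda\SP(I)\subseteq\NP(I)$ holds if and only if $\lambda\,\widehat{\bf w}_j(I)\ge{\bf w}_j(I)$ for every supported ${\bf w}_j$. Therefore the minimal $\lambda$ is $\max_j {\bf w}_j(I)/\widehat{\bf w}_j(I)$. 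This proves the second equality.

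The third step handles the first equality. For any supported ${\bf w}\ge0$, the inclusion $\widehat{\rho}\,\SP\subseteq\NP$ gives $\widehat{\rho}\,\widehat{\bf w}(I)\ge{\bf w}(I)$. This shows that the supremum is at most $\widehat{\rho}(I)$, and it is attained at the facet normals. Here positivity of $\widehat{\bf w}(I)$ comes from $\SP(I)\subseteq\RR^n_{\ge0}$ together with the fact that ${\bf w}$ is positive on some coordinate appearing in every point; this needs a short check, for instance using $\lambda_0\SP\subseteq\NP$ with $\lambda_0=h$ from the containment $I^{(hr)}\subseteq I^r$.

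The main obstacle is the first reduction, which interchanges $I^{rt}$ and its integral closure and passes from the discrete definition to the real scaling. I would handle it by uniform bounds of the form $\overline{I^{m}}\subseteq I^{m-c}$, and by approximating real ratios $s/r$ with rationals, using that both polyhedra are closed.
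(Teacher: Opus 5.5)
The paper does not prove this statement; it quotes it from \cite[Corollary~2.16]{DFMS19}, so there is no internal argument to compare against. Your argument is a correct, self-contained proof. It passes to integral closures, identifies $\widehat{\rho}(I)$ with the least $\lambda$ such that $\lambda\,\SP(I)\subseteq\NP(I)$, and tests this containment facet by facet on $\NP(I)$. In the monomial setting this is the integral-closure mechanism behind the cited result.

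Three places should be stated more carefully.

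First, the bound $\overline{I^{m+c}}\subseteq I^m$ does not show that the condition for a fixed pair $(s,r)$ is unchanged when $I^{rt}$ is replaced by $\overline{I^{rt}}$. What it shows is this: if $I^{(st)}\not\subseteq I^{rt}$ for all $t\gg 0$, then $I^{(sNt)}\not\subseteq \overline{I^{(rN+1)t}}$ for all $t\gg 0$ and every $N$. So only the supremum is preserved, which is all you need.

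Second, a rational witness ${\bf a}/N\in\SP(I)$ with $\frac{s}{r}\cdot\frac{{\bf a}}{N}\notin\NP(I)$ yields non-containment for \emph{all} large $t$ only after you replace $(s,r)$ by $(sN,rN)$. Then $x^{st{\bf a}}\in I^{(sNt)}\setminus\overline{I^{rNt}}$ for every $t\ge 1$. With the original pair, the exponent vector $st\,{\bf a}/N$ is integral only when $N\mid st$, so scaling by $t$ alone does not suffice.

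Third, the justification of $\widehat{\bf w}(I)>0$ via a coordinate ``appearing in every point'' is not right as written. The clean reason is as follows. The conditions ${\bf w}\ge 0$ and ${\bf w}(I)>0$ force $\mathrm{supp}({\bf w})$ to meet the support of every minimal generator. Hence $\mathrm{supp}({\bf w})\supseteq U$ for some $P_U\in\mbox{Min}(I)$, and so $\langle{\bf w},{\bf v}\rangle\ge\min_{i\in U}w_i>0$ for every ${\bf v}\in\SP(I)$. Your alternative via $h\,\SP(I)\subseteq\NP(I)$, coming from $I^{(hr)}\subseteq I^r$, also works. Either way, this positivity is what makes the maximum over supported facet normals in your second step finite and well defined.
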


From this, we can extract a naive algorithm to compute asymptotic resurgence $\widehat{\rho}(I)$ for a squarefree monomial ideal $I$.  Namely, first compute the facet-defining inequalities of $\NP(I)$ and take the inward pointing normals.  Then dot these normals with the vertices of $\SP(I)$ and take the minimum value.  This procedure is straightforward to implement in Macaulay2~\cite{M2}, and it is how we obtained the asymptotic resurgence values in \Cref{tbl:Rank3groundset5},\Cref{tbl:Rank3groundset6} and \Cref{tbl:Rank3groundset7}. To implement this algorithm, we used functions from the Matroids~\cite{MatroidsSource,MatroidsArticle} and Polyhedra~\cite{PolyhedraSource,PolyhedraArticle} packages in Macaulay2.

A different algorithm which uses $\SP(\dual{I})$ instead of $\NP(I)$ is described by Villareal in~\cite{Villarreal-2023}.

\section{Sharpness of \Cref{thm:asym-res-dual-paving}: the case $n=2k-1$}\label{app:sharp}

In this appendix, we show that it is not possible in general to extend \Cref{thm:asym-res-dual-paving} to matroids of rank $k$ on a ground set of size $2k-1$ with a paving dual.  We illustrate this with a series of examples.

\begin{Example}[Rank 3 on 5 elements]\label{ex:notSparsePavingZero}
Let $M$ be the rank three matroid on the five elements $\{0,1,2,3,4\}$ with bases $\{\{0,1,3\}, \{0,2,3\}, \{1,2,3\} ,\{ 0, 1, 4\},\{0,2,4\},\{1,2,4\}\}$.  Observe that $\{3,4\}$ is a circuit of $M$, so $M$ is not paving.  One can compute $3/\widehat{\alpha}(I(M))=6/5$.  Contracting at $3$ or $4$ yields a uniform matroid of rank $2$ on $3$ elements, hence $2/\widehat{\alpha}(I(M/\{3\}))=4/3>6/5$.  In fact, $\widehat{\rho}(I(M))=4/3$.  The dual of $M$ has bases $\{\{2,4\},\{1,4\},\{0,4\},\{2,3\},\{1,3\},\{0,3\}\}$ and circuits the four remaining subsets of $\{0,1,2,3,4\}$ of size two.  Hence, the dual of $M$ is paving and $\widehat{\rho}(I(M))>\frac{k}{\widehat{\alpha}(I(M))}$.  Up to isomorphism, this is the only rank three matroid on five elements with these properties.
\end{Example}

\begin{Example}[Rank 4 on 7 elements]\label{ex:notSparsePaving}
Let $M$ be the rank $4$ matroid on the ground set $E=\{0,1,2,3,4,5,6\}$ which is the dual of the simple rank $3$ matroid $N$ whose geometric representation is shown on row $16$ of \Cref{tbl:Rank3groundset7}. $N$ is a paving matroid since every simple rank $3$ matroid is paving.  There are two disjoint rank $2$ flats of $N$ -- $F=\{0,1,2,3\}$ and $G=\{4,5,6\}$.  There are $30$ bases of $N$; namely, all size three subsets of $E$ except for $\{0,1,2\},\{0,1,3\},\{0,2,3\},\{1,2,3\},$ and $\{4,5,6\}$.  We claim $G$ is a circuit of $M$.  First, it is a dependent set of $M$ because every basis of $N$ intersects $G$ non-trivially.  It is a minimal dependent set of $M$ because every size two subset of $E$ is independent in $M$.  Since $M$ has rank $4$, $M$ is not paving.

We can compute in Macaulay2~\cite{M2} that $\widehat{\alpha}(I(M))=\frac{7}{3}$ and $\widehat{\rho}(I(M))=\frac{9}{5}>\frac{12}{7}=\frac{4}{\widehat{\alpha}(I(M))}$.

Moreover, contracting any element of $M$ in the circuit $G$ produces a matroid $M'$ of rank $3$ on $6$ elements (which is not a simple matroid) so that $\widehat{\alpha}(I(M'))=\frac{5}{3}$.  Thus $\frac{3}{\widehat{\alpha}(I(M'))}=\frac{9}{5}=\widehat{\rho}(I(M))$.

Computations in Macaulay2 indicate that the matroid $M$ described in this example is the \textit{only} matroid of rank $4$ on $7$ elements satisfying that $M$ is simple, $M^*$ is paving, and $\widehat{\rho}(I(M))>\frac{4}{\widehat{\alpha}(I(M))}$.
\end{Example}

The matroids in \Cref{ex:notSparsePavingZero} and \Cref{ex:notSparsePaving} are not sparse paving matroids.  We have verified in Macaulay2~\cite{M2} that if $M$ is a sparse paving matroid of rank $3$ on $5$ elements (see \Cref{tab:master-full} and \Cref{tbl:Rank3groundset5}) or rank $4$ on $7$ elements (see \Cref{tab:master-full} and \Cref{tbl:Rank3groundset7}), then $\widehat{\rho}(I(M))=\frac{\rk(M)}{\widehat{\alpha}(I(M))}$.  One might start to suspect that if $M$ is a rank $k$ sparse paving matroid on $2k-1$ elements, then $\widehat{\rho}(I(M))=\frac{k}{\widehat{\alpha}(I(M))}$.   However, for every $k\ge 5$, there exists a sparse paving matroid of rank $k$ on $2k-1$ elements, which we denote $M_{k,2k-1}$, such that $
\widehat{\rho}(I(M))>\frac{k}{\widehat{\alpha}(I(M))}$.  We first give the construction of $M_{k,2k-1}$ and then explicitly compute the Waldschmidt constant and asymptotic resurgence of its facet ideal, proving that $\widehat{\rho}(I(M_{k,2k-1}))>\frac{k}{\widehat{\alpha}(I(M_{k,2k-1}))}$ (see \Cref{prop:sparse-paving-family}).

\begin{Example}\label{ex:sparse-paving-family}
Let $k \ge 5$ be a positive integer, and $E_k=[2k-4] \cup \{a,b,c\}$.  Note that $|E_k|=2k-1$.   Let $\sigma = (1\,2\,\cdots\,2k-4)$ in the symmetric group $S_{2k-4}$, and set $\gamma=\sigma^2$.  Since $2k-4$ is even, $\gamma$ is a permutation of order $k-2$. Now, for $A \subseteq [2k-4]$, put $\gamma^i(A):=\{\gamma^i(a)~:~ a\in A\}$.  Define $X_i:=\gamma^{i}([k-3])$ for $0\le i\le k-3$.  Observe that $\gamma(X_i)=X_{i+1 \mbox{\scriptsize{ mod }} (k-2)}$ and $\gamma^{-1}(X_i)=X_{i-1 \mbox{\scriptsize{ mod }} (k-2)}$.  Now put $C_i:=\{a,b\}\cup X_i$ for $0\le i\le k-3$, $C_{k-2}:=\{c\} \cup \{2i-1~:~i\in [k-2]\}$ and $C_{k-1}:=\{c\} \cup \{2i~:~i\in [k-2]\}$.

We prove that $\{C_0,\ldots,C_{k-1}\}$ is the collection of circuit-hyperplanes of a sparse paving matroid of rank $k-1$.  We use \Cref{prop:SparsePavingCharacterization}.  We first prove that $|X_i\cap X_j|\le k-5$ for $0\le i<j\le k-3$.  Via the symmetry induced by $\gamma$, it suffices to prove that $|X_0\cap X_j|\le k-5$ for $1\le j\le k-3$.  If $k=2s$ is even then $|X_0\cap X_i|=2s-3-2i$ for $1\le i\le s-2$, $|X_0\cap X_{k-2-i}|=2s-3-2i$ for $1\le i \le s-2$, and $|X_0\cap X_i|=\emptyset$ only for $i=s$.  If $k=2s-1$ is odd then $|X_0\cap X_i|=2s-4-2i$ for $1\le i\le s-3$, $|X_0\cap X_{k-2-i}|=2s-4-2i$ for $1\le i\le s-3$, and $|X_0\cap X_i|=\emptyset$ if and only if $i=s-2$ or $i=s-1$.  It follows that $|X_i\cap X_j|\le k-5$ for $0\le i<j\le k-3$ and so $|C_i\cap C_j|\le k-3$ for $0\le i<j\le k-3$.

Next, observe that $|C_i\cap C_j|\le k-3$ for $0\le i\le k-3$ and $k-2\le j\le k-1$ since $a,b\notin C_{k-2},C_{k-1}$.  Finally, $|C_{k-2}\cap C_{k-1}|=1\le k-3$.  By \Cref{prop:SparsePavingCharacterization}, $\{C_0,\ldots,C_{k-1}\}$ is the collection of circuit-hyperplanes of a sparse paving matroid of rank $k-1$.  We denote this matroid by $N_{k-1,2k-1}$ and its dual by $M_{k,2k-1}$.
\end{Example}

We next compute the Waldschmidt constant of $I(M_{k,2k-1})$.

\begin{Lemma}\label{lem:walds-sparse-paving-family}
Let $k \ge 5$ be an integer, and let $M_{k,2k-1}$ be the sparse paving matroid of rank $k$ on the ground set $E_k$ constructed in \Cref{ex:sparse-paving-family}. Then, $\widehat{\alpha}(I(M_{k,2k-1}))=\frac{2k+1}{k}$.     
\end{Lemma}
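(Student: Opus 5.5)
The plan is to compute $\widehat{\alpha}(I(M_{k,2k-1}))$ as the value of a linear program, using \Cref{thm:SPWaldschmidt} and \Cref{cor:SPIM}. Write $M=M_{k,2k-1}$ and $N=N_{k-1,2k-1}=M^*$. The defining inequalities of $\SP(I(M))$ are $\langle\chi_C,{\bf a}\rangle\ge 1$ for $C\in\C(N)$. Since $N$ is sparse paving of rank $k-1$, its circuits are of two kinds:
\begin{itemize}
\item the circuit-hyperplanes $C_0,\ldots,C_{k-1}$, each of size $k-1$;
\item the $k$-subsets of $E_k$ that contain none of the $C_i$.
\end{itemize}
So $\widehat{\alpha}(I(M))$ is the minimum of $\sum a_x$ over this polyhedron. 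I will show the minimum is $\frac{2k+1}{k}$ by exhibiting a feasible point (upper bound) and a dual certificate (lower bound).

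\textbf{Upper bound.} Take ${\bf a}$ with $a_x=\frac1k$ for every $x\in E_k$, and add an extra $\frac1k$ on $a$ and on $c$. The total is $\frac{2k-1}{k}+\frac{2}{k}=\frac{2k+1}{k}$. Each $C_i$ with $i\le k-3$ contains $a$, and $C_{k-2},C_{k-1}$ contain $c$. Hence every circuit-hyperplane gets $\frac{k-1}{k}+\frac1k=1$. Every $k$-subset gets at least $k\cdot\frac1k=1$. So ${\bf a}\in\SP(I(M))$ and $\widehat{\alpha}(I(M))\le\frac{2k+1}{k}$.

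\textbf{Lower bound.} I will find nonnegative weights $y_C$ on circuits of $N$ with $\sum_C y_C\chi_C\le\chi_{E_k}$ coordinatewise and $\sum_C y_C=\frac{2k+1}{k}$. Summing the circuit inequalities with these weights then gives $\sum a_x\ge\frac{2k+1}{k}$ on $\SP(I(M))$. The first attempt is:
\begin{itemize}
\item weight $\frac1{k-2}$ on each $C_0,\ldots,C_{k-3}$, which saturates $a$ and $b$;
\item weight $\frac12$ on each of $C_{k-2},C_{k-1}$, which saturates $c$;
\item total weight $\frac1k$ spread over $k$-subsets of $[2k-4]$; these are circuits of $N$ because every $C_i$ meets $\{a,b,c\}$.
\end{itemize}
The total is $1+1+\frac1k$, as required. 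If each $j\in[2k-4]$ lay in exactly $\frac{k-3}{2}$ of the $X_i$, its load would be $\frac{k-3}{2(k-2)}+\frac12+\frac{1}{2k-4}=1$, with the $k$-subsets taken uniformly, and we would be done.

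\textbf{Main obstacle.} The $X_i=\gamma^i([k-3])$ do not cover $[2k-4]$ uniformly when $k-3$ is odd. The two $\gamma$-orbits are the odd and the even elements, and $j$ lies in as many $X_i$ as there are elements of its parity class in $[k-3]$. So one parity class is covered $\frac{k-4}{2}$ times and the other $\frac{k-2}{2}$ times. The less-covered class then has slack $\frac1{k-2}$, which exceeds the marginal $\frac1k$ that a mass-$\frac1k$ distribution on $k$-subsets can supply.

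To repair this I would first try rebalancing between the two $c$-circuits: put more weight on whichever of $C_{k-2},C_{k-1}$ meets the less-covered parity class, and less on the other, keeping their sum $\le 1$. This is possible because $C_{k-2}$ consists of the odds and $C_{k-1}$ of the evens, so each parity class sees only one of them. The remaining slack should then be uniform enough to be filled by a suitable (nonuniform) distribution of weight $\frac1k$ on $k$-subsets of $[2k-4]$.

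If that is not enough, I would also use circuits of the form $\{a,c\}\cup Y$ or $\{a,b,c\}\cup Y$ with $Y\subseteq[2k-4]$, checking that they contain no $C_i$. For these I would slightly lower the weights on the $C_i$ with $i\le k-3$ to make room on $a$ and $b$. I expect a short case split on the parity of $k$, with a small balancing computation in each case, to give an exact certificate of value $\frac{2k+1}{k}$.
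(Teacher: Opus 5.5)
Your upper bound is the paper's point ${\bf v}$ with the roles of $a$ and $b$ exchanged. For odd $k$, your dual weights are exactly the paper's Case~1 multipliers divided by $2k-4$: $\frac{1}{k-2}$ on $C_0,\dots,C_{k-3}$, $\frac12$ on each of $C_{k-2},C_{k-1}$, and mass $\frac1k$ spread uniformly over the $k$-subsets of $[2k-4]$. So those parts are complete, and your parity count for the $X_i$ is right.

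For even $k$, however, you stop at a plan. You never fix the weights, and you leave open whether further circuits are needed. As written, the lower bound is unproved in that case. No idea is missing, though: your first repair works exactly, with uniform filling and no circuits through $a,b,c$.

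Note that the two $c$-weights must sum to exactly $1$, not merely at most $1$. Keep $\frac1{k-2}$ on each $C_i$ with $i\le k-3$. Put $\frac{k-3}{2(k-2)}$ on $C_{k-2}$ and $\frac{k-1}{2(k-2)}$ on $C_{k-1}$. Odd elements lie in $\frac{k-2}{2}$ of the $X_i$ and even elements in $\frac{k-4}{2}$. So every $j\in[2k-4]$ then has load exactly $\frac{2k-5}{2k-4}$. The uniform mass $\frac1k$ on $k$-subsets of $[2k-4]$ supplies exactly $\frac{1}{2k-4}$ to each $j$. Hence all loads equal $1$ and the total weight is $2+\frac1k$.

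In inequality form, take twice the sum of the $C_i$-inequalities ($i\le k-3$), plus $(k-3)$ times the $C_{k-2}$-inequality, plus $(k-1)$ times the $C_{k-1}$-inequality. This yields
\[
(2k-4)(u_a+u_b+u_c)+(2k-5)\sum_{j=1}^{2k-4}u_j\ \ge\ 4k-8,
\]
which is the same inequality as for odd $k$. The rest of Case~1 then goes through verbatim. In general, multipliers $2k-5-2\lceil\frac{k-3}{2}\rceil$ on $C_{k-2}$ and $2k-5-2\lfloor\frac{k-3}{2}\rfloor$ on $C_{k-1}$ handle both parities at once.

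The paper treats even $k$ with a different tight certificate:
\begin{itemize}
\item multipliers $\frac1k$ on the $C_i$ ($i\le k-3$), except $\frac2k$ on $C_0$ and on $C_{(k-2)/2}$; the paper prints the latter as $C_{\frac{k-1}{2}}$, which is not an integer index for even $k$;
\item $\frac{k-2}{2k}$ on $C_{k-2}$ and $\frac{k+2}{2k}$ on $C_{k-1}$;
\item $\frac1k$ on the single circuit $\{1,3,\dots,2k-5\}\cup\{k-2,2k-4\}$.
\end{itemize}
This works because $X_0\sqcup X_{(k-2)/2}\sqcup\{k-2,2k-4\}=[2k-4]$. Once you write out your rebalancing, it gives one parity-independent computation and avoids that combinatorial bookkeeping.
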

\begin{proof}
First, we prove that $\widehat{\alpha}(I(M)) \le \frac{2k+1}{k}$.  In order to achieve that we show that $${\bf v}=\frac{1}{k}\sum\limits_{i=1}^{2k-4}{\bf  e}_i +\frac{1}{k}{\bf e}_a+\frac{2}{k}{\bf e}_b+\frac{2}{k}{\bf e}_c\in \SP(I(M_{k,2k-1})).$$ By \Cref{cor:SPIM}, the defining inequalities of $\SP(I(M_{k,2k-1}))$ correspond to the circuits of the dual matroid $N_{k-1,2k-1}$. Since $N_{k-1,2k-1}$ is a sparse paving matroid of rank $k-1$, every circuit $C \in \mathcal{C}(N_{k-1,2k-1})$ satisfies $|C| \ge k-1$. 
Moreover, the circuits of size $k-1$ are precisely the circuit-hyperplanes of $N_{k-1,2k-1}$, as described in \Cref{ex:sparse-paving-family}. So,  $\langle \chi_{C},{\bf v}\rangle \ge \frac{|C|}{k} = 1$ for all $C \in \mathcal{C}(N_{k-1,2k-1})$ with $|C|=k$. For $ C \in \mathcal{CH}(N_{k-1,2k-1})$, either $b \in C$ or $c \in C$, which implies  $\langle \chi_{C},{\bf v}\rangle \ge \frac{|C|-1}{k}+\frac{2}{k}= 1$. Therefore, ${\bf v} \in \SP(I(M))$, and hence, by \Cref{thm:SPWaldschmidt}, $\widehat{\alpha}(I(M)) \le \frac{2k+1}{k}$. 

Next, we show that $\widehat{\alpha}(I(M)) \ge  \frac{2k+1}{k}$.  By \Cref{thm:SPWaldschmidt}, it is sufficient to show that  $\sum\limits_{i\in E_k} u_i \ge \frac{2k+1}{k}$ for every ${\bf u} \in \SP(I(M))$. Let ${\bf u} \in \SP(I(M))$. Then, $\langle \chi_{C},{\bf u} \rangle \ge 1$  for all $C \in \mathcal{C}(N_{k-1,2k-1})$. We split this proof into the following cases: 

\textbf{Case 1:} Assume $k$ is odd. Since $k-3$ is even, each $e \in [2k-4]$ belongs to exactly $\frac{k-3}{2}$ of the circuit-hyperplanes $C_0,\ldots,C_{k-3}$. Summing the inequalities $\langle \chi_{C_i}, \mathbf{u} \rangle \ge 1$ for $0 \le i \le k-3$ yields
\begin{equation}\label{eq:ab}
(k-2)({u}_a+{u}_b)
+\frac{k-3}{2}\sum_{i=1}^{2k-4}{u}_i
\ge k-2.
\end{equation}
Similarly, adding the inequalities 
$\langle \chi_{C_{k-2}}, \mathbf{u} \rangle \ge 1$ and 
$\langle \chi_{C_{k-1}}, \mathbf{u} \rangle \ge 1$ gives
\begin{equation}\label{eq:c}
2{u}_c
+\sum_{i=1}^{2k-4}{u}_i
\ge 2.
\end{equation}
Multiplying \eqref{eq:ab} by $2$ and \eqref{eq:c} by $k-2$, and then summing, we obtain
\begin{equation}\label{eq:abc}
 (2k-4)({u}_a+{u}_b+{u}_c)
+(2k-5)\sum_{i=1}^{2k-4}{u}_i
\ge 4k-8.   
\end{equation}
Note that any subset $C \subset [2k-4]$ of size $k$ contains none of the sets $C_i$, 
and hence belongs to $\mathcal{C}(N_{k-1,2k-1})$. 
Moreover, each element $e \in [2k-4]$ is contained in exactly 
${2k-5 \choose k-1}$ such subsets. 
Summing the ${2k-4 \choose k}$ corresponding inequalities yields
\[
{2k-5 \choose k-1} 
\sum_{i=1}^{2k-4} {u}_i 
\ge 
{2k-4 \choose k}.
\]
Consequently,
\[
\sum_{i=1}^{2k-4} {u}_i 
\ge 
\frac{2k-4}{k}.
\]
Summing this with \eqref{eq:abc} gives
\[
(2k-4)({u}_a+{u}_b+{u}_c)
+(2k-4)\sum_{i=1}^{2k-4}{u}_i
\ge 
4k-8+\frac{2k-4}{k}.
\] Thus, \[
{u}_a+{u}_b+{u}_c
+\sum_{i=1}^{2k-4}{u}_i
\ge 
\frac{2k+1}{k}.
\]

\textbf{Case 2:} Assume $k$ is even. Since $k-3$ is odd, each even $e \in [2k-4]$ belongs to exactly $\lfloor{\frac{k-3}{2}}\rfloor$ of the circuit-hyperplanes $C_0,\ldots,C_{k-3}$, and each odd $e \in [2k-4]$ belongs to exactly $\lceil{\frac{k-3}{2}}\rceil$ of the circuit-hyperplanes $C_0,\ldots,C_{k-3}$. Summing the inequalities $\langle \chi_{C_i}, \mathbf{u} \rangle \ge 1$ for $0\le i \le k-3$ yields
\begin{equation}\label{eq:ab-even}
(k-2)({u}_a+{u}_b)
+\left\lceil {\frac{k-3}{2}}\right \rceil \sum_{i=1}^{k-2}{u}_{2i-1}+\left\lfloor {\frac{k-3}{2}}\right \rfloor \sum_{i=1}^{k-2}{u}_{2i}
\ge k-2.
\end{equation}
Now, adding $\lceil{\frac{k-3}{2}}\rceil$ times 
$\langle \chi_{C_{k-2}}, \mathbf{u} \rangle \ge 1$ and $\lceil{\frac{k-3}{2}}\rceil+2$ times 
$\langle \chi_{C_{k-1}}, \mathbf{u} \rangle \ge 1$ to \eqref{eq:ab-even}, we get 
\begin{equation}\label{eq:abc_even}
(k-2)({u}_a+{u}_b) +k u_c
+(k-2) \sum_{i=1}^{k-2}{u}_{2i-1}
+(k-1) \sum_{i=1}^{2k-2}{u}_{2i}
\ge 2k-2.
\end{equation} Adding the inequalities $\langle \chi_{C}, \mathbf{u} \rangle \ge 1$, 
with $C=\{2i-1 : i \in [k-2]\} \cup \{k-2,\,2k-4\},$
$\langle \chi_{C_0}, \mathbf{u} \rangle \ge 1$, and 
$\langle \chi_{C_{\frac{k-1}{2}}}, \mathbf{u} \rangle \ge 1$
to \eqref{eq:abc_even} yields
\[
k({u}_a+{u}_b+{u}_c)
+k\sum_{i=1}^{2k-4}{u}_i
\ge 2k+1.
\]
Consequently,
\[
{u}_a+{u}_b+{u}_c
+\sum_{i=1}^{2k-4}{u}_i
\ge \frac{2k+1}{k}.
\]

Thus, in both cases, we have $\sum\limits_{i\in E_k} u_i \ge \frac{2k+1}{k}$ for every ${\bf u} \in \SP(I(M))$, and hence, by \Cref{thm:SPWaldschmidt}, $\widehat{\alpha}(I(M))=\frac{2k+1}{k}$. 
\end{proof}

\begin{Proposition}\label{prop:sparse-paving-family}
Let $k \ge 5$ be an integer, and let $M_{k,2k-1}$ be the sparse paving matroid of rank $k$ on the ground set $E_k$ constructed in \Cref{ex:sparse-paving-family}. Then, $\widehat{\rho}(I(M_{k,2k-1}))= \frac{(k-1)^2}{2k-3}> \frac{k}{\widehat{\alpha}(I(M))}$. 
\end{Proposition}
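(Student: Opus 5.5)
The plan is to use \Cref{prop:asym-single-contraction-matroid}:
\[
\widehat{\rho}(I(M))=\max\left\{\frac{k}{\widehat{\alpha}(I(M))},\ \widehat{\rho}_{c,1}(I(M))\right\},\qquad M=M_{k,2k-1}.
\]
By \Cref{lem:walds-sparse-paving-family}, the first term is $\frac{k^2}{2k+1}$. The whole argument then reduces to showing $\widehat{\rho}_{c,1}(I(M))=\frac{(k-1)^2}{2k-3}$ and comparing the two numbers. The comparison is routine: $(k-1)^2(2k+1)=2k^3-3k^2+1>2k^3-3k^2=k^2(2k-3)$, so $\frac{(k-1)^2}{2k-3}>\frac{k^2}{2k+1}$. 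This gives both the stated value and the strict inequality.

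\textbf{Step 1: the contractions are non-uniform with paving duals.} $M$ is sparse paving with no loops, so every single-element contraction $M/e$ has rank $k-1$ on $2k-2$ elements. Its dual is $N_{k-1,2k-1}\backslash e$. This dual is paving, because its circuits are the circuits of $N_{k-1,2k-1}$ avoiding $e$, which have size at least $k-1$.

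The circuit-hyperplanes of $M$ are the complements $E_k\setminus C_i$. So $M/e$ is uniform exactly when $e$ lies in no $E_k\setminus C_i$, i.e.\ when $e$ lies in every $C_i$. This never happens:
\begin{itemize}
\item $a,b\notin C_{k-2}$;
\item $c\notin C_0$;
\item no element of $[2k-4]$ lies in both $C_{k-2}$ and $C_{k-1}$.
\end{itemize}
Hence every $M/e$ is non-uniform.

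\textbf{Step 2: upper bound on $\widehat{\rho}_{c,1}$.} Apply \Cref{cor:asym-upper-bound} to $M/e$ with rank $k-1$ and $n'=2k-2$. Since $2(k-1)\le n'+1$, it gives
\[
\widehat{\rho}(I(M/e))\le \frac{(k-1)(k-1)}{2k-3}\quad\text{for every } e.
\]
This step matters, because if some $M/e$ were uniform the value would instead be $k/2$, which is larger.

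\textbf{Step 3: the bound is attained at $e=a$.} The circuit-hyperplanes of $(M/a)^*=N_{k-1,2k-1}\backslash a$ are those $C_i$ avoiding $a$, namely $C_{k-2}$ and $C_{k-1}$. Neither contains $b$, so $A_{k-1}((M/a)^*)\neq E_k\setminus\{a\}$.

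Now apply \Cref{cor:dual-paving-not-covering}(c) to $M/a$, which has rank $k-1$ on $2k-2$ elements and satisfies $2(k-1)\le 2k-1$. It gives
\[
\widehat{\rho}(I(M/a))=\frac{(k-1)(k-1)}{2k-3}.
\]
Equivalently, one can check directly that $\frac{1}{k-1}\sum_{i\ne a,b}\mathbf e_i\in\SP(I(M/a))$.

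\textbf{Main obstacle.} The only delicate point is confirming that no single-element contraction is uniform, together with the identification of the circuit-hyperplanes of $N_{k-1,2k-1}\backslash a$. Both come down to checking membership of $a,b,c$ and of $[2k-4]$ in the sets $C_i$ from \Cref{ex:sparse-paving-family}. Everything else is quoted from earlier results.
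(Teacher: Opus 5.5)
Your proposal is correct and is essentially the paper's proof. The lower bound comes from \Cref{cor:dual-paving-not-covering}(c) applied to $M/a$. The matching upper bound comes from non-uniformity of every $M/e$: the paper cites \Cref{thm:asym-res-uniform-contraction}(b), which is just \Cref{cor:asym-upper-bound} applied to each $M/e$, exactly as you do. You then compare with $k/\widehat{\alpha}(I(M))=k^2/(2k+1)$ via \Cref{prop:asym-single-contraction-matroid}. Your criterion for non-uniformity of $M/e$, that $e$ avoids some $C_i$, is the right one, and it is stated more precisely than the paper's phrasing (each element lying in some circuit-hyperplane of $N_{k-1,2k-1}$).
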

\begin{proof}
We refer to the construction of $N_{k-1,2k-1}$ and $M_{k,2k-1}$ in \Cref{ex:sparse-paving-family}.  Observe that $\mathcal{CH}((M_{k,2k-1}/a)^*)=\mathcal{CH}(M_{k,2k-1}^*\setminus a)=\mathcal{CH}(N_{k-1,2k-1}\setminus a)=\{C_{k-2},C_{k-1}\}$.  So,  $\bigcup\limits_{C \in \mathcal{CH}((M_{k,2k-1}/a)^*)} C =C_{k-2} \cup C_{k-1}\neq E_k\setminus \{a\}$, and hence, by \Cref{cor:dual-paving-not-covering} (c), $\widehat{\rho}(I(M_{k,2k-1}/a))=\frac{(k-1)^2}{2k-3}$ and thus $\widehat{\rho}_{c,1}(I(M_{k,2k-1}))\ge \frac{(k-1)^2}{2k-3}$.

Since every element of the ground set $E_{k}=[2k-4]\cup \{a,b,c\}$ of $N_{k-1,2k-1}$ is contained in some circuit hyperplane of $N_{k-1,2k-1}$, $N_{k-1,2k-1}\backslash e\neq \mathrm{U}_{k-1,2k-2}$ for any $e\in E_k$.  Hence $M_{k,2k-1}/e=(N_{k-1,2k-1}\backslash e)^*\neq \mathrm{U}_{k-1,2k-2}$ for any $e\in E_k$ and it follows from \Cref{thm:asym-res-uniform-contraction} (b) that $\widehat{\rho}_{c,1}(I(M_{k,2k-1}))\le \frac{(k-1)^2}{2k-3}$.  Thus $\widehat{\rho}_{c,1}(I(M_{k,2k-1}))=\frac{(k-1)^2}{2k-3}$.  

Observe that $\widehat{\rho}_{c,1}(I(M_{k,2k-1}))=\frac{(k-1)^2}{2k-3}>\frac{k^2}{2k+1}= \frac{k}{\widehat{\alpha}(I(M))}$, where the final equality holds by \Cref{lem:walds-sparse-paving-family}.  Thus, by \Cref{prop:asym-single-contraction-matroid}, $\widehat{\rho}(I(M_{k,2k-1}))=\frac{(k-1)^2}{2k-3}$.
\end{proof}

\section{Asymptotic resurgence of simple matroids on small ground sets}\label{app:small-rank}

In this section, we first record in \Cref{tab:master-full} coarse data for all simple matroids on ground sets of size 5,6,7, or 8, mainly with an eye towards \Cref{ques:rhohat=alpha/alphahat}.  We used the database of all matroids on ground sets up to size seven in the Matroids package~\cite{MatroidsSource,MatroidsArticle} in Macaulay2~\cite{M2}, and computed the asymptotic resurgence of the facet ideals of these matroids using the algorithm outlined in \Cref{app:algo}.

In \Cref{tbl:Rank3groundset5},\Cref{tbl:Rank3groundset6}, and \Cref{tbl:Rank3groundset7}, we give exact asymptotic resurgence values of the facet and Stanley-Reisner ideals of all simple matroids of rank three on ground sets up to size seven and compare these values to \GHV{}.  The facet ideal of a rank two matroid achieves equality in \GHV{} by \Cref{lem:rank-two}, so we only consider matroids of rank at least three.

We briefly discuss the matroids of rank three on three or four elements.  The only simple matroid of rank three on three elements is the uniform matroid $\mathrm{U}_{3,3}$.  On four elements, there are two non-isomorphic simple matroids of rank three~\cite[Table~4]{MMIB2012}.  One of these is $\mathrm{U}_{3,4}$, and the other is the almost uniform matroid $\mathrm{U}_{3,4}^{-}$ discussed in \Cref{ex:rhohatnotalphaoveralphahat}.  Both of these have a facet ideal which achieves equality in \GHV{}.

There are $4$ (respectively $9$, $23$) non-isomorphic simple matroids of rank three on $5$ (respectively $6$, $7$) elements (see for instance~\cite[Table~4]{MMIB2012}).  The paper~\cite{MMIB2012} provides an associated database of matroids, available as a Python package, which we accessed via Sage~\cite{sagemath}.

In \Cref{tbl:Rank3groundset5}, \Cref{tbl:Rank3groundset6}, and \Cref{tbl:Rank3groundset7}, the second column is a \textit{geometric representation} of the corresponding matroid (see~\cite[Section~1.5]{Oxley2011}).  The geometric representation consists of a number of points (the ground set) and a number of `lines'  (possibly curved) containing at least three points of the ground set.  Since the matroid is rank three and simple, every point is a rank one flat, and every pair of points determines a rank two flat.  The lines represent rank-two flats with more than two elements.  Any subset of three points not on a line is a basis of the matroid.  Every rank three simple matroid has a geometric representation in terms of points and lines in this way~\cite[Proposition~1.5.6]{Oxley2011}.  We produced the geometric representations using the matroid functions available in Sage.

The third column in \Cref{tbl:Rank3groundset5}, \Cref{tbl:Rank3groundset6}, and \Cref{tbl:Rank3groundset7} records the name or notation for the corresponding matroid given by Oxley in~\cite{Oxley2011} (if we were able to find one) along with paving properties of the matroid.

\begin{Remark}[Paving matroids and initial degree]\label{rem:appinitial}
If $M$ is a rank $3$ matroid, then $\alpha(I(M))=3$ by definition.  If $M$ is a rank $3$ simple matroid, we claim that $\alpha(I_{\Delta(M)})=3$ unless $M$ is a rank $3$ uniform matroid, in which case $\alpha(I_{\Delta(M)})=4$.  We see this as follows.  \textit{Simple} matroids are, by definition, those matroids that have no loops or parallel elements.  Equivalently, a simple matroid is a matroid with no circuits of size one or two. Thus, a rank three simple matroid has circuits of size $3$ and $4$ only (i.e., a rank three simple matroid is \textit{paving} -- see \Cref{rem:sparsepaving} for the definition of a paving matroid).  If $M$ only has circuits of size four, it is a uniform matroid and $\alpha(I_{\Delta(M)})=4$.  Otherwise, $M$ has at least one circuit of size $3$ and so $\alpha(I_{\Delta(M)})=3$.
\end{Remark}

\begin{Remark}[Matroids with a paving dual]\label{rem:appsparsepaving} 
If a rank three matroid on at least five elements has a dual that is paving, then its facet ideal achieves equality in \GHV{} by \Cref{thm:asym-res-dual-paving}.  This explains the coincidence of columns four and five in \Cref{tbl:Rank3groundset5},\Cref{tbl:Rank3groundset6}, and \Cref{tbl:Rank3groundset7} whenever the matroid has a dual that is paving.  Observe that a simple rank three matroid has a dual that is paving if and only if it is sparse paving (this property is recorded in column 3 of Tables~\ref{tbl:Rank3groundset5} --\ref{tbl:Rank3groundset7}).
\end{Remark}

\begin{Remark}[Waldschmidt constant for Stanley-Reisner ideals of sparse paving matroids]\label{rem:appwaldsparsepaving}
If $M$ is a rank $k$ sparse paving matroid  on a ground set of size $n$, then $\widehat{\alpha}(I_{\Delta(M)})=\frac{n}{n-k}$ by ~\cite[Corollary~6.4]{DFKT24} or~\cite[Corollary 7.11]{ManNgu}.  Together with \Cref{rem:appinitial} and \Cref{rem:appsparsepaving}, this explains the values of $\frac{\alpha(I_{\Delta(M)})}{\widehat{\alpha}(I_{\Delta(M)})}$ in rows 1-3 of \Cref{tbl:Rank3groundset5}, rows 1-6 of \Cref{tbl:Rank3groundset6}, and rows 1-14 of \Cref{tbl:Rank3groundset7}.
\end{Remark}

\begin{Remark}[Waldschmidt constant for Stanley-Reisner ideals and the Tutte polynomial]\label{rem:appSRTutte}
In~\cite[Theorem~3.4]{DFKT24}, a formula for the Waldschmidt constant of the Stanley-Reisner ideal of a matroid $M$ is given in terms of the so-called \textit{generalized Hamming weights} of $M$.  It follows from~\cite[Section~4]{JRV2016} that the generalized Hamming weights of $M$ can be detected from the Tutte polynomial of $M$ (\cite{JRV2016} calls generalized Hamming weights simply \textit{higher weights}).  Hence, putting these results together, it follows that the Waldschmidt constant of the Stanley-Reisner ideal of a matroid can be determined from its Tutte polynomial.
\end{Remark}

\begin{Remark}[Waldschmidt constant of the facet ideal is not a Tutte polynomial invariant]\label{rem:appFTutte}
The matroids $Q_6$ and $R_6$ in \Cref{tbl:Rank3groundset6} are known to have the same Tutte polynomial, namely $$T_{Q_6}(x,y)=x^3+y^3+3x^2+2xy+3y^2+4x+4y=T_{R_6}(x,y).$$  However, we can see from \Cref{tbl:Rank3groundset6} that $\widehat{\alpha}(I(Q_6))=5/3$ and $\widehat{\alpha}(I(R_6))=2$.  Thus, the Waldschmidt constant of the facet ideal of a matroid cannot be detected from the Tutte polynomial of the matroid, unlike the Waldschmidt constant of the Stanley-Reisner ideal (see \Cref{rem:appSRTutte}).  Observe from rows 3 and 4 of \Cref{tbl:Rank3groundset6} that $\widehat{\alpha}(I_{\Delta(Q_6)})=\widehat{\alpha}(I_{\Delta(R_6)})=2$.
\end{Remark}

\begin{Remark}[Asymptotic resurgence is not a Tutte polynomial invariant]\label{rem:appAsymTutte}
Following up on \Cref{rem:appFTutte}, observe that the asymptotic resurgences $\widehat{\rho}(I(Q_6))$ and $\widehat{\rho}(I(R_6))$ are also different.  It follows that the asymptotic resurgence of the facet ideal of a matroid cannot be detected from its Tutte polynomial.  By \Cref{prop:Alex duality facet}, it follows that the asymptotic resurgence of the Stanley-Reisner ideal of a matroid also cannot be detected from its Tutte polynomial.  
\end{Remark}

\begin{Remark}[First two rows and last row]
The entries in the first row of \Cref{tbl:Rank3groundset5}, \Cref{tbl:Rank3groundset6}, and \Cref{tbl:Rank3groundset7} follow from \Cref{prop:uniform-matroid}.  The matroids in the second row are almost-uniform matroids on a ground set of size $n$ (where $n=5,6,7$), and their duals are almost-uniform matroids. Therefore, the entries of row two in \Cref{tbl:Rank3groundset5}, \Cref{tbl:Rank3groundset6}, and \Cref{tbl:Rank3groundset7} are obtained by \Cref{rem:appwaldsparsepaving},~\Cref{lem:relaxation-to-uniform}, and~\Cref{thm:relaxation-to-uniform}.

The last row of \Cref{tbl:Rank3groundset5}, \Cref{tbl:Rank3groundset6}, and \Cref{tbl:Rank3groundset7} corresponds to the direct sum $\mathrm{U}_{2,n-1}\oplus \mathrm{U}_{1,1}$ for $n=5,6,7$.  Thus the entries of this row for the facet ideal can be computed using \Cref{prop:uniform-matroid} and \Cref{ex:direct-sum}.

In fact, $\mathrm{U}_{2,n-1}\oplus \mathrm{U}_{1,1}$ is the only rank $3$ simple matroid on $n$ elements that can be written as a direct sum of matroids, unless $n=3$ in which case we also have $\mathrm{U}_{1,1}\oplus \mathrm{U}_{1,1} \oplus \mathrm{U}_{1,1}$.  Every other direct sum of a rank one matroid with a rank two matroid will have parallel elements.
\end{Remark}

\begin{Remark}[Weak Order]\label{rem:chainsinweakorder}
Suppose $M$ and $N$ are rank $3$ matroids with geometric representations $G$ and $H$, respectively.  Then $M\preceq N$ in weak order if and only if every basis in $M$ is also a basis in $N$.  That is, if three points do not lie on a line in the representation $G$, then those three points do not lie on a line in the representation $H$.  Equivalently, if three points are on a line in $H$, then they must be on a line in $G$.  That is, the lines of $H$ are contained in the lines of $G$.  Using this interpretation of weak order in terms of geometric representations, one can determine how the matroids in \Cref{tbl:Rank3groundset5}, \Cref{tbl:Rank3groundset6}, and \Cref{tbl:Rank3groundset7} are related to each other in the weak order.  It is interesting to note that there are chains in the weak order that all attain the upper bound of \Cref{cor:asym-upper-bound}.  For instance, if we let $M_i$ denote the matroid with geometric representation depicted in row $i$ of \Cref{tbl:Rank3groundset7}, observe that $M_6\prec M_4\prec M_2$ and $\widehat{\rho}(I(M_i))=2$ for $i=2,4,6$.  Here $2$ is the upper bound (from \Cref{cor:asym-upper-bound}) on both facet and Stanley-Reisner ideals of non-uniform rank $3$ matroids on a ground set of size $7$.  Even longer chains appear for the Stanley-Reisner ideal.  Observe $M_{11}\prec M_7 \prec M_4 \prec M_2$ and $\widehat{\rho}(I_{\Delta(M_i)})=2$ for $i=2,4,7,11$.  The same occurs for $M_{23}\prec M_{21}\prec M_{15}\prec M_2$.
\end{Remark}

\newpage

\begin{table}[h!]
\centering
\small
\renewcommand{\arraystretch}{2}
\setlength{\tabcolsep}{5pt}

\caption{Counts of non-isomorphic simple matroids, sparse paving matroids, and matroids with a paving dual, on ground sets up to size eight, stratified by rank and organized according to
\Cref{ques:rhohat=alpha/alphahat}.}
\label{tab:master-full}

\begin{tabular}{|c|c|c|c|c|c|c|c|}

\hline
\multicolumn{8}{|c|}{Number of simple non-isomorphic matroids $M$ satisfying:} \\
\hline

\multicolumn{2}{|c|}{} &
\multicolumn{3}{c|}{$\widehat{\rho}(I(M))=\dfrac{\alpha(I(M))}{\widehat{\alpha}(I(M))}$} &
\multicolumn{3}{c|}{$\widehat{\rho}(I(M))>\dfrac{\alpha(I(M))}{\widehat{\alpha}(I(M))}$} \\
\cline{3-8}

\hline

$|E|$ & rank &Total& Sparse Paving & Paving dual &Total & Sparse Paving & Paving Dual \\ 
\hline 

5 & 2 & 1 & 1 & 1 & 0 &0 &0 \\ \hline
  & 3 & 3 & 3 & 3 & 1 & 0 & 0\\ \hline
  & 4 & 1 & 1 & 1  & 2& 1 & 2\\ \hline
  & Total & 5 & 5 & 5 & 3 & 1 & 2\\ \hline\hline

6 & 2 & 1 & 1 & 1 & 0 & 0 & 0\\ \hline
  & 3 & 7 & 6 & 6 & 2 & 0 & 0 \\ \hline
  & 4 & 5 & 2 & 5 & 6 & 2 & 2 \\ \hline
  & 5 & 1 & 1 & 1 &  3 & 1 & 3 \\ \hline
  & Total & 14 & 10 & 13 & 11 & 3 & 5\\ \hline\hline

7 & 2 & 1 & 1 &1  & 0 & 0 &0\\ \hline
  & 3 & 19 & 14 & 14 & 4 & 0 & 0\\ \hline
  & 4 & 28 & 14 & 19 & 21 & 0 & 1\\ \hline
  & 5 & 6 & 2 & 6 &  16 & 2 & 5\\ \hline
  & 6 & 1 & 1 & 1 &  4 & 1 & 4 \\ \hline
  & Total & 55 & 32 & 37 & 45 & 3 & 12\\ \hline\hline

8 & 2 & 1 & 1 & 1 & 0 & 0 & 0\\ \hline
  & 3 & 62 & 32 & 32 & 6 & 0 & 0 \\ \hline
  & 4 & 521 & 270 & 317 & 96 & 0 & 0 \\ \hline
  & 5 & 84 & 25 & 52 & 133 & 7 & 13\\ \hline
  & 6 & 9 & 1 & 9 & 31 & 4 & 9\\ \hline
  & 7 & 1 & 1 & 1 & 5 &1 & 5\\ \hline
  & Total & 678 & 330 & 412 & 271 & 12 & 27\\ \hline

\end{tabular}
\vspace{2em}

\end{table}

\newpage
\begin{longtable}[c]{|c| c | p{1.6 cm} | c | c | c | c |}
\caption{
Asymptotic resurgence of facet and Stanley-Reisner ideals of the four simple matroids (up to isomorphism) of rank three on a ground set of size five.\label{tbl:Rank3groundset5}
}\\

\hline
Row \# &
Geometric representation of $M$ & Name/ Properties  
& $\frac{\alpha(I(M))}{\widehat{\alpha}(I(M))}$ & $\widehat{\rho}(I(M))$ & $\frac{\alpha(I_{\Delta(M)})}{\widehat{\alpha}(I_{\Delta(M)})}$ & $\widehat{\rho}(I_{\Delta(M)})$ \\
 \hline

\endfirsthead

\hline
 \multicolumn{7}{|c|}{Continuation of Table \ref{tbl:Rank3groundset5}}\\
\hline
Row \# & Geometric representation of $M$ & Name in~\cite{Oxley2011} & $\frac{\alpha(I(M))}{\widehat{\alpha}(I(M))}$ & $\widehat{\rho}(I(M))$ & $\frac{\alpha(I_{\Delta(M)})}{\widehat{\alpha}(I_{\Delta(M)})}$ & $\widehat{\rho}(I_{\Delta(M)})$ \\
 \hline

\endhead

\hline

\endfoot

\hline

\endlastfoot

    \raisebox{45 pt}{1} & \includegraphics[scale=.3]{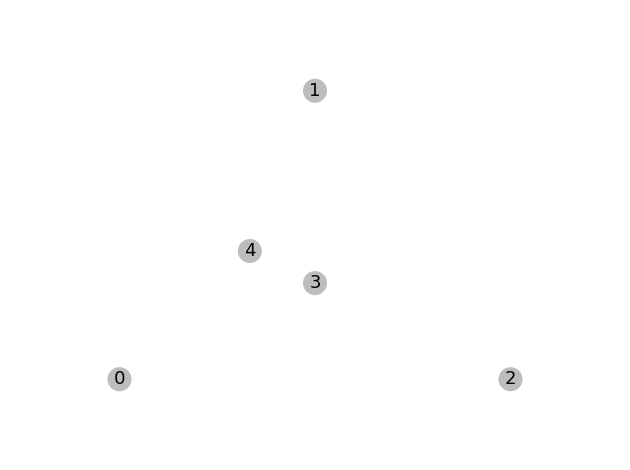}  & \raisebox{45 pt}{\parbox{2 cm}{$\mathrm{U}_{3,5}$\\[10 pt] sparse\\paving}} & \raisebox{45 pt}{$\dfrac{9}{5}$} & \raisebox{45 pt}{$\dfrac{9}{5}$} & \raisebox{45 pt}{$\dfrac{8}{5}$} & \raisebox{45 pt}{$\dfrac{8}{5}$} \\
    \hline
    \raisebox{45 pt}{2} & \includegraphics[scale=.3]{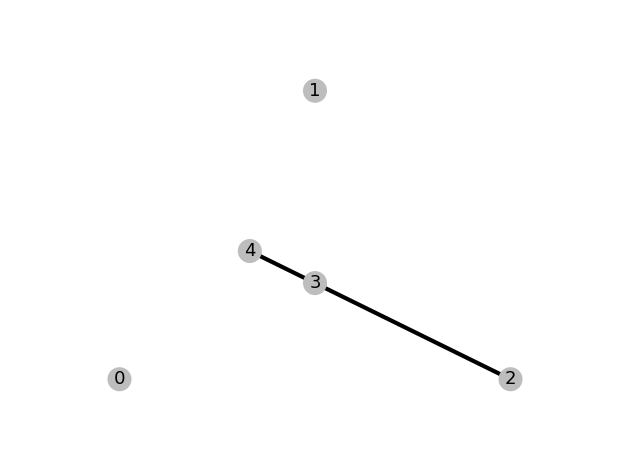} & \raisebox{45 pt}{\parbox{2 cm}{ sparse\\paving}} & \raisebox{45 pt}{$\dfrac{3}{2}$} & \raisebox{45 pt}{$\dfrac{3}{2}$} & \raisebox{45 pt}{$\dfrac{6}{5}$} & \raisebox{45 pt}{$\dfrac{3}{2}$}  \\
    \hline
    \raisebox{45 pt}{3} & \includegraphics[scale=.3]{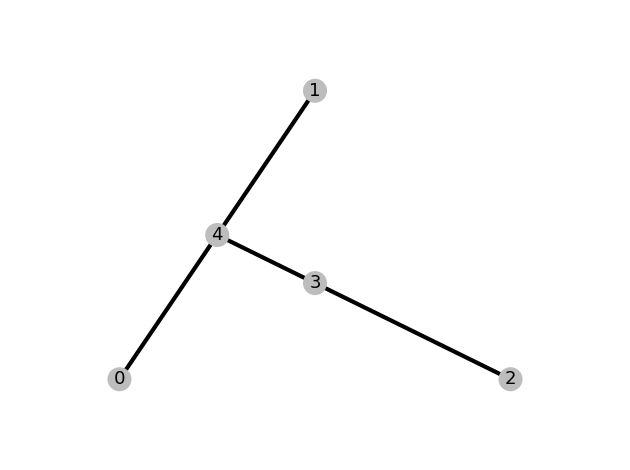} & \raisebox{45 pt}{\parbox{2 cm}{ sparse\\paving}}  & \raisebox{45 pt}{$\dfrac{3}{2}$} & \raisebox{45 pt}{$\dfrac{3}{2}$} & \raisebox{45 pt}{$\dfrac{6}{5}$} & \raisebox{45 pt}{$\dfrac{4}{3}$}   \\
    \hline
    \raisebox{45 pt}{4} & \includegraphics[scale=.3]{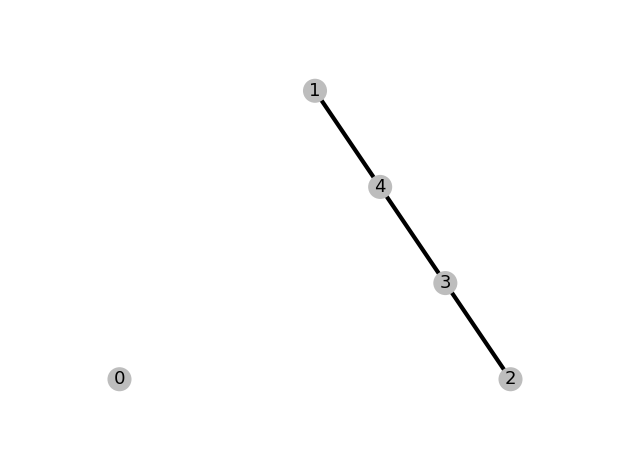} & \raisebox{45 pt}{\parbox{2 cm}{ dual not\\paving}} & \raisebox{45 pt}{$\dfrac{9}{7}$} & \raisebox{45 pt}{$\dfrac{3}{2}$} & \raisebox{45 pt}{$\dfrac{3}{2}$} & \raisebox{45 pt}{$\dfrac{3}{2}$}\\
\end{longtable}

\begin{longtable}[c]{|c| c | p{1.6 cm} | c | c | c | c |}
\caption{
Asymptotic resurgence of facet and Stanley-Reisner ideals of the nine simple matroids (up to isomorphism) of rank three on a ground set of size six.\label{tbl:Rank3groundset6}
}\\

\hline
Row \# &
Geometric representation of $M$ & Name/ Properties  
& $\frac{\alpha(I(M))}{\widehat{\alpha}(I(M))}$ & $\widehat{\rho}(I(M))$ & $\frac{\alpha(I_{\Delta(M)})}{\widehat{\alpha}(I_{\Delta(M)})}$ & $\widehat{\rho}(I_{\Delta(M)})$ \\
 \hline

\endfirsthead

\hline
 \multicolumn{7}{|c|}{Table \ref{tbl:Rank3groundset6}, continued}\\
\hline
Row \# & Geometric representation of $M$ & Name/ Properties  
& $\frac{\alpha(I(M))}{\widehat{\alpha}(I(M))}$ & $\widehat{\rho}(I(M))$ & $\frac{\alpha(I_{\Delta(M)})}{\widehat{\alpha}(I_{\Delta(M)})}$ & $\widehat{\rho}(I_{\Delta(M)})$ \\
 \hline

\endhead

\hline

\endfoot

\hline

\endlastfoot

\raisebox{45 pt}{1} & \includegraphics[scale=.3]{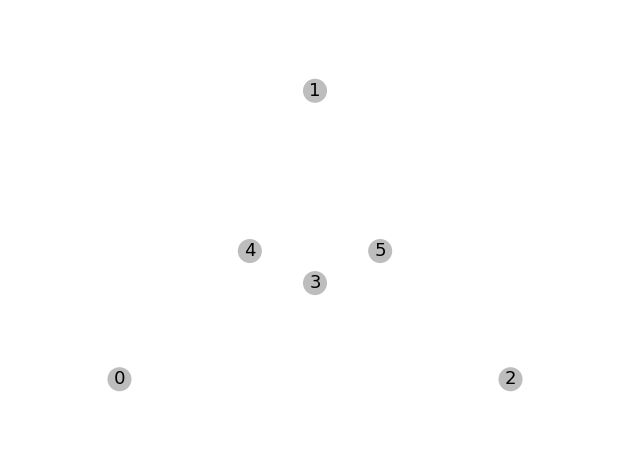} & \raisebox{45 pt}{\parbox{2 cm}{$\mathrm{U}_{3,6}$\\[10 pt] sparse\\paving}} & \raisebox{45 pt}{$2$} & \raisebox{45 pt}{$2$} & \raisebox{45 pt}{$2$} & \raisebox{45 pt}{$2$} \\
\hline
\raisebox{45 pt}{2} &\includegraphics[scale=.3]{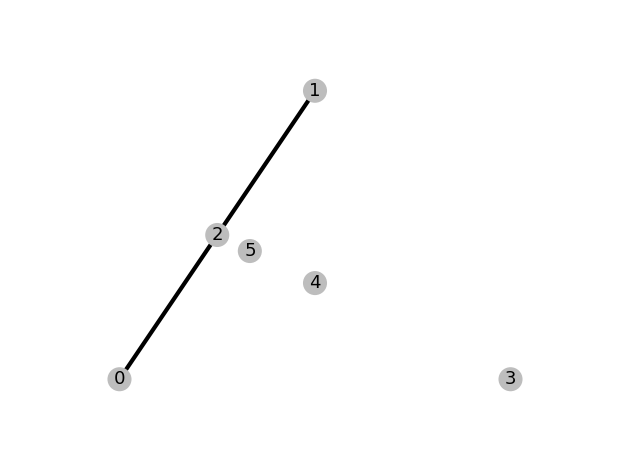} & \raisebox{45 pt}{\parbox{2 cm}{$P_6$\\[10 pt] sparse\\paving}}& \raisebox{45 pt}{$\dfrac{9}{5}$}  & \raisebox{45 pt}{$\dfrac{9}{5}$} & \raisebox{45 pt}{$\dfrac{3}{2}$} & \raisebox{45 pt}{$\dfrac{9}{5}$} \\
\hline
\raisebox{45 pt}{3} & \includegraphics[scale=.3]{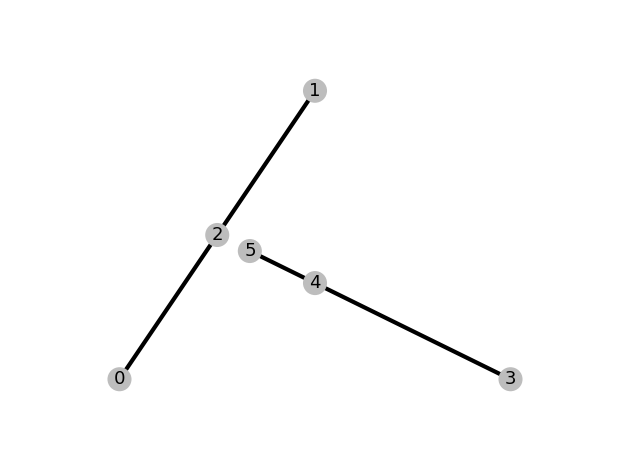} & \raisebox{45 pt}{\parbox{2 cm}{$R_6$\\[10 pt] sparse\\paving}} & \raisebox{45 pt}{$\dfrac{3}{2}$} & \raisebox{45 pt}{$\dfrac{3}{2}$} & \raisebox{45 pt}{$\dfrac{3}{2}$} & \raisebox{45 pt}{$\dfrac{3}{2}$} \\
\hline
\raisebox{45 pt}{4} & \includegraphics[scale=.3]{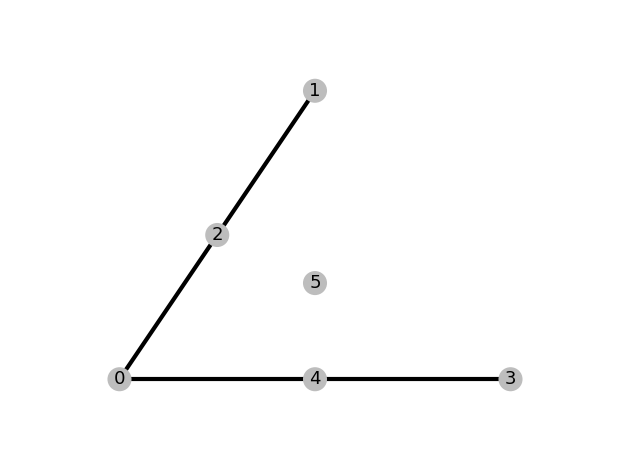} & \raisebox{45 pt}{\parbox{2 cm}{$Q_6$\\[10 pt] sparse\\ paving}} & \raisebox{45 pt}{$\dfrac{9}{5}$} & \raisebox{45 pt}{$\dfrac{9}{5}$} & \raisebox{45 pt}{$\dfrac{3}{2}$} & \raisebox{45 pt}{$\dfrac{9}{5}$} \\
\hline
\raisebox{45 pt}{5} & \includegraphics[scale=.3]{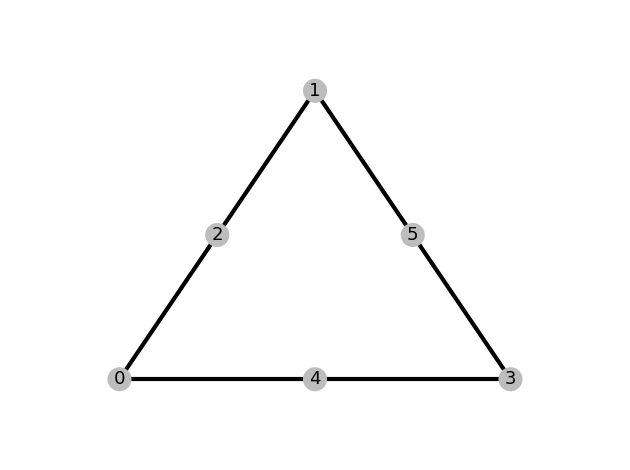} & \raisebox{45 pt}{\parbox{2 cm}{Rank $3$\\ whirl\\[10 pt] sparse\\ paving}} & \raisebox{45 pt}{$\dfrac{5}{3}$} & \raisebox{45 pt}{$\dfrac{5}{3}$} & \raisebox{45 pt}{$\dfrac{3}{2}$} & \raisebox{45 pt}{$\dfrac{5}{3}$} \\
\hline
\raisebox{45 pt}{6} & \includegraphics[scale=.3]{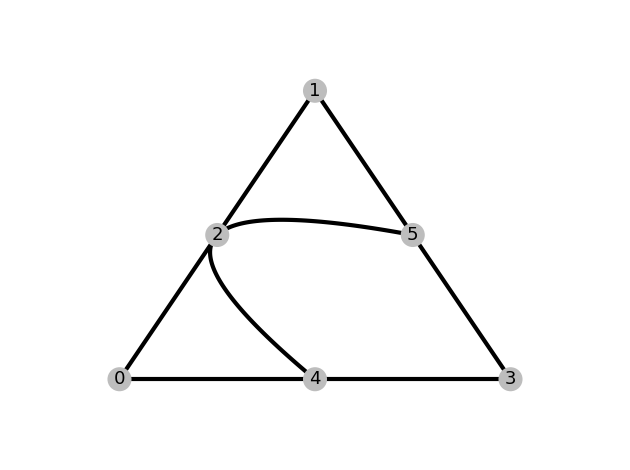} & \raisebox{45 pt}{\parbox{2 cm}{$\Theta_3\\ \mbox{or}\\ M(K_4)$\\[10 pt] sparse\\paving}} & \raisebox{45 pt}{$\dfrac{3}{2}$} & \raisebox{45 pt}{$\dfrac{3}{2}$} & \raisebox{45 pt}{$\dfrac{3}{2}$} & \raisebox{45 pt}{$\dfrac{3}{2}$} \\
\hline
\raisebox{45 pt}{7} & \includegraphics[scale=.3]{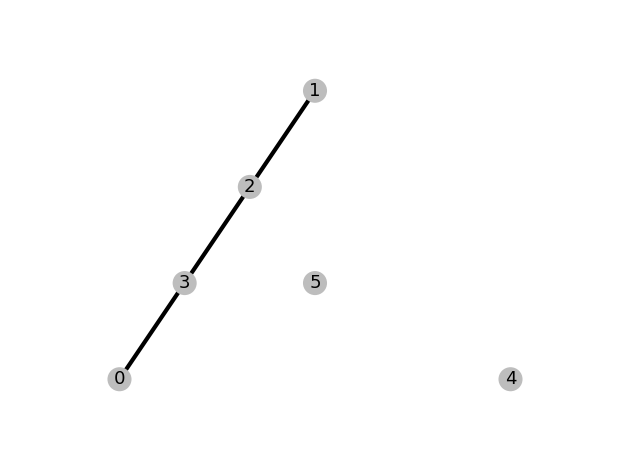} & \raisebox{45 pt}{\parbox{2 cm}{dual not \\paving}} & \raisebox{45 pt}{$\dfrac{3}{2}$} & \raisebox{45 pt}{$\dfrac{8}{5}$} & \raisebox{45 pt}{$\dfrac{3}{2}$} & \raisebox{45 pt}{$\dfrac{9}{5}$} \\
\hline
\raisebox{45 pt}{8} & \includegraphics[scale=.3]{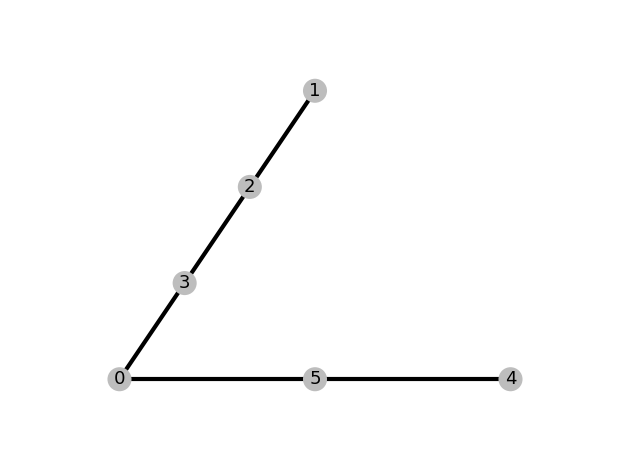} & \raisebox{45 pt}{\parbox{2 cm}{dual not\\paving}} & \raisebox{45 pt}{$\dfrac{3}{2}$} & \raisebox{45 pt}{$\dfrac{3}{2}$} & \raisebox{45 pt}{$\dfrac{3}{2}$} & \raisebox{45 pt}{$\dfrac{3}{2}$} \\
\hline
\raisebox{45 pt}{9} & \includegraphics[scale=.3]{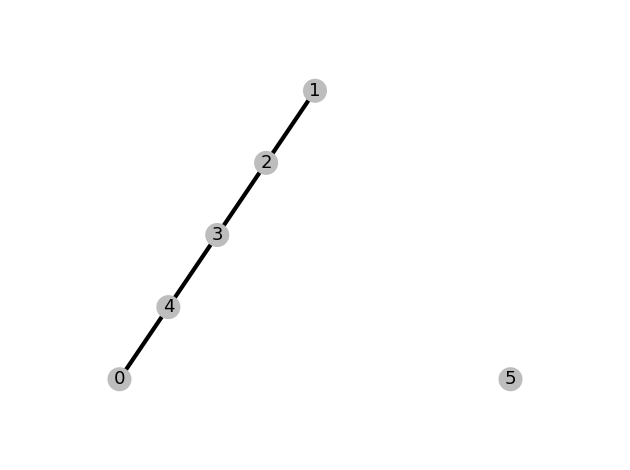} & \raisebox{45 pt}{\parbox{2 cm}{dual not\\paving}} & \raisebox{45 pt}{$\dfrac{4}{3}$} & \raisebox{45 pt}{$\dfrac{8}{5}$} & \raisebox{45 pt}{$\dfrac{9}{5}$} & \raisebox{45 pt}{$\dfrac{9}{5}$} \\
\end{longtable}

\begin{longtable}[c]{|c| p{4.7 cm} | p{1.6 cm} | c | c | c | c|}
\caption{
Asymptotic resurgence of the facet and Stanley-Reisner ideals of the twenty-three simple matroids (up to isomorphism) of rank three on a ground set of size seven.\label{tbl:Rank3groundset7}
}\\

\hline
Row \# & Geometric representation of $M$ & Name/ Properties  
& $\frac{\alpha(I(M))}{\widehat{\alpha}(I(M))}$ & $\widehat{\rho}(I(M))$ & $\frac{\alpha(I_{\Delta(M)})}{\widehat{\alpha}(I_{\Delta(M)})}$ & $\widehat{\rho}(I_{\Delta(M)})$ \\
 \hline

\endfirsthead

\hline
 \multicolumn{7}{|c|}{Table \ref{tbl:Rank3groundset7}, continued}\\
\hline
Row \# & Geometric representation of $M$ & Name/ Properties 
& $\frac{\alpha(I(M))}{\widehat{\alpha}(I(M))}$ & $\widehat{\rho}(I(M))$ & $\frac{\alpha(I_{\Delta(M)})}{\widehat{\alpha}(I_{\Delta(M)})}$ & $\widehat{\rho}(I_{\Delta(M)})$ \\
 \hline

\endhead

\hline

\endfoot

\hline

\endlastfoot

\raisebox{45 pt}{1} & \includegraphics[scale=.3]{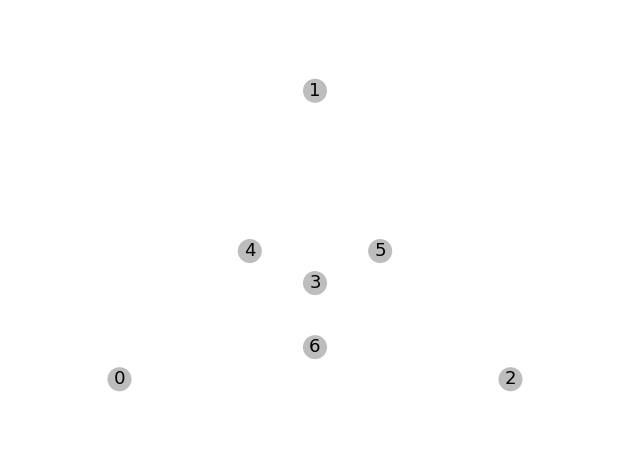} & \raisebox{45 pt}{\parbox{2cm}{$\mathrm{U}_{3,7}$\\[10 pt] sparse\\paving }} & \raisebox{45 pt}{$\dfrac{15}{7}$}  & \raisebox{45 pt}{$\dfrac{15}{7}$} & \raisebox{45 pt}{$\dfrac{16}{7}$} & \raisebox{45 pt}{$\dfrac{16}{7}$} \\
\hline
\raisebox{45 pt}{2} & \includegraphics[scale=.3]{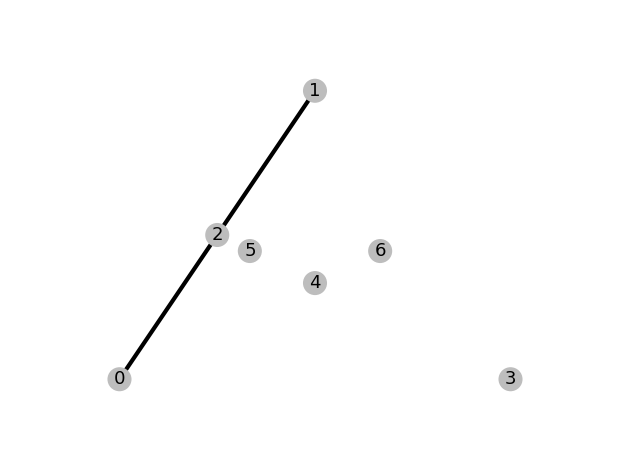} & \raisebox{45 pt}{\parbox{2 cm}{sparse\\paving}} & \raisebox{45 pt}{2} & \raisebox{45 pt}{2} & \raisebox{45 pt}{$\dfrac{12}{7}$} & \raisebox{45 pt}{2} \\
\hline
\raisebox{45 pt}{3} &
\includegraphics[scale=.3]{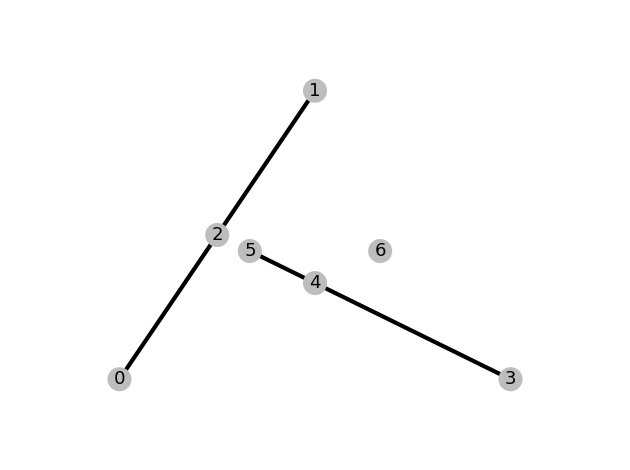} & \raisebox{45 pt}{\parbox{2 cm}{sparse\\paving}}
& \raisebox{45 pt}{$\dfrac{15}{8}$} & \raisebox{45 pt}{$\dfrac{15}{8}$} & \raisebox{45 pt}{$\dfrac{12}{7}$} & \raisebox{45 pt}{2} \\
\hline
\raisebox{45 pt}{4} &
\includegraphics[scale=.3]{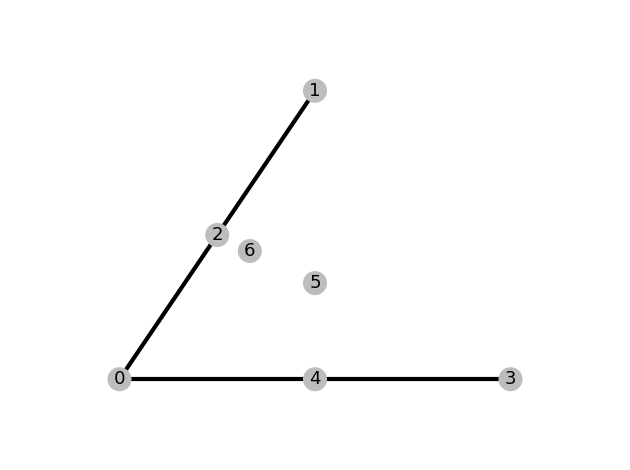} & \raisebox{45 pt}{\parbox{2 cm}{sparse\\paving}} & \raisebox{45 pt}{$2$} & \raisebox{45 pt}{$2$} & \raisebox{45 pt}{$\dfrac{12}{7}$} & \raisebox{45 pt}{$2$} \\
\hline
\raisebox{45 pt}{5} &
\includegraphics[scale=.3]{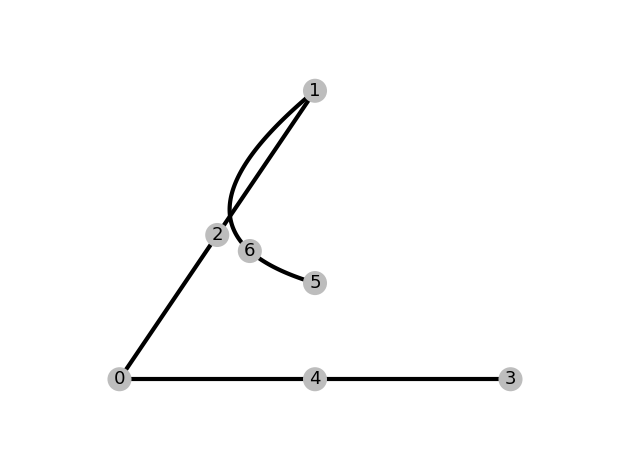} & \raisebox{45 pt}{\parbox{2 cm}{sparse\\paving}} & \raisebox{45 pt}{$\dfrac{24}{13}$} & \raisebox{45 pt}{$\dfrac{24}{13}$} & \raisebox{45 pt}{$\dfrac{12}{7}$} & \raisebox{45 pt}{$\dfrac{28}{15}$} \\
\hline
\raisebox{45 pt}{6} &
\includegraphics[scale=.3]{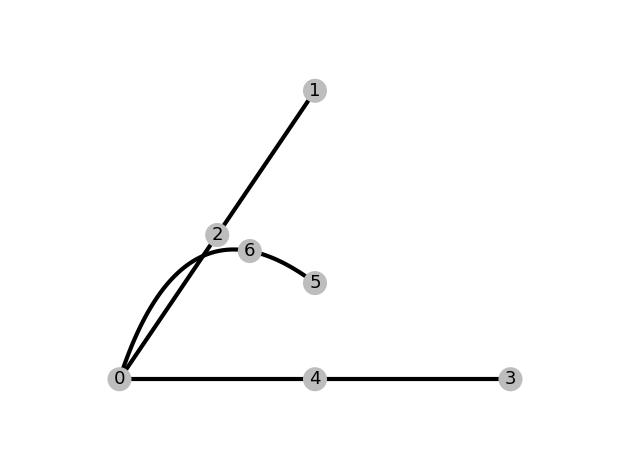} & \raisebox{45 pt}{\parbox{2 cm}{Rank-$3$ \\ free spike\\[10 pt] sparse\\ paving}} & \raisebox{45 pt}{$2$} & \raisebox{45 pt}{$2$} & \raisebox{45 pt}{$\dfrac{12}{7}$} & \raisebox{45 pt}{$2$} \\
\hline
\raisebox{45 pt}{7} &
\includegraphics[scale=.3]{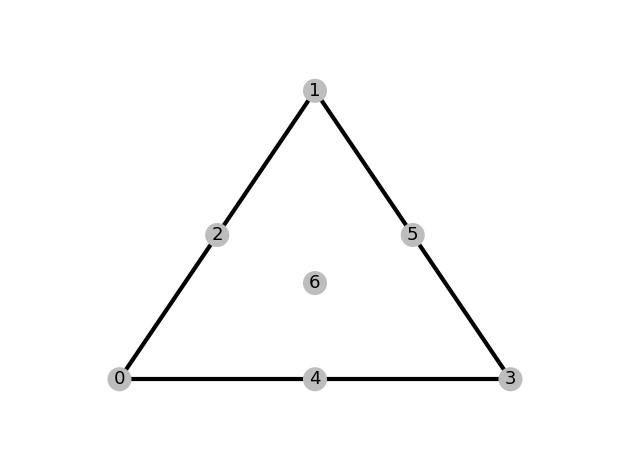} &\raisebox{45 pt}{\parbox{2 cm}{sparse\\paving}} & \raisebox{45 pt}{$\dfrac{28}{11}$} & \raisebox{45 pt}{$\dfrac{28}{11}$} &  \raisebox{45 pt}{$\dfrac{12}{7}$} & \raisebox{45 pt}{$2$} \\
\hline
\raisebox{45 pt}{8} &
\includegraphics[scale=.3]{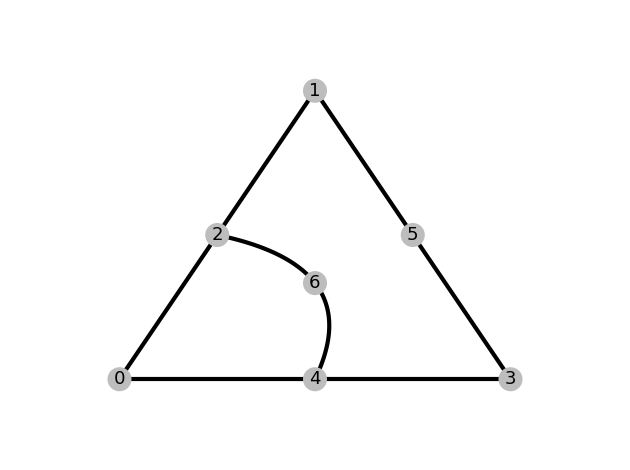} & \raisebox{45 pt}{\parbox{2 cm}{sparse\\paving}} & \raisebox{45 pt}{$\dfrac{9}{5}$} & \raisebox{45 pt}{$\dfrac{9}{5}$} & \raisebox{45 pt}{$\dfrac{12}{7}$} & \raisebox{45 pt}{$\dfrac{20}{11}$} \\
\hline
\raisebox{45 pt}{9} &
\includegraphics[scale=.3]{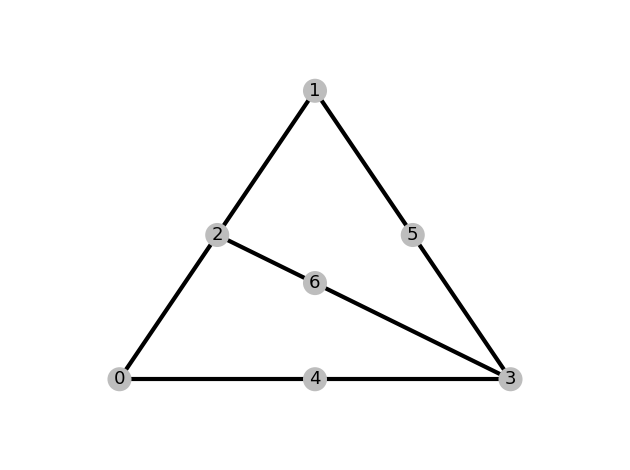} & \raisebox{45 pt}{\parbox{2 cm}{sparse\\paving}} & \raisebox{45 pt}{$\dfrac{15}{8}$} & \raisebox{45 pt}{$\dfrac{15}{8}$} & \raisebox{45 pt}{$\dfrac{12}{7}$} & \raisebox{45 pt}{$\dfrac{36}{19}$} \\
\hline
\raisebox{45 pt}{10} &
\includegraphics[scale=.3]{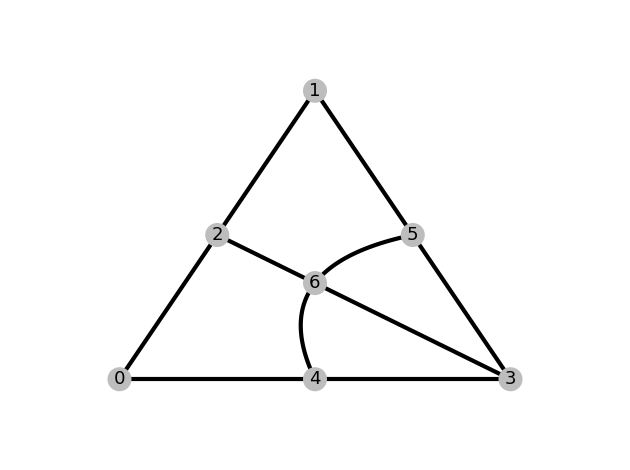} & \raisebox{45 pt}{\parbox{2cm}{$P_7$\\[10 pt] sparse\\paving}} & \raisebox{45 pt}{$\dfrac{12}{7}$} & \raisebox{45 pt}{$\dfrac{12}{7}$} & \raisebox{45 pt}{$\dfrac{12}{7}$} & \raisebox{45 pt}{$\dfrac{12}{7}$}\\
\hline
\raisebox{45 pt}{11} &
\includegraphics[scale=.3]{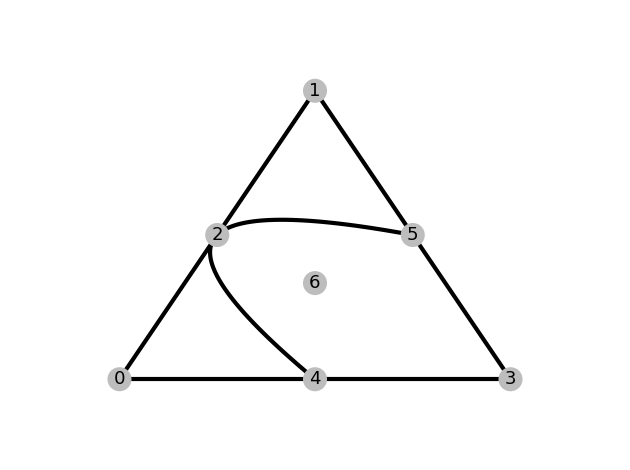} & \raisebox{45 pt}{\parbox{2 cm}{sparse\\paving}} & \raisebox{45 pt}{$\dfrac{15}{8}$} & \raisebox{45 pt}{$\dfrac{15}{8}$} & \raisebox{45 pt}{$\dfrac{12}{7}$} & \raisebox{45 pt}{$2$} \\
\hline
\raisebox{45 pt}{12} &
\includegraphics[scale=.3]{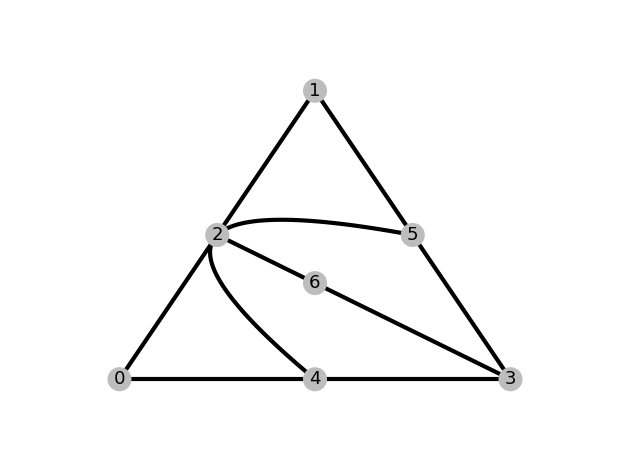} & \raisebox{45 pt}{\parbox{2 cm}{sparse\\paving}} & \raisebox{45 pt}{$\dfrac{24}{13}$} & \raisebox{45 pt}{$\dfrac{24}{13}$} & \raisebox{45 pt}{$\dfrac{12}{7}$} & \raisebox{45 pt}{$\dfrac{28}{15}$}  \\
\hline
\raisebox{45 pt}{13} &
\includegraphics[scale=.3]{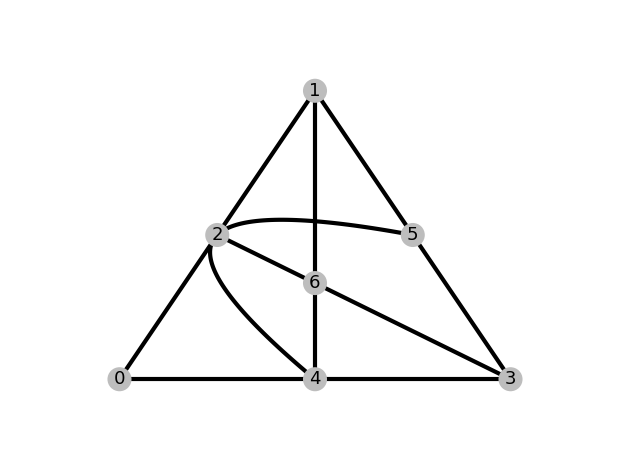} & \raisebox{45 pt}{\parbox{2 cm}{$F_7^-$ \\non-Fano matroid\\[10 pt] sparse\\ paving}} & \raisebox{45 pt}{$\dfrac{9}{5}$} & \raisebox{45 pt}{$\dfrac{9}{5}$} & \raisebox{45 pt}{$\dfrac{12}{7}$} & \raisebox{45 pt}{$\dfrac{20}{11}$} \\
\hline
\raisebox{45 pt}{14} &
\includegraphics[scale=.3]{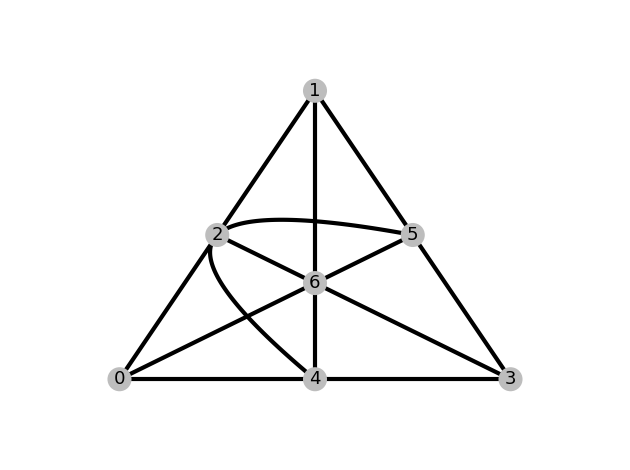} & \raisebox{45 pt}{\parbox{2 cm}{$F_7$ \\ Fano \\ matroid\\[10 pt] sparse\\paving}} & \raisebox{45 pt}{$\dfrac{12}{7}$} & \raisebox{45 pt}{$\dfrac{12}{7}$} & \raisebox{45 pt}{$\dfrac{12}{7}$} & \raisebox{45 pt}{$\dfrac{12}{7}$} \\
\hline
\raisebox{45 pt}{15} &
\includegraphics[scale=.3]{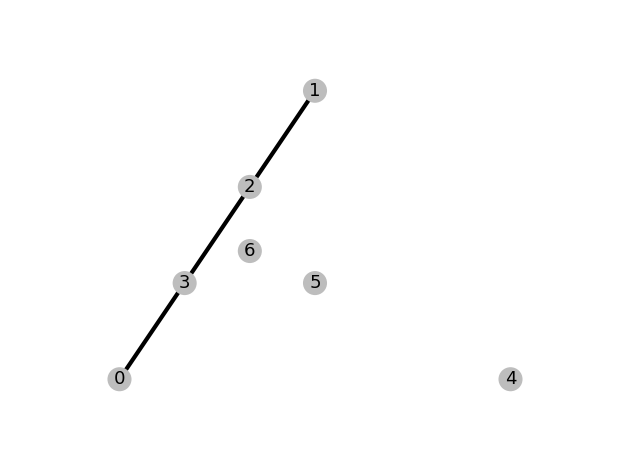} & \raisebox{45 pt}{\parbox{2 cm}{dual not \\ paving}} & \raisebox{45 pt}{$\dfrac{9}{5}$} & \raisebox{45 pt}{$\dfrac{9}{5}$} & \raisebox{45 pt}{$\dfrac{12}{7}$} & \raisebox{45 pt}{$2$}\\
\hline
\raisebox{45 pt}{16}  &
\includegraphics[scale=.3]{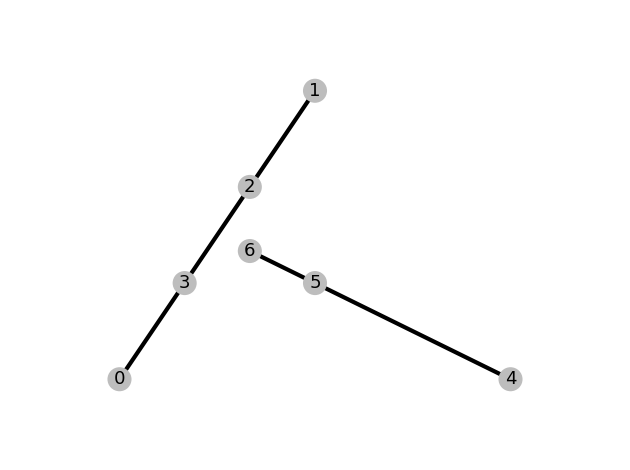} & \raisebox{45 pt}{\parbox{2 cm}{dual not \\ paving}} & \raisebox{45 pt}{$\dfrac{3}{2}$} & \raisebox{45 pt}{$\dfrac{8}{5}$} & \raisebox{45 pt}{$\dfrac{12}{7}$} & \raisebox{45 pt}{$\dfrac{9}{5}$} \\
\hline
\raisebox{45 pt}{17} &
\includegraphics[scale=.3]{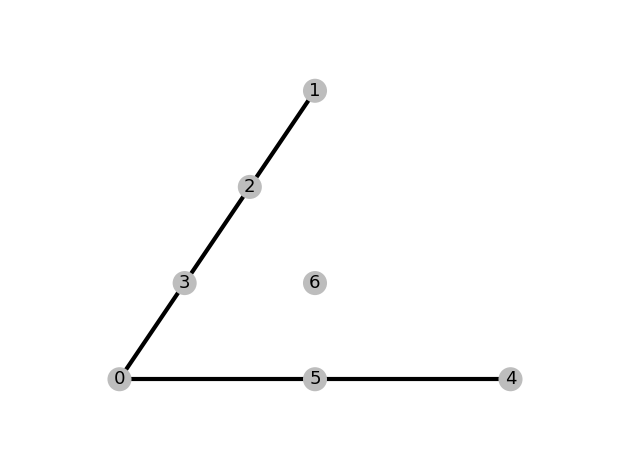} & \raisebox{45 pt}{\parbox{2 cm}{dual not \\ paving}} & \raisebox{45 pt}{$\dfrac{9}{5}$} & \raisebox{45 pt}{$\dfrac{9}{5}$} & \raisebox{45 pt}{$\dfrac{12}{7}$} & \raisebox{45 pt}{$2$} \\
\hline
\raisebox{45 pt}{18} &
\includegraphics[scale=.3]{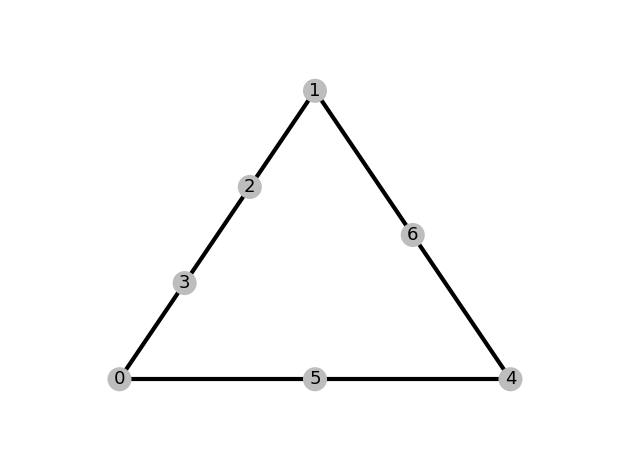} & 
\raisebox{45 pt}{\parbox{2 cm}{dual not \\ paving}} & \raisebox{45 pt}{$\dfrac{9}{5}$} & \raisebox{45 pt}{$\dfrac{9}{5}$} & \raisebox{45 pt}{$\dfrac{12}{7}$} & \raisebox{45 pt}{$\dfrac{24}{13}$} \\
\hline
\raisebox{45 pt}{19} &
\includegraphics[scale=.3]{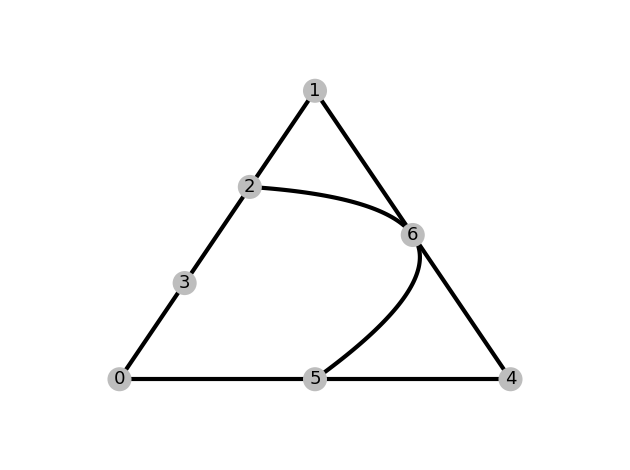} & 
\raisebox{45 pt}{\parbox{2 cm}{$O_7$ \\ dual not \\ paving}}
& \raisebox{45 pt}{$\dfrac{27}{16}$} & \raisebox{45 pt}{$\dfrac{27}{16}$} & \raisebox{45 pt}{$\dfrac{12}{7}$} & \raisebox{45 pt}{$\dfrac{7}{4}$} \\
\hline
\raisebox{45 pt}{20} &
\includegraphics[scale=.3]{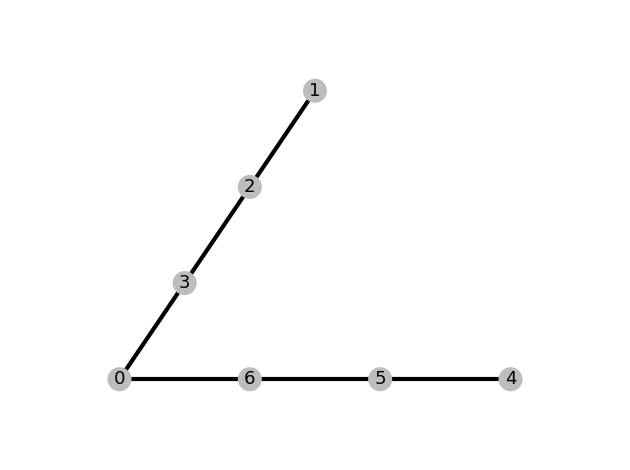} & \raisebox{45 pt}{\parbox{2 cm}{dual not \\ paving}} & \raisebox{45 pt}{$\dfrac{3}{2}$} & \raisebox{45 pt}{$\dfrac{3}{2}$} & \raisebox{45 pt}{$\dfrac{12}{7}$} & \raisebox{45 pt}{$\dfrac{12}{7}$}\\
\hline
\raisebox{45 pt}{21} &
\includegraphics[scale=.3]{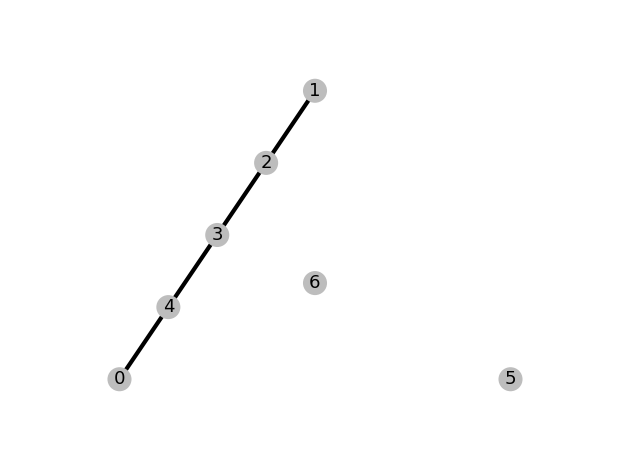} & \raisebox{45 pt}{\parbox{2 cm}{dual not \\ paving}} & \raisebox{45 pt}{$\dfrac{3}{2}$} & \raisebox{45 pt}{$\dfrac{5}{3}$} & \raisebox{45 pt}{$\dfrac{9}{5}$} & \raisebox{45 pt}{$2$} \\
\hline
\raisebox{45 pt}{22} &
\includegraphics[scale=.3]{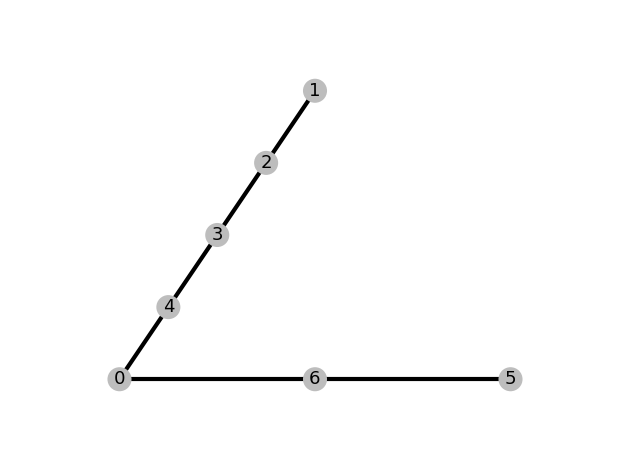} & \raisebox{45 pt}{\parbox{2 cm}{dual not \\ paving}} & \raisebox{45 pt}{$\dfrac{3}{2}$} & \raisebox{45 pt}{$\dfrac{8}{5}$} & \raisebox{45 pt}{$\dfrac{9}{5}$} & \raisebox{45 pt}{$\dfrac{9}{5}$} \\
\hline
\raisebox{45 pt}{23} &
\includegraphics[scale=.3]{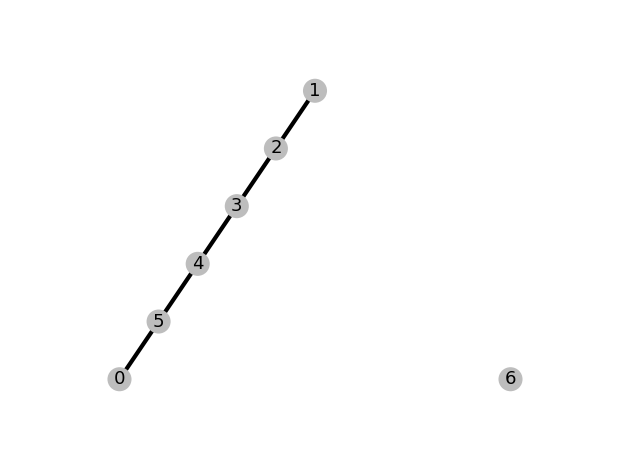} & \raisebox{45 pt}{\parbox{2 cm}{dual not \\ paving}} & \raisebox{45 pt}{$\dfrac{15}{11}$} & \raisebox{45 pt}{$\dfrac{5}{3}$} & \raisebox{45 pt}{$2$} & \raisebox{45 pt}{$2$}\\
\end{longtable}

\end{document}